\documentclass[11pt, reqno]{amsart}

\usepackage{amssymb,amsmath,amsthm,enumerate,latexsym, graphics,shapepar,nccmath,mathtools}
\usepackage{tikz-cd}
\usepackage{libertinus}
\usepackage[utf8]{inputenc}
\usepackage[pagebackref,colorlinks=true, pdfstartview=FitV, linkcolor=red, citecolor=blue, urlcolor=blue]{hyperref}
\usepackage{cleveref}
\usepackage[margin=0.9in]{geometry}

\usepackage{parskip}
\usepackage{slashbox}
\newtheorem{thm}{Theorem}[section]
\newtheorem{lemma}[thm]{Lemma}
\newtheorem{cor}[thm]{Corollary}
\newtheorem{prop}[thm]{Proposition}
\theoremstyle{definition}

\newtheorem{conjecture}[thm]{Conjecture}

\newtheorem{defn}[thm]{Definition}
\newtheorem{example}[thm]{Example}
\newtheorem{ex-notn}[thm]{Example/Notation}

\newtheorem{question}[thm]{Question}
\newtheorem{rmk}[thm]{Remark}

\newcommand{\sk}{{\ensuremath{\sf k }}}
\newcommand{\m}{\ensuremath{\mathfrak m}}
\newcommand{\BN}{\ensuremath{\mathbb{N}}}
\newcommand{\BZ}{\ensuremath{\mathbb{Z}}}
\newcommand{\BQ}{\ensuremath{\mathbb{Q}}}
\newcommand{\BB}{\ensuremath{\mathbb{B}}}
\newcommand{\mP}{\ensuremath{\mathcal{P}}}
\newcommand{\G}{\ensuremath{\mathbb{G_\bullet}}}
\newcommand{\F}{\ensuremath{\mathbb{F_\bullet}}}

\newcommand{\lrar}{\ensuremath{\longrightarrow}}
\newcommand{\rar}{\ensuremath{\rightarrow}}
\newcommand{\ov}{\ensuremath{\overline}}

\DeclareMathOperator{\pdim}{pdim}
\DeclareMathOperator{\cmd}{cmd}
\DeclareMathOperator{\ann}{ann}
\DeclareMathOperator{\tor}{Tor}
\DeclareMathOperator{\soc}{soc}

\DeclareMathOperator{\edim}{edim}

\DeclareMathOperator{\reg}{reg}
\DeclareMathOperator{\depth}{depth}
\DeclareMathOperator{\codim}{codim}

\newcommand{\curv}{\mbox{\rm curv}}

\begin{document}
\author[H. Ananthnarayan]{H. Ananthnarayan}
\address{Department of Mathematics, I.I.T. Bombay, Powai, Mumbai 400076.}
\email{ananth@math.iitb.ac.in}

\author{Omkar Javadekar}
\address{Chennai Mathematical Institute, Siruseri, Tamilnadu 603103. India}
\email{omkarj@cmi.ac.in}

\author[Rajiv Kumar]{Rajiv Kumar}
\address{Department of Mathematics, Indian Institute of Technology Jammu, J\&K, India - 181221.}
\email{rajiv.kumar@iitjammu.ac.in}

\title{Purity of Extremal Rays of Betti Cones}

\subjclass{13A02, 13D02, 13D40}

\keywords{Betti cone, Pure resolution, Koszul algebra, Minimal multiplicity, Boij--S\"oderberg theory}
\begin{abstract}
Let $R$ be a standard graded algebra over an infinite field $\mathsf k$, and let 
$\mathbb{B}_{\mathbb{Q}}(R)$ and $\mathbb{B}_{\mathbb{Q}}^{\mathrm{pure}}(R)$ denote the rational cones spanned by the Betti tables of all finitely generated $R$-modules and of those with pure resolutions, respectively. We establish several necessary conditions for the equality $\mathbb{B}_{\mathbb{Q}}(R) = \mathbb{B}_{\mathbb{Q}}^{\mathrm{pure}}(R)$. 
When $\operatorname{edim}(R)\ge 2$, we prove that $\mathsf k$ has a pure resolution if and only if it has a linear resolution, and consequently, if the extremal rays of $\mathbb{B}_{\mathbb{Q}}(R)$ are pure, then $R$ is Koszul and good (in the sense of Roos). We show that if $R$ has depth zero, it must be Artinian for the equality of the two cones to hold. For rings with linear pairs of exact 
zerodivisors, we show that the equality of the cones implies that the $h$-polynomial has degree at most $2$, and use it to characterize generic Gorenstein Artin algebras satisfying $\mathbb{B}_{\mathbb{Q}}(R) = \mathbb{B}_{\mathbb{Q}}^{\mathrm{pure}}(R)$. 
We also characterize algebras whose extremal rays are exactly the Betti tables of shifts of $R/\mathfrak m^j$ and of pure modules $M$ with $\operatorname{codim}(M)=\operatorname{pdim}(M)$: apart from polynomial rings,  these are precisely Cohen--Macaulay algebras of dimension at most one with minimal multiplicity. In addition, we obtain a characterization of Cohen--Macaulay algebras of minimal multiplicity in terms of the extremal rays of the Betti cone of maximal Cohen--Macaulay modules.
\end{abstract}
\maketitle

\section{Introduction}
Let $R$ be a standard graded algebra over an infinite field $\mathsf{k}$. 
The central object of study in Boij--S\"oderberg theory is the rational Betti cone $\mathbb{B}_{\mathbb{Q}}(R)$, defined as the cone generated by all nonnegative rational linear combinations of Betti tables of finitely generated graded $R$-modules. Understanding elements of $\mathbb{B}_{\mathbb{Q}}(R)$ is therefore equivalent to understanding its fundamental building blocks, namely the extremal rays.  

Motivated by the \emph{multiplicity conjecture} of Herzog--Huneke--Srinivasan (see~\cite{HS98}), Boij and S\"oderberg conjectured in~\cite{BS08} that for $R = \mathsf{k}[X_1,\ldots,X_n]$, the extremal rays of $\mathbb{B}_{\mathbb{Q}}(R)$ are spanned by the Betti tables of Cohen--Macaulay modules with pure resolutions. 
They supported this conjecture by proving it in the case $n=2$, and, assuming its validity in general, derived a proof of the multiplicity conjecture.
Subsequently, Eisenbud and Schreyer proved the Boij--S\"oderberg conjectures in~\cite{ES09}. In addition, they provided an algorithm for decomposing an arbitrary Betti table in $\mathbb{B}_{\mathbb{Q}}(R)$ into a finite nonnegative rational combination of extremal Betti tables. Thus, over a polynomial ring, the Betti cone $\mathbb B_{\mathbb Q}(R)$ coincides with its  subcone $\mathbb B_{\mathbb Q}^{\rm{pure}}(R)$, generated by all pure Betti tables. 
While a complete description of the Betti cone $\mathbb{B}_{\mathbb{Q}}(R)$ is known when $R$ is a polynomial ring, very little is understood in the general setting of standard graded $\mathsf{k}$-algebras. In particular, characterizing those algebras for which the equality $\mathbb{B}_{\mathbb{Q}}(R)=\mathbb{B}_{\mathbb{Q}}^{\mathrm{pure}}(R)$ holds is an extremely difficult problem. Our article aims to establish several necessary conditions for the equality $\mathbb{B}_{\mathbb{Q}}(R)=\mathbb{B}_{\mathbb{Q}}^{\mathrm{pure}}(R)$ to hold, and provides general guiding questions and conjectures for this equality.

The equality of the pure and Betti cones has been established only in a handful of cases among standard graded $\mathsf{k}$-algebras that are not polynomial rings: for quadric hypersurfaces by Berkesch--Burke--Erman--Gibbons~\cite{BBEG12}, for short Gorenstein rings of embedding dimension two by Gibbons~\cite{Gi13}, and for Cohen--Macaulay $\mathsf{k}$-algebras of dimension at most one with minimal multiplicity by Ananthnarayan--Kumar \cite{AK20}. More recently, Iyengar--Ma--Walker~\cite{IMW23} extended the results for polynomial rings to modules of finite projective dimension over all finitely generated graded rings admitting linear Noether normalizations. In particular, they showed that if $R$ is a Cohen--Macaulay standard graded $\mathsf{k}$-algebra, then the Betti cone generated by modules of finite projective dimension coincides with the corresponding pure Betti cone. Nevertheless, a complete description of $\mathbb{B}_{\mathbb{Q}}(R)$ remains widely open in general, since for non-polynomial rings there exist modules of infinite projective dimension whose Betti tables are in $\mathbb B_{\mathbb Q}(R)$.

Due to the work of Ananthnarayan--Kumar \cite{AK20} it is known that given any standard graded algebra $R$ with the homogeneous maximal ideal $\m$, the Betti tables of shifts of the modules of the form (a) $R/\m^j$, where $j \geq 0$, and (b) the modules $M$ with pure resolution satisfying $\codim(M)=\pdim_R(M)$ always span an extremal ray in $\mathbb B_{\mathbb Q}(R)$. In fact, as noted earlier, these are the only modules that give rise to extremal Betti tables when $R$ is a polynomial ring. In a way, (a) and (b) are form the \emph{smallest} possible set of extremal rays. One of the main results of this article characterizes all standard graded $\mathsf k$-algebras $R$ for which (a) and (b) give a complete set of extremal rays of $\mathbb B_{\mathbb Q}(R)$. We show that such rings are precisely Cohen--Macaulay of dimension at most one with minimal multiplicity or are polynomial rings (see Theorem~\ref{thm:dim-at-most-1-for-smallest-set-of-extremals}). This result was established by Ananthnarayan--Javadekar--Kumar \cite{AJK26} for the class of fibre products of dimension $\geq 1$. In addition, we provide a complete description of the Betti cone of the maximal Cohen--Macaulay modules over Cohen--Macaulay algebras of any dimension with minimal multiplicity (see Proposition~\ref{MCM_Betti_Cone}), and further show that this description characterizes such algebras (see Theorem~\ref{thm:min-mult-char}). 

Although some attempts have been made to establish the equality $\mathbb{B}_{\mathbb{Q}}(R)=\mathbb{B}_{\mathbb{Q}}^{\mathrm{pure}}(R)$ for specific classes of rings, as discussed above, general necessary and sufficient conditions for this equality remain largely unknown. A major portion of this article is devoted to deriving several necessary conditions. We begin by proving a notable result that a minimal free resolution of the residue field over a standard graded $\mathsf k$-algebra $R$ with $\edim(R)\geq 2$ is pure if and only if it is linear, i.e., if and only if the algebra is Koszul (see Theorem~\ref{Koszul}). We use this to obtain the first necessary condition for the equality $\mathbb{B}_{\mathbb{Q}}(R)=\mathbb{B}_{\mathbb{Q}}^{\mathrm{pure}}(R)$, and show that $R$ must be Koszul for this equality to hold. In fact, we prove Koszulness under the weaker hypothesis that the extremal rays of $\mathbb{B}_{\mathbb{Q}}(R)$ are pure, rather than assuming the full equality above (see Theorem~\ref{thm:purityImpliesKoszul}).
Using this, we further show that if the two cones coincide, then all finitely generated $R$-modules must have rational Poincar\'e series with a common denominator, i.e., $R$ must be \emph{good} in the sense of Roos (see Theorem~\ref{thm:good}). It is worth noting that the classes of rings for which the equality $\mathbb{B}_{\mathbb{Q}}(R)=\mathbb{B}_{\mathbb{Q}}^{\mathrm{pure}}(R)$ is known to hold are all Golod over a complete intersection, a property which is stronger that being good in the sense of Roos. However, there are no known examples of good Koszul algebras that are not Golod over complete intersections. 

After proving the above necessary conditions, we focus on some special classes of $\sk$-algebras and study the equality of the pure and Betti cones for them. We prove that if the equality holds for an algebra $R$ with $\depth(R)=0$, then $R$ must be an Artinian and level algebra (see Theorem~\ref{thm:depthZeroImpliesArtinianLevel}). This result can be seen as the evidence for the conjecture that if $\mathbb{B}_{\mathbb{Q}}(R)=\mathbb{B}_{\mathbb{Q}}^{\mathrm{pure}}(R)$, then $R$ is Cohen--Macaulay level algebra. We next look at algebras with linear pair of exact zerodivisors. We show that the equality of the two cones over such rings forces the $h$-polynomial of $R$ to have degree at most $2$ (see Theorem~\ref{notSufficient}). We use this result to characterize generic Gorenstein Artin algebras satisfying $\mathbb{B}_{\mathbb{Q}}(R)=\mathbb{B}_{\mathbb{Q}}^{\mathrm{pure}}(R)$ (see Theorem~\ref{thm:generic-Gorenstein}).

This article is structured as follows. In Section \ref{sec:prelim}, we collect definitions, set-up the notation, and recall known results relevant for the rest of the article. Section \ref{sec:purity-general} is devoted to the study of purity of the resolution of $\mathsf k$ and its applications to the equality $\mathbb{B}_{\mathbb{Q}}(R)=\mathbb{B}_{\mathbb{Q}}^{\mathrm{pure}}(R)$. Section \ref{sec:purity-special} contains the analysis of this equality for the depth zero algebras and algebras with linear pairs of exact zerodivisors. It also characterizes generic Gorenstein algebras for which the two cones are equal. In Section \ref{sec:purity-and-minimal-multiplicity}, we characterize all the rings with the \emph{smallest} possible set of extremal rays by showing that they are either polynomial rings or are Cohen--Macaulay of minimal multiplicity with dimension at most one. Section \ref{special-Hilbert-series} gives a complete description of the Betti cone of maximal Cohen--Macaulay modules over Cohen--Macaulay algebras of minimal multiplicity, and provides a characterization of such algebras in terms of the description obtained. Finally, in Section \ref{sec:concluding-remarks}, we make concluding remarks and state a few open questions and conjectures. 

\section{Preliminaries}  \label{sec:prelim}

Throughout this article, we assume that $\sk$ is an infinite field, and $R$ is a standard graded Noetherian $\sk$-algebra. Unless stated otherwise, all modules considered in this article are graded and finitely generated. 

\subsection{Graded Free Resolutions}
 Let $M$ be a finitely generated graded $R$-module and 
\[\mathbb F_{\bullet} : \cdots \xrightarrow[]{} F_n \xrightarrow[]{\phi_n} F_{n-1} \xrightarrow[]{\phi_{n-1}} \cdots \xrightarrow[]{\phi_1}F_0 \xrightarrow[]{\phi_0} M \xrightarrow[]{} 0\]
be a graded minimal free resolution of $M$ over $R$, i.e., for each $i$, the map $\phi_i$ is  graded  of degree zero, and $\ker(\phi_i) \subset \m F_i$. 

\begin{enumerate}[{\rm (a)}]

\item The module $\Omega_i^R(M)=\ker(\phi_{i-1})$ is a graded $R$-module, called the \emph{$i^{th}$ syzygy module} of $M$. We define $\beta_i^R(M)= \dim_{\mathsf k}\left(\tor^R_i(M, \sk)\right)$, called the $i^{th}$ Betti number of $M$. Similarly, we define $\beta_{i,j}^R(M)= \dim_{\mathsf k}\left(\tor^R_i(M, \sk)_j\right)$, called the $(i,j)^{th}$ graded Betti number of $M$. Note that $\beta_i^R(M)$ is the number of elements in a minimal generating set of $\Omega_i^R(M)$, and $\beta_{i,j}^R(M)$ is the number of elements of degree $j$ in a minimal generating set of $\Omega_i^R(M)$. 

\item Let $\beta_{i,j}\coloneqq\beta_{i,j}^R(M)$. Then the \emph{Betti table} of $M$ is written as 

\begin{center}
\begin{tabular}{|c|c|}
\hline
\backslashbox{$j$}{$i$} & $\cdots \quad i \quad \cdots$ \\
\hline
$\vdots$ & $\vdots$ \\
$j$ & $\cdots \quad \beta_{i,\,i+j} \quad \cdots$ \\
$\vdots$ & $\vdots$ \\
\hline
\end{tabular}
\end{center}

Observe that the $(i,j)^{th} $ entry in $\beta^R(M)$ is $\beta_{i,i+j}$.
\item The series $\mathcal{P}^R_M(z)= \sum_{i\geq 0} \beta_i^R(M)z^i$ is called the \emph{Poincar\'e series of $M$}. When there is no scope for confusion, we denote $\mathcal P_M^R(z)$ by $\mathcal P_M(z)$.

\item 
The \emph{Castelnuovo--Mumford regularity of $M$} is defined as
\[ \reg_R(M) = \sup \{ j-i \mid \beta_{i,j}^R(M) \neq 0 \text{ for some } i\geq0 \text{ and }  j \in \mathbb Z\}. \]
When there is no scope for confusion, we denote $\reg_R(M)$ by $\reg(M)$. 

\item A module $M$ generated in degree $d$ is said to have a \emph{linear resolution} if $\beta_{i,j}^R(M)\neq 0$ implies $j=d+i$. In other words, all nonzero entries of $\beta^R(M)$ occur in a single row.

\item We say that $R$ is \emph{Koszul} if $\sk$ has a linear resolution over $R$.

\item A module $M$ is said to have a \emph{pure resolution} if given any $i \geq 0$,  $\beta^R_{i,j}(M) \neq 0$ for at most one $j$. In other words,  every nonzero column of $\beta^R(M)$ contains exactly one nonzero entry. In this article, by a \emph{pure module}, we mean a module having a pure resolution. 

\item Let $M$ be a pure $R$-module. We say that $M$ is of \emph{type}  
\begin{enumerate}[{\rm i)}]
\item ${{\bf d}}=(\delta_0, \delta_1,\delta_2,\ldots)$ if $\pdim_R(M)= \infty$ and $\beta_{i,\delta_i}^R(M) \neq 0$ for all $i \geq 0$. 

\item ${\bf d}= (\delta_0, \delta_1, \ldots, \delta_p, \infty, \infty,\ldots)$ if $\pdim_R(M)=p$ and $\beta_{i,\delta_i}^R(M) \neq 0$ for $0 \leq i \leq p$.
\end{enumerate}

\end{enumerate}

\begin{rmk}\label{rmk:Hilbert-series-etc}
Let $R$ be a standard graded $\sk$-algebra, and $M=\bigoplus_{i\in \mathbb Z} M_i$ be a finitely generated $R$-module.
\begin{enumerate}[{\rm (a)}]
    \item The \emph{Hilbert series of $M$} is defined as $H_M(z)=\sum_{i \in \mathbb Z} \dim_{\mathsf k}(M_i) z^i$.
    \item It is well-known that if $M\neq 0$, then there exists $f(z)\in \mathbb Z[z,z^{-1}]$ such that $H_M(z)=f(z)/(1-z)^d$, where $f(1) \in \mathbb N$ and $d = \dim(M)$ (see \cite[Section 4.1]{BH93}). When $M=R$, the number $f(1)$ is called the \emph{multiplicity} of $R$, and we denote it by $e(R)$.

    \item Let $M$ have a linear resolution. Then, by the additivity of the Hilbert series on exact sequences, we have $H_M(z)=\sum\limits_i (-1)^i H_R(z)\beta_{i,i}^R(M)z^i$. Thus, we see that $$H_M(z)=H_R(z)\sum\limits_i\beta_{i,i}^R(M)z^i=H_R(z)\mathcal P_M^R(-z).$$
\end{enumerate}
\end{rmk}

\subsection{Betti Cones and Extremal Rays} 

Let $R$ be a standard graded $\sk$-algebra.
\begin{enumerate}[a)]
\item Then the \emph{Betti cone of finitely generated $R$-modules} is defined as
\[ \mathbb{B}_{\mathbb{Q}}(R) = \{ c_1 \beta^R(M_1) + \cdots +c_n \beta^R(M_n) \mid c_i \in \mathbb{Q}_{\geq 0}, M_i {\text{\ is a finitely generated\ }} R{\text{-module}}\}. \]
Similarly, we define the \emph{Betti cone of pure $R$-modules} as
\[ \BB^{\rm{pure}}_{\mathbb{Q}}(R) = \{ c_1 \beta^R(M_1) + \cdots +c_n \beta^R(M_n) \mid c_i \in \mathbb{Q}_{\geq 0}, M_i {\text{\ is a finitely generated pure\ }} R{\text{-module}}\}. \]

\item A Betti table $\beta^R(M)$ of a nonzero finitely generated $R$-module $M$ is said to be \emph{extremal} in the Betti cone $\mathbb{B}_{\mathbb{Q}}(R)$ if whenever there exist $c_1, \ldots,c_n\in \mathbb{Q}_{\geq 0}$ and finitely generated $R$-modules $M_1,\ldots,M_n$ such that
$\beta^R(M) = \sum_{i=1}^{n} c_i \beta^R(M_i)$, then we have  $\beta^R(M)= c \beta^R(M_i)$ for some $i \in \{1,\ldots, n\}$ and $c \in \mathbb{Q}_{>0}$.

\item The condition $\BB_{\mathbb{Q}}(R)=\BB^{\rm{pure}}_{\mathbb{Q}}(R)$ is equivalent to saying that every Betti table is a finite $\mathbb Q_{\geq 0}$-linear combination of pure Betti tables. In particular, when the equality holds, we see that all extremal rays of $\mathbb B_{\mathbb Q}(R)$ are generated by Betti tables of pure modules.
\end{enumerate}

\begin{rmk}\label{bettiTableremark} Let $R$ be a standard graded $\sk$-algebra. 
\begin{enumerate}[{\rm (i)}]
    \item 
    If $c_1,\ldots, c_n \in \mathbb Z_{\geq 0}$ and $M_1, \ldots, M_n$ are finitely generated graded $R$-modules, then we have the equality $\sum\limits_{i=1}^n c_i \beta^R(M_i)= \beta^R\left(\bigoplus\limits_{i=1}^n M_i^{c_i}\right)$. Thus, every $\beta \in \mathbb B_{\mathbb Q}(R)$ can be written as $\beta= c \beta^R(M)$ for some $R$-module $M$ and $c \in \mathbb Q_{\geq 0}$.

    \item Suppose $\mathbb B_{\mathbb Q}(R)=\mathbb B _{\mathbb Q}^{\rm pure}(R)$. Then, for any $R$-module $M$, there exist pure $R$-modules $M_i$ of type ${\bf d}_i$ with ${\bf d}_i\neq {\bf d}_j$ for $i\neq j$ and $c_i\in\BQ_{\geq 0}$ such that $\beta^R(M)=\sum_ic_i\beta^R(M_i).$
    \end{enumerate}

\end{rmk}

The following result of \cite{AK20} gives a class of modules over standard graded $\sk$-algebras whose Betti table spans an extremal ray in $\BB_{\mathbb{Q}}(R)$.

\begin{thm}[\cite{AK20}, Theorem 2.4]\label{SomeExtremalRays} Let $R$ be a standard graded $\sk$-algebra. Then the Betti tables of
the shifts of the following modules span extremal rays of $\BB_{\mathbb{Q}}(R)$: 
\begin{enumerate}[{\rm (a)}]
\item A pure $R$-module $M$, with $\codim(M) = \pdim(M)$.
\item $R/\m^j$ where $j\in \mathbb{N}$.
\end{enumerate}
\end{thm}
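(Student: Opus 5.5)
Given a decomposition $\beta^R(M)=\sum_{i=1}^n c_i\beta^R(M_i)$ with $c_i>0$ and $M_i\neq 0$, the plan is to show each $\beta^R(M_i)$ is proportional to $\beta^R(M)$. Two facts drive the argument. First, every nonzero entry of $\beta^R(M)$ is a sum of nonnegative contributions. Second, any module $N$ has $\beta_{i,j}^R(N)=0$ for $j<\min\{\text{initial degree of }\Omega_{i-1}\}+1$, and the Betti table determines the Hilbert series up to the factor $H_R(z)$ via $H_N(z)=H_R(z)\sum_{i,j}(-1)^i\beta_{i,j}(N)z^j$, whenever this makes sense (truncating in each degree, since only finitely many $\beta_{i,j}$ have $j\le$ a given degree). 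In both cases the first step is the support statement: if $\beta^R(M)$ vanishes at position $(i,j)$, so does every $\beta^R(M_i)$.

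\textbf{Case (a).} Let $M$ be pure of type $(\delta_0,\dots,\delta_p,\infty,\dots)$ with $p=\pdim M=\codim M$. The support statement forces each $M_i$ to be pure with $\pdim M_i\le p$, and with nonzero entries only at degrees $\delta_k$. I would then use the Hilbert series. For each $i$ we have $H_{M_i}(z)=H_R(z)\,q_i(z)$ with $q_i(z)=\sum_k(-1)^k b_{i,k}z^{\delta_k}$. The condition $\codim M=p$ means $H_R(z)q(z)$ has pole order $\dim R-p$ at $z=1$ (with $R$ equidimensional issues handled by using $\dim M$). Since $\dim M_i\le \dim R$, while $\pdim M_i\le p$ gives $\codim M_i\le p$ by Auslander–Buchsbaum-type inequalities — or more simply since $\operatorname{grade}\le\pdim$ — we get $\dim M_i\ge \dim R-p$.

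The key comparison is the following. $M$ is a direct summand–like combination, so $\dim M=\max_i \dim M_i$ after accounting for cancellation. It is cleaner to argue as in the Boij–Söderberg setting: modules with $\codim=\pdim=p$ and nonzero $\beta$ only at degrees $\delta_0<\dots<\delta_p$ have $q(z)$ divisible by $(1-z)^p$. This gives $p$ linear conditions on $(b_0,\dots,b_p)$, i.e. the Herzog–Kühl equations, whose solution space is one-dimensional. I would show that each $M_i$ also has $\codim M_i=p$. Indeed, $\sum c_i q_i=q$ vanishes to order $p$ at $1$, and each $q_i$ vanishes to order at least $\codim M_i\ge$ ... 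This is the delicate step. The approach is to pass to grade: $\operatorname{grade}M_i\le\pdim M_i\le p$, and localizing the sum of Ext-vanishings gives $\operatorname{Ext}^k(M_i,R)=0$ for $k<p$ by comparing with $M$. Equivalently, one can dualize the pure complexes and use exactness of the dual of $M$'s resolution, which shows each $M_i$ is perfect of grade $p$. The Herzog–Kühl equations then force $\beta(M_i)$ to be proportional to $\beta(M)$. I expect this grade-transfer step to be the main obstacle.

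\textbf{Case (b).} Here $M=R/\m^j$. By the support statement each $M_i$ is generated in degree $0$ with $\beta_0$ a multiple of $1$, and $\beta_{1,l}(M_i)=0$ for $l\ne j$. Thus $M_i\cong (R/I_i)^{a}$ with $I_i$ generated in degree $j$, so $\m^j\supseteq I_i$. Writing $r_j=\dim R_j$, the Betti number $\beta_{1,j}$ is the number of generators of $I_i$, at most $r_j$ per copy. Equality $\beta_{1,j}(M)=r_j\beta_0(M)$ together with the linear sum forces $I_i=\m^j$ for each $i$. The entries are bounded by proportion, and the sum attains the bound, so each term attains it. Hence $M_i\cong (R/\m^j)^a$, and its Betti table is proportional to that of $R/\m^j$. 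Shifts are handled identically.
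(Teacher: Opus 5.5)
\textbf{Case (a) has a genuine gap.} It sits exactly at the step you flag as the main obstacle, and the route you propose for it does not work. The relation $\beta^R(M)=\sum c_i\beta^R(M_i)$ gives no maps between $M$ and the $M_i$ or their resolutions. Neither $\operatorname{Ext}^k_R(M_i,R)$ nor exactness of the dual of a resolution is determined by the Betti table, so there is nothing to ``compare'' or ``localize''. Moreover, the hypothesis concerns $\codim(M)=\dim R-\dim M$ (equivalently $\cmd(M)=\cmd(R)$), not grade, and over a non-Cohen--Macaulay $R$ perfection is not the relevant notion.

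The missing idea is the one you wrote down and then abandoned. The Hilbert series is linear in the Betti table, so $H_M(z)=\sum_i c_iH_{M_i}(z)$. Because every coefficient is nonnegative, there is no cancellation at all: $c_i\dim_{\sk}(M_i)_t\le\dim_{\sk}M_t$ for every $t$. Hence $\dim M_i\le\dim M$, i.e.\ $\codim(M_i)\ge p$.

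For the opposite inequality, your appeal to $\operatorname{grade}\le\pdim$ is not enough, since grade bounds codim from below, not above. Instead, use $H_R(z)=f(z)/(1-z)^{\dim R}$ with $f(1)=e(R)\neq 0$. Then the order of vanishing of $q_i$ at $z=1$ equals $\dim R-\dim M_i=\codim(M_i)$. Since $q_i$ is a nonzero (Laurent) polynomial with at most $p_i+1$ terms of alternating sign, Descartes' rule of signs (or the Vandermonde argument) bounds this order by $p_i\le p$. Hence $p\le\codim(M_i)\le p_i\le p$. So each $M_i$ is pure of type $(\delta_0,\dots,\delta_p,\infty,\dots)$ with $\codim(M_i)=\pdim(M_i)$, and Theorem~\ref{thm:Herzog-Kuhl-in-general} makes $\beta^R(M_i)$ proportional to $\beta^R(M)$.

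\textbf{Case (b) has a fixable error.} Your counting idea is the right one; it is Remark~\ref{rem:directSumofR/m^j}. However, the intermediate claim $M_i\cong(R/I_i)^a$ is false. A module generated in degree $0$ with all relations in degree $j$ need not be a direct sum of cyclic modules; for example, take the cokernel of $(x,y)^t\colon R(-1)\to R^2$ over $R=\sk[x,y]$.

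You do not need that claim. Let $F_0=R^{b}$ be the free cover of $M_i$. The support constraints say $\Omega_1^R(M_i)\subseteq F_0$ is generated in degree $j$, so
$\beta_{1,j}^R(M_i)=\dim_{\sk}\Omega_1^R(M_i)_j\le\dim_{\sk}(F_0)_j=h_R(j)\,b$.
Sum these inequalities with the weights $c_i>0$ and compare with $\beta^R_{1,j}(R/\m^j)=h_R(j)\,\beta^R_{0,0}(R/\m^j)$. This forces equality for every $i$, so $\Omega_1^R(M_i)=\m^jF_0$ and $M_i\cong(R/\m^j)^{b}$.
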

Observe that since $\codim(M)$ is always finite, the modules of type (a) above have finite projective dimension.

\begin{rmk}\label{rem:R-mod-regular-seq-are-extremal}{\rm 
    Observe that if $\underline{x}=x_1,\ldots, x_n$ is an $R$-regular sequence, then $\cmd(R/\langle \underline{x} \rangle)=\cmd(R)$, or equivalently, $\codim(R/ \langle \underline{x}\rangle ) = \pdim(R/ \langle \underline{x}\rangle )$. Thus, if all the elements of the sequence $\underline{x}$ have the same degree, then by Theorem \ref{SomeExtremalRays}, we see that $\beta^R(R/ \langle \underline{x}\rangle )$ spans an extremal ray of $\mathbb B_{\mathbb Q}(R)$. 
}\end{rmk}

\begin{thm}[\cite{AK18}, Theorem 3.9]\label{thm:Herzog-Kuhl-in-general}
Let $R$ be a standard graded $\sk$-algebra and $M$ be a pure module of type ${{\bf d}}= (\delta_0,\ldots,\delta_p, \infty,\infty,\ldots)$. Then the following are equivalent:
\begin{enumerate}[{\rm (i)}]
\item $\codim(M)=\pdim(M)$.
\item $\beta_{i,\delta_i}(M) = \beta_{0,\delta_0}(M)  \prod\limits_{k \neq i,0} \left\vert \dfrac{\delta_k-\delta_0}{\delta_k-\delta_i} \right\vert$ for all $1\leq i \leq p$.
\end{enumerate}
\end{thm}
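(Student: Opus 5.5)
The plan is to translate both conditions into statements about the numerator of the Hilbert series of $M$, and then solve a Vandermonde system.

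Throughout I take $\codim(M)=\dim R-\dim M$. Since $M$ is pure of type ${\bf d}=(\delta_0,\ldots,\delta_p,\infty,\ldots)$ and has a finite minimal resolution, additivity of Hilbert series on that resolution gives
\[
H_M(z)=H_R(z)\,Q(z),\qquad Q(z)=\sum_{i=0}^p(-1)^i\beta_{i,\delta_i}(M)\,z^{\delta_i}.
\]
By minimality of the resolution, $\delta_0<\delta_1<\cdots<\delta_p$. Write $H_R(z)=h_R(z)/(1-z)^{d}$ with $d=\dim R$ and $h_R(1)=e(R)\neq 0$, as in Remark~\ref{rmk:Hilbert-series-etc}.

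\textbf{Step 1: codimension as an order of vanishing.} Let $s$ be the order of vanishing of the Laurent polynomial $Q$ at $z=1$. Then $H_M(z)=\bigl(h_R(z)\,Q(z)(1-z)^{-s}\bigr)/(1-z)^{d-s}$, and the new numerator does not vanish at $1$. Hence $\dim M=d-s$, so $\codim(M)=s$.

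\textbf{Step 2: the inequality $s\le p$.} The polynomial $Q$ is nonzero and has exactly $p+1$ monomials with distinct exponents. Suppose $Q^{(k)}(1)=0$ for all $0\le k\le p$. Equivalently, $\sum_i(-1)^i\beta_{i,\delta_i}\delta_i^k=0$ for $0\le k\le p$; this uses that the falling factorials $\delta(\delta-1)\cdots(\delta-k+1)$ and the powers $\delta^k$ span the same space. The coefficient matrix is an invertible $(p+1)\times(p+1)$ Vandermonde matrix in the distinct $\delta_i$, so all $\beta_{i,\delta_i}$ would vanish, a contradiction. Thus $s\le p$, i.e.\ $\codim(M)\le\pdim(M)$ always holds. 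Consequently $\codim(M)=\pdim(M)$ if and only if $Q^{(k)}(1)=0$ for $0\le k\le p-1$. By the same change of basis, this is equivalent to
\[
\sum_{i=0}^p(-1)^i\beta_{i,\delta_i}\,\delta_i^k=0\quad\text{for }0\le k\le p-1 .
\]

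\textbf{Step 3: solving the system.} This is a system of $p$ equations in the $p+1$ unknowns $x_i=(-1)^i\beta_{i,\delta_i}$, with a $p\times(p+1)$ Vandermonde coefficient matrix of full rank $p$, so its solution space is one-dimensional. By Cramer's rule, or by the partial fraction identity $\sum_i \delta_i^k/\prod_{k'\ne i}(\delta_i-\delta_{k'})=0$ for $k<p$, the solution line is spanned by $x_i=1/\prod_{k\neq i}(\delta_k-\delta_i)$.

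Normalizing by $x_0$ then gives
\[
\beta_{i,\delta_i}=\beta_{0,\delta_0}\prod_{k\neq i,0}\frac{\delta_k-\delta_0}{\delta_k-\delta_i},
\]
up to sign. Because the $\delta$'s are increasing, $\prod_{k\ne i}(\delta_k-\delta_i)$ has sign $(-1)^i$. Hence the sign of this product cancels against the $(-1)^i$ in $x_i$, and the formula holds with absolute values, exactly as in (ii). This proves (i)$\Rightarrow$(ii).

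For (ii)$\Rightarrow$(i), the prescribed Betti numbers lie on the solution line, so $Q$ vanishes to order at least $p$ at $1$. Combined with Step 2, the order is exactly $p$, so $\codim(M)=\pdim(M)$.

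The main obstacle is Step 1: making sure that $\codim$ is identified correctly with $\dim R-\dim M$ in this graded, not necessarily Cohen--Macaulay setting, and that $h_R(1)\ne0$, so that no cancellation with $h_R$ occurs. The remaining steps are routine Vandermonde linear algebra and sign bookkeeping.
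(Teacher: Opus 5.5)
Your argument is correct and is essentially the standard Herzog--K\"uhl proof behind the result the paper quotes from [AK18]; the paper itself gives no proof. Taking $\codim(M)=\dim R-\dim M$, the factorization $H_M(z)=H_R(z)\sum_i(-1)^i\beta_{i,\delta_i}z^{\delta_i}$ together with $h_R(1)=e(R)\neq 0$ identifies $\codim(M)$ with the order of vanishing of that Laurent polynomial at $z=1$. The Vandermonde system then gives both $\codim(M)\le\pdim(M)$ and the one-dimensional solution line, and your sign check (that $1/\prod_{k\neq i}(\delta_k-\delta_i)$ has sign $(-1)^i$) correctly produces the absolute-value formula in (ii) and the implication (ii)$\Rightarrow$(i).
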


\begin{thm}[\cite{AK20}, Theorem 3.4, Theorem 4.3]\label{dim0and1extremals} Let $R$ be a standard graded $\sk$-algebra with $H_R(z)= (1+nz)/(1-z)^d$, $d\leq 1$. Then \begin{enumerate}[{\rm (a)}]
\item $\BB_{\mathbb{Q}}(R)=\BB^{\rm{pure}}_{\mathbb{Q}}(R)$.
\item \begin{enumerate}[{\rm (i)}]
				\item If $d=0$, then the extremal rays of $\BB_{\mathbb{Q}}(R)$ are spanned
by the Betti tables of the shifts of $R$, and $\sk$.
				\item If $d=1$, then  the extremal rays of $\BB_{\mathbb{Q}}(R)$ are spanned
by the Betti tables of the shifts of $R$, $R/\m^i$, and $R/\langle l\rangle^i$, where $i \in \mathbb{N} $ and $l$ is a linear regular element in $R$.
         \end{enumerate}
\end{enumerate} 
 In particular, $\BB_{\mathbb{Q}}(R)=\BB^{\rm{pure}}_{\mathbb{Q}}(R)$.
\end{thm}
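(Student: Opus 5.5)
The plan is to exploit that in both cases the first syzygy of any module has a very rigid Betti table. Everything then reduces to a bookkeeping step on column $0$, controlled by Hilbert function inequalities. If $n=0$, then $R$ is $\sk$ or $\sk[x]$, which are polynomial rings, so we may assume $n\ge 1$.

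\emph{Case $d=0$.} Here $H_R(z)=1+nz$, so $\m^2=0$. Let $M$ be a module with minimal presentation $0\to\Omega\to F_0\to M\to 0$. Since $\Omega\subseteq \m F_0$ and $\m^2=0$, $\Omega$ is killed by $\m$, so $\Omega\cong\bigoplus_j \sk(-j)^{c_j}$. Moreover $\beta^R(\sk)$ is pure linear with $\beta_i=n^i$, because $\Omega_1(\sk)=\m\cong\sk(-1)^n$. Hence for $i\ge1$ we have $\beta_{i,j+i-1}(M)=c_j n^{i-1}$. This coincides in columns $i\ge 1$ with $\sum_j (c_j/n)\,\beta^R(\sk(-(j-1)))$. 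The column-$0$ entry of this combination in degree $j-1$ is $c_j/n$. Since $\Omega_j\subseteq(\m F_0)_j=\m_1\otimes (F_0)_{j-1}$, we get $c_j\le n\,\beta_{0,j-1}(M)$. Therefore
\[\beta^R(M)=\sum_j \tfrac{c_j}{n}\,\beta^R(\sk(-(j-1)))+\sum_j\Bigl(\beta_{0,j}(M)-\tfrac{c_{j+1}}{n}\Bigr)\beta^R(R(-j)),\]
with all coefficients in $\BQ_{\ge0}$. This proves (b)(i), and (a) follows since these tables are pure.

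\emph{Case $d=1$.} Now $R$ is Cohen--Macaulay of dimension one with minimal multiplicity. Since $\sk$ is infinite, choose a linear regular element $l$; then $\ov R=R/lR$ has $H_{\ov R}=1+nz$, and minimal multiplicity gives $\m^2=l\m$. I will carry out three steps.

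First, for a maximal Cohen--Macaulay module $N$ (that is, $\depth N\ge1$), $l$ is $N$-regular, so $\beta^R(N)=\beta^{\ov R}(N/lN)$. By the $d=0$ case, this table is a nonnegative combination of shifts of $\beta^{\ov R}(\sk)$ and $\beta^{\ov R}(\ov R)$. I then need to identify these with Betti tables of modules over $R$: $\beta^{\ov R}(\ov R)=\beta^R(R)$, and $\beta^{\ov R}(\sk)$ should be realized, up to a positive scalar and shift, by the MCM module $\m$. Indeed, $\m/l\m$ is annihilated by $\m$ since $\m^2=l\m$, so it is a direct sum of copies of $\sk(-1)$ over $\ov R$. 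Consequently $\beta^R(\m)$ is a positive multiple of $\beta^{\ov R}(\sk(-1))$.

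Second, I need to interpret the shifted tables $\beta^R(\m(-j))$. Since $\m^{i}=l^{i-1}\m\cong\m(-(i-1))$, these are exactly the tails (columns $\ge1$) of $\beta^R(R/\m^{i})$. On the other hand, shifts of $R$ are the tails of $\beta^R(R/\langle l\rangle^i)$, because $R/l^iR$ has the resolution $0\to R(-i)\to R$.

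Third, let $M$ be arbitrary. Its first syzygy $\Omega_1(M)$ is MCM, so by the first step the columns $\ge1$ of $\beta^R(M)$ form a nonnegative combination of shifted tails of $\beta(R/\m^i)$ and $\beta(R/l^iR)$. It remains to subtract the corresponding column-$0$ contributions and show that the remainder in column $0$ is nonnegative, so that it can be absorbed by shifts of $\beta(R)$. This is the main obstacle. Since $\Omega_1(M)\subseteq \m F_0$, I would compare the degreewise generator counts of $\Omega_1(M)$, split into its ``$\m$-type'' and ``free-type'' summands via the first step, against $\dim_\sk(\m F_0)_j$, using $H_R$ and $\m^2=l\m$, to produce the required inequality, as in the $d=0$ case. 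If the direct count is too crude, I would first pass to $M/H^0_\m(M)$, apply the $d=0$ argument to the finite-length part, and glue using the exact sequence $0\to H^0_\m(M)\to M\to M/H^0_\m(M)\to0$.
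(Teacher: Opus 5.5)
The paper does not prove this statement; it is quoted from \cite{AK20}, so I am judging your argument on its own. Your $d=0$ case is correct and complete. Two small points there: $(\m F_0)_j=\m_1\cdot(F_0)_{j-1}$ has dimension $n\beta_{0,j-1}(M)$ because $\m^2=0$; the literal tensor product $\m_1\otimes(F_0)_{j-1}$ is larger and would not give your bound. Also, extremality of the listed tables comes from Theorem~\ref{SomeExtremalRays}.

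For $d=1$ there are two genuine gaps. First, ``$R$ is Cohen--Macaulay'' does not follow from $H_R(z)=(1+nz)/(1-z)$. Take $R=\sk[u,v,w]/(u^2,uv,uw,v^3+w^3)$: one checks $\dim_\sk R_i=3$ for all $i\ge 1$, yet $u$ is a nonzero socle element. So no linear regular element $l$ exists, and by Theorem~\ref{thm:depthZeroImpliesArtinianLevel} even the conclusion $\BB_{\BQ}(R)=\BB^{\rm pure}_{\BQ}(R)$ fails. The Cohen--Macaulay hypothesis, which \cite{AK20} and the paper's introduction include, must be assumed. Granting it, your Steps 1 and 2 are correct.

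Second, and more seriously, Step 3 is the entire content of the $d=1$ case, and you leave it open. ``Subtracting the corresponding column-$0$ contributions'' presupposes a choice you have not made. A summand of $\beta^R(\Omega_1(M))$ starting in degree $t$ is a multiple of $\beta^R(\m(-(t-1)))$ or of $\beta^R(R(-t))$. It is the tail of $\beta^R((R/\m^i)(-(t-i)))$, resp.\ $\beta^R((R/l^iR)(-(t-i)))$, for \emph{every} $i\ge 1$, so its unit column-$0$ entry may be placed in any degree $g<t$. Let $w_t=\beta_{1,t}(M)-\beta_{2,t+1}(M)/(n+1)\ge 0$ be the total coefficient in degree $t$ that your Step 1 produces. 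You need a nonnegative rational assignment of each $w_t$ into the slots $\beta_{0,g}(M)$ with $g<t$. By Hall's condition (max-flow), such an assignment exists if and only if $\sum_{t\le T}w_t\le\sum_{g\le T-1}\beta_{0,g}(M)$ for all $T$. A low-degree count against $(\m F_0)_j$, as in $d=0$, cannot give this.

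What does work is the following. For $s\gg 0$ one has $\dim_\sk\Omega_1(M)_s=(n+1)\sum_t w_t\le\dim_\sk(F_0)_s=(n+1)\beta_0(M)$, which is a multiplicity count. Apply this to the truncation $M^{(T-1)}$, whose Betti entries in rows $\le T-1$ agree with those of $M$ by Remark~\ref{rem:generators-up-to-degree-j} (Lemma 4.6 of \cite{AK20}). This yields exactly the prefix inequalities. The leftover column-$0$ entries are then absorbed by shifts of $\beta^R(R)$.

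Your fallback via $H^0_{\m}(M)$ does not help. The $d=0$ argument uses $\m^2=0$ in the ring, which fails over a one-dimensional $R$ even for finite-length modules. Moreover, Betti tables are not additive along $0\to H^0_{\m}(M)\to M\to M/H^0_{\m}(M)\to 0$.
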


\begin{rmk}\label{rem:directSumofR/m^j}
\cite[Lemma 2.3]{AK20}
Let $R$ be a standard graded $\mathsf k$-algebra, and $h_R(j)=\dim(R_j)$. If $M$ is an $R$-module generated in degree zero, then $\beta_{1,j}^R(M)\leq h_R(j)\beta_{0,0}^R(M)$ for all $j \in \mathbb N$. Furthermore, equality holds for some $j$ if and only if $M$ is a direct sum of copies of $R/\mathfrak m^j$ (and hence, $\beta_{1,k}^R(M)=0$ for $k \neq j$).    
\end{rmk}

\begin{rmk}\label{rem:generators-up-to-degree-j}
\cite[Lemma 4.6]{AK20} Let $M$ be a finitely generated graded $R$-module,
Let $M^{(j)}$ be the submodule of $M$ generated by elements of degree at most $j$. Then $\Omega_i^R(M)^{(j+i)} \simeq \Omega_i^R(M^{(j)})^{(j+i)}$ and $\beta_{i, k+i}^R(M) = \beta_{i, k+i}^R(M^{(j)})$ for $k \leq j$ and $i \geq 0$.
\end{rmk}

\subsection{Koszul Algebras} Recall that a $\sk$-algebra $R$ is Koszul if $\sk$ has a linear resolution over $R$. In this subsection, we collect the necessary known facts about Koszul algebras.

The following theorem due to Avramov\textendash Peeva gives a characterization of Koszul algebras.
\begin{thm}{\rm(\cite{AP01})}\label{KoszulAEP}
 Let $R$ be standard graded $\sk$-algebra. Then the following are equivalent:
 \begin{enumerate}[{\rm (i)}]
     \item $\reg(M)$ is finite for every $R$-module $M$.
     \item $\reg(\sk)$ is finite.
     \item $R$ is Koszul.
 \end{enumerate}
\end{thm}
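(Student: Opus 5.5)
The three conditions are linked by trivial implications except (ii)$\Rightarrow$(iii), which carries all the content. For (iii)$\Rightarrow$(ii): if $R$ is Koszul, then $\sk$ has a linear resolution, so $\beta_{i,j}^R(\sk)\neq 0$ forces $j=i$, and hence $\reg(\sk)=0$. For (i)$\Rightarrow$(ii), simply take $M=\sk$. For (iii)$\Rightarrow$(i), I would induct on the length of $M$ and compare with a filtration. Since $\Omega$ is additive on short exact sequences up to regularity, we have $\reg(N)\le\max\{\reg(N'),\reg(N'')\}$ for $0\to N'\to N\to N''\to 0$. Filtering $M$ by its graded pieces $M_{\ge t}$ reduces the claim to modules of finite length, and then to shifts $\sk(-a)$, which have regularity $a$. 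For $M$ of positive dimension I would instead use the standard estimate $\reg_R(M)\le \reg_S(M)+\reg_R(\sk)$, where $S\to R$ is a polynomial presentation. This estimate is proved by resolving $M$ over $S$, tensoring with $R$, and iterating the change-of-rings comparison. Over $S$, regularity is finite by Hilbert's syzygy theorem.

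For (ii)$\Rightarrow$(iii), I would argue by contradiction using the Hopf algebra structure of $\tor^R(\sk,\sk)$. By the Milnor--Moore/André/Sjödin theory, $\tor^R(\sk,\sk)$ is the graded dual of the universal enveloping algebra $U(\pi(R))$ of the homotopy Lie algebra $\pi(R)$, and this holds compatibly with the internal grading. By Poincaré--Birkhoff--Witt, as an algebra it is a tensor product of exterior algebras on the odd-degree part and divided power algebras on the even-degree part of the dual of $\pi(R)$. Suppose $R$ is not Koszul. Then some bihomogeneous element of $\pi^i(R)$ lives in internal degree $j>i$. If such a nonlinear element $x$ exists with $i$ even, its divided powers $\gamma_n(x^\ast)$ are nonzero in every characteristic. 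They lie in $\tor_{ni}^R(\sk,\sk)_{nj}$, so $nj-ni=n(j-i)\to\infty$, which contradicts $\reg(\sk)<\infty$.

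The main obstacle is producing an \emph{even}-degree nonlinear element of $\pi(R)$. First I would split on the defining ideal: write $R=S/I$ with $I\subseteq \langle X_1,\ldots,X_n\rangle^2$. Since $\pi^2(R)$ is dual to the minimal generators of $I$, any minimal generator of $I$ of degree $\ge 3$ gives an even-degree nonlinear element, and the argument above finishes. So I may assume $I$ is quadratic. In that case I would take a nonlinear element $x\in\pi^i$ of minimal homological degree $i\ge 3$. If $i$ is odd, I would bracket $x$ with a suitable linear $y\in\pi^1$, which exists since $\edim(R)\ge 1$, to obtain $[x,y]\in\pi^{i+1}$ in internal degree $j+1>i+1$.

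The crux is to show that $[x,y]\neq 0$ for some choice of $y$. I would try to deduce this from minimality of $i$ together with the fact that $\pi^{\ge 2}$ is generated in a controlled way by $\pi^1$ and lower-degree data. Failing that, I would fall back on the Avramov--Peeva/Backelin comparison: show directly that the maximal internal degree $t_i=\max\{j:\beta^R_{i,j}(\sk)\neq 0\}$ satisfies $t_{a+b}\ge t_a+t_b$ along a suitable sequence of indices, using nonvanishing products in the Koszul-dual part of $\operatorname{Ext}_R(\sk,\sk)$. Such superadditivity would force $t_i-i$ to grow without bound once it is positive.
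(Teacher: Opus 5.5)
The paper gives no proof of this statement. It quotes the equivalence from \cite{AP01}, and separately uses the Avramov--Eisenbud bound (Theorem~\ref{Thm-reg-over-poly-ring}) for the (iii)$\Rightarrow$(i) direction. Your trivial implications are fine, and so is your divided-power argument as far as it goes. Since $\tor^R(\sk,\sk)$ is the free divided-power algebra on the variables of the acyclic closure, a nonlinear variable in even homological degree $i$ and internal degree $j>i$ gives $\tor^R_{ni}(\sk,\sk)_{nj}\neq 0$ for all $n$. This disposes of non-quadratic $R$ and of any even-degree nonlinear deviation.

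\textbf{The gap} is the remaining case: $I$ quadratic, with the first nonlinear element $x\in\pi^i(R)$ in odd degree $i\ge 3$. This is where the substance of \cite{AP01} lies.

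Your crux, that $[x,y]\neq 0$ for some $y\in\pi^1(R)$, cannot come from minimality. Under the standing hypothesis $\reg_R(\sk)=r<\infty$ it is in fact \emph{false}. The same divided-power argument shows two things. First, $\pi(R)$ has no nonzero nonlinear elements of even degree. Second, it has at most $r$ linearly independent nonlinear elements of odd degree, since a product of $r+1$ distinct odd nonlinear variables is nonzero of defect $>r$. Now $[x,y]$ is nonlinear of even degree, and so is $[x,x]$ (or the square of $x$ in characteristic $2$). Hence both are forced to vanish. Proving $[x,y]\neq 0$ is therefore equivalent to the contradiction you want, not a step towards it.

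Minimality of $i$ gives no leverage either. All of $\pi^{<i}(R)$ is linear, and brackets of linear elements are linear, so $x$ does not lie in the Lie subalgebra generated by $\pi^{<i}(R)$. It is a new Lie generator, and PBW and the divided-power structure say nothing about its brackets.

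Your fallback fails for the same reason. Products with the diagonal (Koszul dual) part of $\operatorname{Ext}_R(\sk,\sk)$ add no defect. So superadditivity of $t_i$ would need nonvanishing products of several nonlinear classes, which the hypothesis already excludes. What is missing is an independent structural input ruling out a nonzero, finite-dimensional space of odd-degree nonlinear elements of $\pi(R)$ that commute with $\pi^1(R)$. Without it the argument does not close.

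A smaller point concerns (iii)$\Rightarrow$(i). Citing the Avramov--Eisenbud bound is fine, but ``tensoring with $R$ and iterating'' is not an argument. There is a short self-contained route. If $x\in R_1$ is $M$-regular, multiplication by $x$ induces the zero map on $\tor^R(-,\sk)$. So $0\to M(-1)\to M\to M/xM\to 0$ gives $\beta^R_{i,j}(M)\le\beta^R_{i,j}(M/xM)$. Combine this with $\reg(M)\le\max\{\reg(H^0_{\m}(M)),\reg(M/H^0_{\m}(M))\}$ and induct on $\dim M$. This reduces to modules of finite length, where your filtration by shifts of $\sk$ applies.
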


The linearity of the resolution of $\sk$ is in fact equivalent to the weaker property of purity of the resolution. We show this in Theorem \ref{Koszul}.

The following result of Avramov--Eisenbud provides an upper bound on the Castelnuovo--Mumford regularity of an $R$-module $M$.
\begin{thm}[\cite{AE92}, Theorem 1]\label{Thm-reg-over-poly-ring}
Let $R$ be a Koszul algebra, and let $S$ be the 
polynomial ring mapping onto $R$. Then the Castelnuovo--Mumford regularity of any module $M$ over $R$ is 
finite; in fact, \[\reg_R(M) \leq \reg_S(M).\]
\end{thm}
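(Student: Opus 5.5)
The plan is to reduce to the case of a module with a linear $S$-resolution and to prove the key claim that, over a Koszul algebra $R$, such a module also has a linear $R$-resolution. Write $r=\reg_S(M)$ and shift so that $M$ is generated in degrees $\ge 0$. Finiteness of $\reg_R(M)$ is already guaranteed by Theorem~\ref{KoszulAEP}; the content is the bound. I would use the truncation $N=M_{\ge r}$. Over the polynomial ring $S$ it is standard that $N$ is generated in degree $r$ and has an $r$-linear $S$-resolution. For the rest of $M$, consider the exact sequence $0\to N\to M\to M/N\to 0$. Here $M/N$ has finite length and lives in degrees $<r$.

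I would first handle $M/N$ by filtering it by its graded pieces. Each piece $M_j$, for $j<r$, is a direct sum of copies of $\sk(-j)$, and since $R$ is Koszul, $\reg_R(\sk(-j))=j<r$. Hence $\reg_R(M/N)\le r-1$. The long exact sequence of $\tor^R(-,\sk)$ then gives
\[\reg_R(M)\le \max\{\reg_R(N),\ \reg_R(M/N)\}.\]
So it suffices to show $\reg_R(N)\le r$, i.e. that $N$ has a linear $R$-resolution. I would check the degree bookkeeping in this step carefully, since the connecting map shifts homological degree by one. The relevant inequality $\reg(M)\le\max\{\reg N,\reg(M/N)\}$ does hold.

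For the key claim, I would compare the two resolutions through a change-of-rings spectral sequence for $S\to R$, arranged to converge to $\tor^R(N,\sk)$. The $E^2$-page should be built from $\tor^S(N,\sk)$ and from $\tor^R(\sk,\sk)$, or equivalently from the Koszul homology of $R$ over $S$. Both inputs are concentrated on their diagonals: $\tor^S_q(N,\sk)$ lies in internal degree $r+q$ by linearity, and $\tor^R_p(\sk,\sk)$ lies in degree $p$ by Koszulness of $R$. If the $E^2$-terms contributing to total homological degree $n$ all sit in internal degree at most $r+n$, then so does $\tor^R_n(N,\sk)$. Since $N$ is generated in degree $r$, minimality forces degree at least $r+n$, and this pins $\tor^R_n(N,\sk)$ to degree exactly $r+n$.

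The main obstacle is choosing a spectral sequence whose abutment really is $\tor^R(N,\sk)$, not $\tor^S$. The most natural Cartan--Eilenberg sequences, such as $\tor^R_p(N,\tor^S_q(R,\sk))\Rightarrow\tor^S_{p+q}(N,\sk)$, run the wrong way. My first attempt would therefore be Avramov's sequence
\[E^2=\tor^{\tor^S(R,\sk)}\bigl(\tor^S(N,\sk),\sk\bigr)\Rightarrow\tor^R(N,\sk),\]
with a check that Koszulness of $R$ gives the needed diagonal bound on the $E^2$-page. If that degree control fails, the fallback is Koszul duality. Pass to the Koszul dual algebra $R^!$ and its quotient map from $S^!=\bigwedge V^*$. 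The linear $S$-resolution of $N$ corresponds to an exterior module, and the BGG-type correspondence translates linearity of the $R$-resolution into a generation statement over $R^!$, which can be read off from the $S^!$-module structure.
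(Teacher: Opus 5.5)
The paper does not prove this statement; it quotes it from Avramov--Eisenbud. Judged on its own terms, your proposal has a genuine gap at the key step.

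\textbf{What works.} Your reduction is correct: the truncation $N=M_{\ge r}$, the filtration of $M/N$ by copies of $\sk(-j)$ with $j<r$, and the inequality $\reg_R M\le\max\{\reg_R N,\reg_R(M/N)\}$ are all fine.

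\textbf{The gap.} The key claim, that $N$ has a linear $R$-resolution, is not established, and the reasoning you give for it rests on a false identification.
\begin{itemize}
\item The $E^2$-page of Avramov's spectral sequence is $\tor^{A}(\tor^S(N,\sk),\sk)$ with $A=\tor^S(R,\sk)$, the Koszul homology of $R$. This is not ``equivalent'' to $\tor^R(\sk,\sk)$.
\item Unlike $\tor^R(\sk,\sk)$, the algebra $A$ is not diagonal for Koszul $R$. One only has $\tor^S_i(R,\sk)_j=0$ for $j>2i$ (Backelin--Kempf). For example, for $R=\sk[x,y]/\langle x^2,y^2\rangle$ the class $e_1e_2$ lies in $\tor^S_2(R,\sk)_4$.
\item Bar-construction chains built from such classes sit in total degree $n$ with internal degree larger than $r+n$. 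Whether they die on $E^2$, or are killed later by higher differentials (which raise internal degree minus homological degree by one), is not a matter of degree bookkeeping. It is essentially the whole theorem, so ``check that Koszulness gives the diagonal bound on $E^2$'' is not a step you can simply carry out.
\item The Koszul-duality fallback is too vague to assess, and its arrow is reversed: $S\twoheadrightarrow R$ induces $R^!\twoheadrightarrow S^!$, not a surjection $S^!\to R^!$.
\item A minor point: invoking Theorem~\ref{KoszulAEP} for finiteness is circular, because its implication ``Koszul $\Rightarrow$ finite regularity'' is the Avramov--Eisenbud result itself. This is harmless, since the bound yields finiteness anyway.
\end{itemize}

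\textbf{The missing idea.} What you need is the right spectral sequence, and it has exactly the shape you guessed: built from $\tor^S$ and a piece of $\tor^R(\sk,\sk)$.
\begin{itemize}
\item Take $S$ to be the polynomial ring on a minimal generating set of the maximal ideal of $R$, and let $R\langle X\rangle$ be the acyclic closure (Tate resolution) of $\sk$ over $R$.
\item Its degree-one variables give the Koszul complex $K^R=R\langle X_1\rangle$, so $R\langle X\rangle=K^R\langle X_{\ge 2}\rangle$ is semifree over $K^R$.
\item By Gulliksen, $R\langle X\rangle$ is a minimal resolution, so $\tor^R(\sk,\sk)\cong\sk\langle X\rangle$. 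Koszulness then forces every variable of homological degree $i$ to have internal degree $i$.
\item Filter $M\otimes_R R\langle X\rangle$ by the $X_{\ge2}$-degree of monomials. Since $H(M\otimes_R K^R)=H(M\otimes_S K^S)=\tor^S(M,\sk)$, this gives a convergent spectral sequence
\[E^1_{p,q}=\sk\langle X_{\ge 2}\rangle_p\otimes_{\sk}\tor^S_q(M,\sk)\Longrightarrow\tor^R_{p+q}(M,\sk).\]
\item Because $\sk\langle X_{\ge2}\rangle_p$ sits in internal degree $p$, a nonzero $E^1_{p,q}$ in internal degree $j$ forces $j-p-q\le\reg_S M$.
\item Hence $\tor^R_n(M,\sk)_j\ne 0$ implies $j-n\le\reg_S M$.
\end{itemize}
This proves $\reg_R M\le\reg_S M$ directly for every $M$, so your truncation step becomes unnecessary.
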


The next theorem is a useful tool to construct Koszul algebras, and also to detect Koszulness.
\begin{thm}[\cite{CDR13}, Theorem 3.2]\label{Thm-transfer-of-Koszulness}
Let $R$ be a Koszul algebra, and let $R'$ be a quotient of $R$. Then\begin{enumerate}[{\rm (a)}]
\item If $\reg_R(R') \leq 1$ and $R$ is Koszul, then $R'$ is Koszul.

\item If $\reg_R(R')$ is finite and $R'$ is Koszul, then $R$ is Koszul.
\end{enumerate}
\end{thm}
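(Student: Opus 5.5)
The plan is to prove both parts with the change-of-rings (Cartan--Eilenberg) spectral sequence for the map $R\to R'$, applied to the residue field. For an $R'$-module $N$ it reads
\[E^2_{p,q}=\tor^{R'}_p\bigl(\tor^R_q(N,R'),\sk\bigr)\Longrightarrow \tor^R_{p+q}(N,\sk).\]
Everything is graded, and the differentials $d^r\colon E^r_{p,q}\to E^r_{p-r,q+r-1}$ preserve internal degree. I will take $N=\sk$. Then $V_q:=\tor^R_q(\sk,R')\cong\tor^R_q(R',\sk)$ is a graded $\sk$-vector space, so $E^2_{p,q}\cong \tor^{R'}_p(\sk,\sk)\otimes_{\sk} V_q$. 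Since $\tor^R_q(R',\sk)$ lives in degrees $\ge q$ (and in degrees $\ge q+1$ for $q\ge 1$, because the defining ideal of $R'$ lies in $\m$) and in degrees $\le q+\reg_R(R')$, all degree information on $E^2$ comes from the $R'$-Betti numbers of $\sk$ and the $R$-Betti numbers of $R'$.

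For (b), suppose $R'$ is Koszul and $r=\reg_R(R')<\infty$. Then $\tor^{R'}_p(\sk,\sk)$ is concentrated in degree $p$, and $V_q$ lies in degrees $\le q+r$. Hence $E^2_{p,q}$ lies in degrees $\le p+q+r$. The same bound passes to $E^\infty$ and hence to the abutment, so $\tor^R_n(\sk,\sk)$ lives in degrees $\le n+r$. This gives $\reg_R(\sk)\le r<\infty$, and Theorem~\ref{KoszulAEP} shows that $R$ is Koszul.

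For (a), suppose $R$ is Koszul and $\reg_R(R')\le 1$. Then $V_0=\sk$ in degree $0$, and $V_q$ is concentrated in degree exactly $q+1$ for $q\ge1$. I will show by induction on $n$ that $\tor^{R'}_n(\sk,\sk)$ is concentrated in degree $n$; the case $n=0$ is trivial. Assume the claim for all $p<n$, and suppose some nonzero class of $\tor^{R'}_n(\sk,\sk)=E^2_{n,0}$ has degree $j>n$. No differential enters the row $q=0$. The outgoing differentials $d^r$ ($r\ge2$) land in $E^r_{n-r,r-1}$, a subquotient of $\tor^{R'}_{n-r}(\sk,\sk)\otimes V_{r-1}$. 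By the inductive hypothesis this lives only in degree $(n-r)+(r-1)+1=n$. Since the differentials preserve degree, the degree-$j$ component of $E_{n,0}$ survives to $E^\infty$. It therefore contributes a nonzero piece of $\tor^R_n(\sk,\sk)$ in degree $j>n$, contradicting the Koszulness of $R$. So $\sk$ has a linear resolution over $R'$.

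I expect the main obstacle to be the bookkeeping in (a): one must check that the bound on the targets $E_{n-r,r-1}$ pins their degree to exactly $n$. This uses both the inductive hypothesis and the fact that $\tor^R_q(R',\sk)$ sits in degree exactly $q+1$. The lower bound $\ge q+1$ comes from minimality of the resolution of $R'$, and the upper bound $\le q+1$ comes from $\reg_R(R')\le1$. The existence and degree-compatibility of the spectral sequence are standard and I will take them as given.
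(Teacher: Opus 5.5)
The paper does not prove this statement; it is quoted from \cite{CDR13}, so there is no internal argument to compare with. Your change-of-rings spectral sequence proof is correct and follows the standard route for such transfer results: compare regularity along $R\to R'$, then invoke Avramov--Peeva for (b). You also rightly prove (b) without assuming $R$ is Koszul. The stated header includes that hypothesis, and with it (b) would be vacuous.

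One claim in your write-up is false, although the argument does not depend on it. The groups $\tor^R_q(R',\sk)$ for $q\ge 1$ need not live in degrees $\ge q+1$. Knowing only that the defining ideal $I$ lies in $\m$ gives degrees $\ge q$; to get $\ge q+1$ you would need $I\subseteq \m^2$.

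This fails precisely in the paper's own application (Remark~\ref{Rmk-transfer-of-Koszulness}). There $R'=R/\langle x\rangle$ with $x$ a linear nonzerodivisor, and $\tor^R_1(R',\sk)\cong \sk(-1)$ sits in degree $1$. So the targets $E^r_{n-r,r-1}$ are not pinned to degree exactly $n$. They lie in degrees $\le (n-r)+(r-1)+1=n$. This upper bound uses only $\reg_R(R')\le 1$ and the inductive hypothesis. It is all you need to see that the degree-$j$ part of $E^2_{n,0}$ with $j>n$ survives to $E^\infty_{n,0}$. That survivor is a quotient of $\tor^R_n(\sk,\sk)$, which gives the contradiction. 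Drop the lower bound and the phrase ``exactly $n$''.

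You should also justify $E^2_{p,q}\cong \tor^{R'}_p(\sk,\sk)\otimes_{\sk} V_q$ rather than just assert that $V_q$ is a vector space. Because $R\to R'$ is surjective, the $R'$-action on $\tor^R_q(\sk,R')$ is induced by the $R$-action. That action is killed by $\m_R$ through the first argument. Hence $V_q$ is a direct sum of shifted copies of $\sk$ as an $R'$-module, and $\tor^{R'}_p(\sk(-a),\sk)=\tor^{R'}_p(\sk,\sk)(-a)$ gives the tensor decomposition with the correct grading.

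With these adjustments both parts go through as you wrote them.
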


\begin{rmk}\label{Rmk-transfer-of-Koszulness}{\rm As a consequence of the above theorem, we see that if $x\in R_1$ is a nonzerodivisor on $R$, then $R$ is Koszul if and only if $R/\langle x \rangle$ is Koszul.
}\end{rmk}

\section{Purity of \texorpdfstring{$\sk$}{k} and its applications}\label{sec:purity-general}

Fix $n\in \BN$, and let $R=\sk[X]/\langle X^n\rangle$. Then $\sk$ has a pure resolution over $R$. Moreover, the resolution of $\sk$ is linear if and only if $n \leq 2$. Note that in this case we have $\mu(\m)=1$. In fact, the entire description of the Betti cone $\mathbb B_{\mathbb Q}(R)$ is known if $\mu(\m)=1$ (see \cite[Proposition 1.1]{BBEG12}). Therefore, for the rest of the article, we assume that $\mu(\m)\geq 2$. Our first goal is to prove that $\sk$ has a pure resolution if and only if the resolution is linear. We prepare it with the following lemma.

\begin{lemma}\label{quadraticlemma}
Let $R$ be a standard graded $\sk$-algebra with $\mu(\m)\geq 2$. If $\sk$ has a pure resolution over $R$, then $R$ is a quadratic algebra.
\end{lemma}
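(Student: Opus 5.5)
Write $R=S/I$ with $S=\sk[X_1,\ldots,X_n]$, $n=\mu(\m)\geq 2$, and $I\subseteq \langle X_1,\ldots,X_n\rangle^2$ a homogeneous ideal. The plan is to read off the degrees $\delta_1$ and $\delta_2$ of the first two syzygy modules of $\sk$ and force $\delta_2=2$, since $\delta_2=2$ means exactly that $I$ is generated in degree $2$.

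We have $\beta_{0,0}^R(\sk)=1$ and $\beta_{1,1}^R(\sk)=n$, so purity forces $\delta_0=0$ and $\delta_1=1$. For the second step, recall the standard description of the second syzygies of $\sk$. If $F_1=R(-1)^n\to \m$ sends the basis to $x_1,\ldots,x_n$, then $\Omega_2^R(\sk)=\{(a_1,\ldots,a_n):\sum a_ix_i=0\}$. Its minimal generators are of two kinds:
\begin{itemize}
\item the Koszul relations $x_je_i-x_ie_j$, which live in degree $2$;
\item lifts of minimal generators of $I$.
\end{itemize}
More precisely, there is the well-known exact sequence $\tor_2^S(\sk,\sk)\to \tor_2^R(\sk,\sk)\to \tor_1^S(R,\sk)\to 0$, so $\beta_{2,j}^R(\sk)=\beta_{1,j}^S(R)$ for $j\geq 3$. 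Moreover $\beta_{2,2}^R(\sk)=\binom{n}{2}+\dim I_2$.

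This is where $\mu(\m)\geq 2$ enters. Because $n\geq 2$, we have $\binom{n}{2}\geq 1$, so $\beta_{2,2}^R(\sk)\neq 0$. To be careful, the map $\tor_2^S(\sk,\sk)\to\tor_2^R(\sk,\sk)$ is injective in degree $2$. Indeed, a relation $\sum a_i x_i=0$ in $R$ with the $a_i$ linear lifts to $\sum a_iX_i\in I_2$, and $I_2$ consists of the quadrics, so the Koszul relations stay minimal and independent in $R$. Alternatively, one can check directly that $\Omega_2^R(\sk)$ has no nonzero elements of degree $<2$ and that the elements $x_je_i-x_ie_j$ are nonzero in degree $2$ modulo $\m\,\Omega_2^R(\sk)$, which is trivially $0$ in degree $2$. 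Since $\beta_{2,2}^R(\sk)\neq 0$, purity forces $\delta_2=2$, and hence $\beta_{2,j}^R(\sk)=0$ for all $j\geq 3$.

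It then follows that $\beta_{1,j}^S(R)=0$ for $j\geq 3$. Since $I\subseteq \langle X\rangle^2$, every minimal generator of $I$ has degree at least $2$, so $I$ is generated by quadrics and $R$ is quadratic. The main point needing care is the identification $\beta_{2,j}^R(\sk)=\beta_{1,j}^S(R)$ for $j\geq 3$. I would justify it directly: a minimal generator $f=\sum f_iX_i$ of $I$ of degree $j\geq 3$ gives the syzygy $(\bar f_i)$ of degree $j$. If this syzygy were in $\m\,\Omega_2^R(\sk)$ plus the span of the Koszul relations, then lifting back to $S$ would express $f$ in terms of lower-degree elements of $I$ and $\m I$, contradicting the minimality of $f$.
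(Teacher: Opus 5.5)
Your proof is correct and takes essentially the same route as the paper. In both, a nonzero degree-$2$ Koszul relation $x_2e_1-x_1e_2$ forces, by purity, every minimal second syzygy of $\sk$ into degree $2$, and a minimal generator of $I$ of degree $j$ gives a minimal second syzygy of degree $j$; the paper proves the latter by exactly the lifting argument you sketch at the end, rather than quoting the change-of-rings sequence $\tor_2^S(\sk,\sk)\to\tor_2^R(\sk,\sk)\to\tor_1^S(R,\sk)\to 0$. One small plus: you check explicitly that the Koszul relation is a nonzero minimal generator, using $\mu(\m)\geq 2$ and $(\m\,\Omega_2^R(\sk))_2=0$, which the paper leaves implicit.
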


\begin{proof}
Let $R=S/I$, where $S=\sk[X_1,\dots,X_n]$ and $I\subset {(\m_S)}^2$ is a homogeneous ideal in $S$. Set $x_i=X_i+I$ in $R$. More generally, we use capital letters for polynomials in $S$ and use small letters for corresponding image in $R$.

Let $\ \cdots\to R(-1)^n\overset{\phi_1}\lrar R\to \sk\to 0$ be a minimal free resolution of $\sk$ over $R$ and $\Omega_i$ denote its $i^{th}$ syzygy module. Note that $\Omega_1\simeq \m$. Therefore $\phi_1(e_i)=x_i$ for all $i$, where $\{e_1, \ldots, e_n\}$ is a basis of $R(-1)^n$.

Take  a minimal generator $F\in I\setminus  \m_S I$ of $I$, and write $F=\sum_i G_iX_i$, where $G_i\in\m_S$.
Then we have $\phi_1(\sum g_ie_i)=f=0$ in $R$, and hence $\sum g_ie_i\in\Omega_2$. We claim that $\sum g_ie_i\notin\m\Omega_2$. 

Suppose $\sum g_ie_i\in\m\Omega_2$, then there exist $h_1,\ldots, h_r\in \Omega_2$ and $b_1,\dots,b_r\in \m$ such that $\sum g_ie_i=\sum b_jh_j$. Write $h_j=\sum_ih_{j_i}e_i$ for all $j$. Hence, $\sum_ih_{j_i}x_i=\phi_1(\sum_ih_{j_i}e_i)=0$. Thus, $\sum_iH_{j_i}X_i\in I$ for all $j$. 

Now, $\sum g_ie_i=\sum b_jh_j$ and the fact that $\{e_1, \ldots, e_n\}$ is a basis implies that $g_i=\sum_j b_jh_{j_i}$ for all $i$.
Thus, for all $i$, we have $G_i=\sum_j B_jH_{j_i} +G_i'$ for some $G_i'\in I$. This implies that $\sum_iG_iX_i=\sum_{i,j} B_jH_{j_i}X_i +\sum_iG_i'X_i$. Note that $\sum_{i,j} B_jH_{j_i}X_i +\sum_iG_i'X_i\in \m_SI$ follows from the fact that for all $j$, we have $ \sum_iH_{j_i}X_i\in I$, $B_j\in\m_S$ and $\sum_iG_i'X_i\in \m_SI$. Thus, $F\in \m_S I$ which is a contradiction.

Note that $x_je_i-x_ie_j\in \Omega_2$ and $\deg(x_je_i-x_ie_j)=2$. Since $\sk$ has a pure resolution and $\sum g_ie_i\in\Omega_2\setminus \m\Omega_2 $, we have $\deg(\sum g_ie_i)=2$. Therefore $\deg(F)=2$, and hence $I$ is minimally generated in degree $2$.
\end{proof}

In \cite[Remark 6(1)]{CDR13}, it is remarked that if $\beta_{2,j}^R(\sk)=0$ for every $j\neq2$, then $R$ is a quadratic algebra. We prove this in the previous lemma for the sake of completeness. From \cite[Remark 11]{CDR13}, it follows that quadratic complete intersections are Koszul algebras, which we use in the following theorem.

\begin{thm}\label{Koszul}
Let $R$ be a standard graded $\sk$-algebra with $\mu(\m)\geq 2$. Then  $\sk$ has a pure resolution if and only if $\sk$ has a linear resolution, i.e., $R$ is Koszul.
\end{thm}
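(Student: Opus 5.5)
The direction ``linear implies pure'' is immediate from the definitions, so the content is the converse. Assume $\sk$ has a pure resolution of type ${\bf d}=(\delta_0,\delta_1,\delta_2,\ldots)$ with $\mu(\m)=n\geq 2$. Since $\Omega_1\simeq\m$ is generated in degree $1$, we have $\delta_0=0$ and $\delta_1=1$. By Lemma~\ref{quadraticlemma}, $R=S/I$ with $I$ generated by quadrics. The Koszul syzygies $x_je_i-x_ie_j$ then give $\delta_2=2$. The goal is to show $\delta_i=i$ for all $i$, i.e.\ that the pure type is forced to be linear.

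\textbf{Reduction via a complete intersection.} Since $\sk$ is infinite and $I$ is generated in degree $2$, I would choose a maximal regular sequence of general quadrics $F_1,\ldots,F_c$ inside $I_2$; in particular $c\geq 1$ whenever $I\neq 0$. Set $Q=S/\langle F_1,\ldots,F_c\rangle$. This is a quadratic complete intersection, hence Koszul by \cite[Remark 11]{CDR13}, and it surjects onto $R$. If I can show $\reg_Q(R)$ is finite, then Theorem~\ref{Thm-transfer-of-Koszulness}(b) does not apply in the needed direction, so this route alone is insufficient. I would therefore instead use Theorem~\ref{KoszulAEP}: it suffices to prove $\reg_R(\sk)<\infty$, i.e.\ that $\delta_i-i$ is bounded.

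\textbf{Bounding $\delta_i-i$ by using purity twice.} The key observation is that in a minimal resolution of $\sk$ the degrees satisfy $\delta_{i}\geq\delta_{i-1}+1$. Moreover, the Koszul complex of $x_1,\ldots,x_n$ over $R$ maps into the resolution. More precisely, there is an inclusion of the exterior algebra $\wedge^i \sk^n=\tor^S_i(\sk,\sk)\to\tor^R_i(\sk,\sk)$, which is nonzero in degree $i$ for $i\leq n$; indeed, $\tor^R(\sk,\sk)$ contains $\wedge(R_1)$ as the subalgebra generated by $\tor_1$ (an injective map, by Assmus/Tate theory). Purity then forces $\delta_i=i$ for $i\leq n$. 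For $i>n$, I would use the algebra structure on $\tor^R(\sk,\sk)$: it is generated, as an algebra with divided powers, by the Tate variables. The divided powers of the degree $2$ homological-degree $2$ variables arising from $I_2$ live in bidegree $(2k,2k)$, and they are nonzero, since the deviations $\varepsilon_2=\mu(I)\geq 1$ give a nonzero divided-power subalgebra $\Gamma(V_2)$ embedded in $\tor^R(\sk,\sk)$. Products of these with the exterior classes on $R_1$ produce nonzero linear classes in every homological degree. Hence each $\tor^R_i(\sk,\sk)$ has a nonzero element in internal degree $i$, and purity gives $\delta_i=i$ for all $i$. The case $I=0$ is a polynomial ring, where this is immediate.

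\textbf{Main obstacle.} The delicate point is justifying that linear elements survive in every homological degree, namely the injectivity of the relevant part of the acyclic closure/Tate construction in the linear strand. I would phrase this using the Poincaré–Betti series of the linear strand, $\sum_i\beta_{i,i}^R(\sk)z^i$, which is $1/H_{R^!}$-type data. It suffices to know that $\beta_{i,i}\neq 0$ for all $i$ when $n\geq 2$. Via the Koszul dual (quadratic dual) $R^!$, one has $\beta_{i,i}^R(\sk)=\dim R^!_i$, and $R^!$ is a quadratic algebra with at least $2$ generators and relations only of the form dual to $I_2$. I would argue that $R^!$ is infinite dimensional in every degree: since $\dim I_2\leq\binom{n+1}{2}$, the dual relations do not kill all quadratic monomials of the form $y_iy_j-(-1)y_jy_i$ enough to make $R^!$ finite. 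If this fails for some $R$ (e.g.\ when $R^!$ is finite), I would fall back on the extremal case, namely $I=\m_S^2$, and handle it directly: there $\sk$ has a linear resolution with $\beta_i=n^i$.
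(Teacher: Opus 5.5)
Your reduction is the right one: with purity it suffices to show $\beta^R_{i,i}(\sk)\neq 0$ for every $i$. You correctly get this for $i\le n$ from the injectivity of $\wedge(\sk^n)=\tor^S(\sk,\sk)\to\tor^R(\sk,\sk)$.

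\textbf{The gap.} The range $i>n$ is the whole content of the theorem, and it is not established. Your Tate-variable paragraph asserts that the divided powers $X^{(k)}$ of the bidegree-$(2,2)$ variables, and their products with classes in $\tor_1$, are nonzero in $\tor^R(\sk,\sk)$. That is true, but only because of the Gulliksen--Schoeller theorem: the acyclic closure $R\langle X\rangle$ is a \emph{minimal} resolution, so $\tor^R(\sk,\sk)\cong\sk\langle X\rangle$ is a free divided-power algebra. The construction itself does not guarantee minimality. You neither cite nor prove this; in fact you label it the main obstacle.

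Your proposed substitute does not close it. The identification $\beta^R_{i,i}(\sk)=\dim_\sk R^!_i$ (Löfwall) is fine; note it is $H_{R^!}$, not $1/H_{R^!}$. But the inequality $\dim I_2\le\binom{n+1}{2}$ says nothing about whether $R^!$ vanishes in high degree: it also holds for $I=0$, where $R^!$ is the finite exterior algebra. The operative hypothesis $I_2\neq 0$ is never actually used. Finally, ``falling back on $I=\m_S^2$'' replaces $R$ by a different ring, so it proves nothing about the given $R$.

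\textbf{The missing idea.} You need a comparison ring that uses $I_2\neq0$, and you already had one in your abandoned Step 3. Take $0\neq f\in I_2$ and $R'=S/\langle f\rangle$ (or your complete intersection $Q$). It is Koszul with $\mathcal P^{R'}_{\sk}(z)=(1+z)^n/(1-z^2)$, so $\beta^{R'}_{i,i}(\sk)>0$ for all $i$. What you need from it is not regularity transfer but the inequality $\beta^R_{i,i}(\sk)\ge\beta^{R'}_{i,i}(\sk)$. In your Koszul-dual language this is the surjection $R^!\twoheadrightarrow (R')^!$, which comes from the inclusion of quadratic relation spaces.

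The paper proves this inequality by hand. By induction, the minimal $R$-resolution of $\sk$ has differentials $\psi_i=\begin{pmatrix}\bar\phi_i & *\\ 0 & *\end{pmatrix}$, where $\bar\phi_i$ is the linear differential over $R'$ reduced modulo $I$. The columns of $\bar\phi_{i+1}$ remain minimal syzygies over $R$ because their entries are linear forms and $I$ contains no linear forms. Hence every $\psi_i$ has a nonzero linear entry, and purity forces the whole resolution to be linear.

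With either this comparison or an explicit citation of Gulliksen's theorem, your outline becomes a complete proof. The paper's route has the advantage of being elementary.
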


\begin{proof}  If $\sk$ has a linear resolution, then the resolution is pure. For the converse, assume that $\sk$ has a pure resolution over $R$. Let $R=S/I$, where $S=\sk[X_1,\dots,X_n]$ and $I\subset \m_S^2$ is homogeneous ideal in $S$.  Then, by Lemma \ref{quadraticlemma}, $I$ is generated in degree $2$.  Let $f\in I\setminus \m_S I$ and $R'=S/\langle f\rangle $. Since $R'$ is a quadratic hypersurface, $R'$ is Koszul, i.e., $\sk$ has a linear resolution over $R'$. Let $\F: \cdots \overset{\phi_2}\lrar F_1\overset{\phi_1}\lrar F_0\lrar \sk\rar0$ be a minimal free resolution of $\sk$ over $R'$.
	
{\it Claim}:  
$\G :\cdots \overset{\psi_2}\lrar G_1\overset{\psi_1}\lrar G_0\lrar \sk\rar0$ is a minimal free resolution of $\sk$ over $R$, where $$\bar{\phi_i}=\phi_i\otimes_{R'} R \text {\ \  and \ \ }
\psi_i=\left[\begin{array}{cc}
\bar{\phi_i} & *\\0&*
\end{array}\right].$$
	
To prove the claim, we use induction on $i$. Note that $\phi_1$, $\psi_1$ are images of the matrix $\left[\begin{array}{cccc}
X_1&X_2&\cdots&X_n
\end{array}\right]$ in the respective rings, hence the claim is true for $i=1$. Now, assume that claim is true for $j\leq i$, i.e., assume that for all $j \leq i$ we have
$$\psi_j=\left[\begin{array}{cc}
\bar{\phi_j} & *\\0&*
\end{array}\right].$$

Let $\alpha = \left[\begin{array}{ccc}
\alpha_1 & \cdots & \alpha_{\beta_i}
\end{array}\right]^t$
 be a column of $\phi_i$, i.e., $\alpha\in \Omega_{i+1}^{R'}(\sk)\setminus \m_{R'} \Omega_{i+1}^{R'}(\sk)$. Note that since $\sk$ has a linear resolution over $R'$, we have $\alpha_i\in\m_{R'}\setminus\m_{R'}^2$. Furthermore, since $I$ is generated in degree $2$, $\deg(\alpha_i)=\deg(\bar{\alpha_i})=1$, where \, $\bar{}$ \,  denotes images in $R$.

 Now, $\phi_i\alpha=0$ implies $\left[\begin{array}{cc}
\bar{\phi}_i & *\\0&*
\end{array}\right]\left[\begin{array}{c}
\bar{\alpha}\\0
\end{array}\right]=0$, and hence $ \left[\begin{array}{c}
\bar{\alpha}\\0
\end{array}\right]\in \Omega_{i+1}^R(\sk)$. Moreover $\bar{\alpha}\notin\m_R\Omega_{i+1}^R(\sk)$,  since $\bar{\alpha_j}\in \m_R\setminus\m_R^2$ for all $j$ and $\Omega_{i+1}^R(\sk)\in\m_RG_i$. Therefore claim holds for $i+1$.

Since $\sk$ has a pure resolution, each nonzero entry of $\psi_i$ has the same degree. By the claim, these nonzero entries must be linear, and hence $\sk$ has a linear resolution.
\end{proof}

The above lemma gives us the following necessary condition for the equality $\mathbb B_{\mathbb Q}(R)= \mathbb B_{\mathbb Q}^{\rm{pure}}(R)$.  
\begin{thm}\label{thm:purityImpliesKoszul}
    Let $R$ be a standard graded $\mathsf k$-algebra such that the extremal rays of $\mathbb B_{\mathbb Q}(R)$ are spanned by the Betti tables of pure $R$-modules.  Then $R$ is a Koszul algebra.
\end{thm}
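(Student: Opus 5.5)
The plan is to apply the hypothesis to one well-chosen extremal ray, namely the one spanned by $\beta^R(\sk)$, and then invoke Theorem~\ref{Koszul}. Throughout I use the standing assumption $\mu(\m)\geq 2$ made at the start of this section. This assumption is genuinely needed: for $R=\sk[X]/\langle X^n\rangle$ with $n\geq 3$, the residue field has a pure resolution that is not linear.

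First, take $j=1$ in Theorem~\ref{SomeExtremalRays}(b). Since $R/\m=\sk$, the Betti table $\beta^R(\sk)$ spans an extremal ray of $\BB_{\mathbb Q}(R)$.

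Second, by hypothesis this ray is spanned by the Betti table of some pure $R$-module $M$. So $\beta^R(\sk)=c\,\beta^R(M)$ for some $c\in\mathbb Q_{>0}$. Note that we cannot simply assume $M=\sk$; the argument must pass purity from $M$ to $\sk$ through the Betti tables.

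Third, purity is a property of the Betti table alone: every nonzero column has exactly one nonzero entry. Multiplying a table by a positive scalar does not change which entries are nonzero. Hence $\beta^R(\sk)$ has at most one nonzero entry in each column, so $\sk$ has a pure resolution over $R$.

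Finally, Theorem~\ref{Koszul} applies because $\mu(\m)\geq 2$. It converts the pure resolution of $\sk$ into a linear one, so $R$ is Koszul.

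I do not expect a real obstacle. The substantive work is already in Theorem~\ref{Koszul} and in the extremality of $\beta^R(\sk)$ from \cite{AK20}. The only points needing care are the scalar-multiple step in the third paragraph and the explicit use of the assumption $\mu(\m)\geq 2$.
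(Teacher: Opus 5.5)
Your proposal is correct and matches the paper's proof: you use Theorem~\ref{SomeExtremalRays} (with $R/\m=\sk$) to see that $\beta^R(\sk)$ spans an extremal ray, conclude from the hypothesis that $\sk$ has a pure resolution, and then apply Theorem~\ref{Koszul}. Your extra remarks on the scalar-multiple step and on the standing assumption $\mu(\m)\geq 2$ (illustrated by $\sk[X]/\langle X^n\rangle$) simply spell out details the paper's short argument leaves implicit.
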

\begin{proof}
    By Theorem~\ref{SomeExtremalRays}, we know that $\beta^R(\sk)$ spans an extremal ray of $\mathbb B_{\BQ}(R)$. Thus, by hypothesis, $\mathsf k$ has a pure resolution. Now the  result follows from Theorem~\ref{Koszul}.
\end{proof}

Next, we provide yet another necessary condition in for the equality $\mathbb B_{\mathbb Q}(R)= \mathbb B_{\mathbb Q}^{\rm{pure}}(R)$. We prove that when the equality holds,  all finitely generated modules over $R$ have rational Poincar\'e series sharing a common denominator. 

\begin{defn}[\cite{Ro05}, Definition 2.1]
    A standard graded $\sk$-algebra $R$ is said to be \emph{good} (in the sense of Roos) if there exists a polynomial $g(z)\in \mathbb Z[z]$ such that for every finitely generated graded $R$-module $M$ we have $\mathcal P_M^R(z) g(z) \in \mathbb Z[z]$.
\end{defn}
Note that in particular, over good algebras, every finitely generated graded module has rational Poincar\'e series. 

\begin{prop}\label{prop:linearSyzygy}
Let $R$ be a Koszul $\sk$-algebra with $H_R(z)= f(z)/(1-z)^d$, and $M$ be a pure $R$-module. Then 
\begin{enumerate}[{\rm (a)}]
    \item there exists $i\geq 0$ such that $\Omega_i^R(M)$ has linear resolution. 
    \item $\mathcal P_M(z)$ is rational with denominator $f(-z)$.
\end{enumerate}
\end{prop}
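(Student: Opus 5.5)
Part (a) will come from finite regularity; part (b) will then follow from Remark~\ref{rmk:Hilbert-series-etc}(c) together with a finite correction term.

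For (a), since $R$ is Koszul, Theorem~\ref{KoszulAEP} gives $\reg_R(M)=r<\infty$. Let $M$ be pure of type ${\bf d}=(\delta_0,\delta_1,\ldots)$. If $\pdim_R(M)<\infty$, then some syzygy $\Omega_i^R(M)$ is zero (or free, generated in a single degree by purity), and this trivially has a linear resolution. So assume $\pdim_R(M)=\infty$.

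In a minimal resolution we have $\delta_{i+1}\ge \delta_i+1$, because the entries of the differentials lie in $\m$. Hence $\delta_i-i$ is a nondecreasing sequence of integers. It is bounded above by $r$, since $\beta_{i,\delta_i}\neq 0$ forces $\delta_i-i\le r$. A nondecreasing integer sequence bounded above stabilizes, so there is $i_0$ with $\delta_{i+1}=\delta_i+1$ for all $i\ge i_0$. By purity, $\Omega_{i_0}^R(M)$ is generated in degree $\delta_{i_0}$, and its $j$th syzygy is generated in degree $\delta_{i_0}+j$. That is, $\Omega_{i_0}^R(M)$ has a linear resolution, which proves (a).

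For (b), set $N=\Omega_{i_0}^R(M)$, which we may shift so that it is generated in degree $0$. By Remark~\ref{rmk:Hilbert-series-etc}(c),
\[
H_N(z)=H_R(z)\,\mathcal P_N^R(-z).
\]
Replacing $z$ by $-z$ gives $\mathcal P_N(z)=H_N(-z)/H_R(-z)$. Write $H_N(z)=q(z)/(1-z)^{d'}$ with $d'=\dim N\le d$ and $q$ a Laurent polynomial; since $N$ is generated in degree $0$, $q\in\mathbb Z[z]$. Then
\[
\mathcal P_N(z)=\frac{q(-z)(1+z)^{d-d'}}{f(-z)}.
\]
The numerator is a polynomial, so $\mathcal P_N(z)$ is rational with denominator $f(-z)$. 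Finally,
\[
\mathcal P_M(z)=\sum_{i<i_0}\beta_i z^i+z^{i_0}\mathcal P_N(z),
\]
and the finite sum only adds a polynomial, so $\mathcal P_M(z)$ is rational with denominator $f(-z)$.

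The main point of care is the case $\pdim_R(M)<\infty$, where the Poincaré series is a polynomial and the conclusion is trivial. The other point is the bookkeeping of $(1-z)$ powers when $\dim N<d$, which is handled by the factor $(1+z)^{d-d'}$ above.
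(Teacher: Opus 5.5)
Your proof is correct and follows essentially the same route as the paper. In (a), finite regularity from Theorem~\ref{KoszulAEP} together with purity forces some syzygy to have a linear resolution: the paper picks an index where $\delta_i-i$ attains $\reg_R(M)$, while you use that the nondecreasing bounded sequence $\delta_i-i$ stabilizes. In (b), Remark~\ref{rmk:Hilbert-series-etc}(c) then gives the rational Poincar\'e series. Your version is slightly more careful than the paper's in two places: you shift $N$ to be generated in degree $0$ before applying Remark~\ref{rmk:Hilbert-series-etc}(c), and you keep track of the factor $z^{i_0}$ in $\mathcal P_M(z)=\sum_{i<i_0}\beta_i z^i+z^{i_0}\mathcal P_N(z)$.
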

\begin{proof}
(a) Since $R$ is Koszul, by Theorem \ref{KoszulAEP}, we have $\reg_R(M)=s<\infty$. Then there exists $i\geq 0$ such that $\beta^R_{i,i+s}(M) \neq 0$. Note that $\Omega_i^R(M)$ has pure resolution. Since $\reg_R(M)=s$, we see that $\beta^R_{i+j,i+s+j+k}(M)=0$ for all $j \geq 0, k\geq 1$. Therefore, the resolution of $\Omega_i^R(M)$ is linear.

(b) Since $\Omega_i^R(M)$ has a linear resolution, we get $\mathcal P_{\Omega_i^R(M)}(z)= H_{\Omega_i^R(M)}(-z)/H_R(-z)$. Suppose that $H_{\Omega_i^R(M)}(z)=g(z)/(1-z)^m$. Then we get $\mathcal P_{\Omega_i^R(M)}(z)=(1+z)^{d-m}g(-z)/f(-z)$. Since $\mathcal P_M(z) - \mathcal P_{\Omega_i^R(M)}(z)$ is a polynomial, we get that $\mathcal P_M(z)$ is rational with denominator $f(-z)$. 
\end{proof}

\begin{cor}\label{cor:commonDenominator}
Let $R$ be a Koszul $\sk$-algebra with $H_R(z) =f(z)/(1-z)^d$. Then for every $R$-module $M$ with $\beta^R(M)\in \mathbb B ^{{\rm{pure}}}_{\mathbb Q}(R)$, we have that $\mathcal P_M(z)$ is rational with denominator $f(-z)$.
\end{cor}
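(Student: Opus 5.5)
The plan is to write $\beta^R(M)$ as a nonnegative rational combination of Betti tables of pure modules, apply Proposition~\ref{prop:linearSyzygy}(b) to each pure summand, and use the fact that Poincar\'e series depend linearly on Betti tables. Since $\beta^R(M)\in\mathbb B^{\rm pure}_{\mathbb Q}(R)$, there are pure $R$-modules $M_1,\ldots,M_n$ and $c_1,\ldots,c_n\in\mathbb Q_{\geq 0}$ with $\beta^R(M)=\sum_k c_k\beta^R(M_k)$. Summing over the internal degree $j$ in each column gives $\beta_i^R(M)=\sum_k c_k\beta_i^R(M_k)$ for every $i$, and hence the identity of formal power series
\[
\mathcal P_M(z)=\sum_{k=1}^n c_k\,\mathcal P_{M_k}(z).
\]

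Next, since $R$ is Koszul, Proposition~\ref{prop:linearSyzygy}(b) applies to each pure module $M_k$. So there are polynomials $p_k(z)\in\mathbb Z[z]$ with $\mathcal P_{M_k}(z)=p_k(z)/f(-z)$. To be explicit, the proof of that proposition gives $p_k$ as $f(-z)$ times a polynomial plus $(1+z)^{d-m}g(-z)$. If $d-m<0$, this expression could a priori be a Laurent-type rational function. It is still a genuine rational function with denominator dividing $f(-z)$ up to powers of $(1+z)$, and I would read the statement of Proposition~\ref{prop:linearSyzygy}(b) as already asserting that the denominator is $f(-z)$. We therefore take it as given.

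Combining the two steps,
\[
\mathcal P_M(z)=\frac{\sum_k c_k p_k(z)}{f(-z)},
\]
so $\mathcal P_M(z)f(-z)=\sum_k c_kp_k(z)$ is a polynomial with rational coefficients. On the other hand, $\mathcal P_M(z)f(-z)$ is a product of integer power series, so all its coefficients are integers. It is therefore a polynomial in $\mathbb Z[z]$, which shows that $\mathcal P_M(z)$ is rational with denominator $f(-z)$.

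The only point needing care is the linearity step. The coefficients $c_k$ are rational, not integral, but Poincar\'e series are linear in the Betti table, and integrality of the resulting numerator follows from the integrality argument above. There is no real obstacle beyond that.
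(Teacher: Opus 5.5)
Your proof is correct and follows the paper's argument: write $\beta^R(M)$ as a nonnegative rational combination of pure Betti tables and apply Proposition~\ref{prop:linearSyzygy}(b) to each summand. Your integrality check on $\mathcal P_M(z)f(-z)$ is a harmless extra detail the paper leaves implicit, and your worry about $d-m<0$ is unfounded, since $m=\dim\Omega_i^R(M)\le\dim R=d$.
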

\begin{proof}
We may assume that $M\neq 0$. Since $\beta^R(M) \in  \mathbb B ^{\rm{pure}}_{\mathbb Q}(R)$, there exist pure $R$-modules $M_1,\ldots, M_r$ and $c_1,\ldots, c_r \in \mathbb Q_{>0}$ such that $\beta^R(M)= \sum\limits_{i=1}^r c_i\beta^R(M_i)$. Hence, the result follows, since by Proposition \ref{prop:linearSyzygy}, each $\beta^R(M_i)$ has rational Poincar\'e series with denominator $f(-z)$.
\end{proof}

As an immediate consequence of Corollary \ref{cor:commonDenominator} and Theorem \ref{thm:purityImpliesKoszul}, we obtain the following: 
\begin{thm}\label{thm:good} 
Let $R$ be a standard graded $\sk$-algebra with $H_R(z)=f(z)/(1-z)^d$ such that $\mathbb B_{\mathbb{Q}}(R)=\mathbb B_{\mathbb{Q}}^{\rm{pure}}(R)$. Then for every $R$-module $M$, $\mathcal P_M(z)$ is rational with denominator $f(-z)$.
In particular, if $\mathbb B_{\mathbb{Q}}(R)=\mathbb B_{\mathbb{Q}}^{\rm{pure}}(R)$, then $R$ is good.
\end{thm}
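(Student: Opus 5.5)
The argument assembles Theorem~\ref{thm:purityImpliesKoszul} and Corollary~\ref{cor:commonDenominator}; the only point needing care is the case $\mu(\m)\le 1$, which those results exclude.

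\emph{Main case, $\mu(\m)\geq 2$.} Assume $\mathbb B_{\mathbb Q}(R)=\mathbb B_{\mathbb Q}^{\rm pure}(R)$. Then every extremal ray of $\mathbb B_{\mathbb Q}(R)$ is spanned by a pure Betti table, as noted in the remarks after the definition of the cones. Theorem~\ref{thm:purityImpliesKoszul} therefore applies, and $R$ is Koszul. Now take any finitely generated graded $R$-module $M$. Since $\beta^R(M)\in\mathbb B_{\mathbb Q}(R)=\mathbb B^{\rm pure}_{\mathbb Q}(R)$, Corollary~\ref{cor:commonDenominator} gives that $\mathcal P_M(z)$ is rational with denominator $f(-z)$, that is, $f(-z)\mathcal P_M(z)\in\mathbb Z[z]$.

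\emph{Degenerate case, $\mu(\m)\le 1$.} This is the one step needing separate attention, because Theorem~\ref{Koszul} assumes $\mu(\m)\geq 2$. If $\mu(\m)=0$, then $R=\sk$, every Poincar\'e series is a polynomial, and $f=1$, so there is nothing to prove. If $\mu(\m)=1$, then $R=\sk[X]$ or $R=\sk[X]/\langle X^n\rangle$ with $n\ge 2$.
\begin{itemize}
\item For $R=\sk[X]$, every module has finite projective dimension, so every Poincar\'e series is a polynomial.
\item For $R=\sk[X]/\langle X^n\rangle$, we have $f(z)=1+z+\cdots+z^{n-1}$ and $d=0$. Every $R$-module is a direct sum of shifts of the modules $R/\langle x^a\rangle$. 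Each of these is either free or has periodic Betti numbers all equal to $1$, so its Poincar\'e series is $1/(1-z)$.
\end{itemize}
So in the $\mu(\m)=1$ case we need $1-z\mid f(-z)$, i.e.\ $f(-1)=0$. Since $f(-1)=1-1+\cdots\pm1$, this holds exactly when $n$ is even. For odd $n\ge3$ the claim as stated appears false with denominator $f(-z)$. I would therefore either restrict the theorem to $\mu(\m)\geq 2$, consistent with the standing convention declared at the start of Section~\ref{sec:purity-general}, or invoke \cite[Proposition 1.1]{BBEG12}. The ``in particular'' statement, that $R$ is good, still holds in every case, taking the common denominator $1-z$ when $\mu(\m)=1$.

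\emph{Conclusion.} Under the section's standing assumption $\mu(\m)\ge 2$, the polynomial $g(z)=f(-z)\in\mathbb Z[z]$ works uniformly for all $M$. This is exactly the definition of $R$ being good in the sense of Roos.
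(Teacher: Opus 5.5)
Your proposal is correct and is the paper's own argument: equality of the cones makes the extremal rays pure, so Theorem~\ref{thm:purityImpliesKoszul} gives that $R$ is Koszul, and Corollary~\ref{cor:commonDenominator} then yields the common denominator $f(-z)$ for every $M$. Your side remark on the degenerate case is also right. The paper relies on its standing assumption $\mu(\m)\geq 2$ from Section~\ref{sec:purity-general}, and for $\sk[X]/\langle X^n\rangle$ with $n\geq 3$ odd the cones agree but $f(-z)\mathcal P_{\sk}(z)=f(-z)/(1-z)\notin\mathbb Z[z]$, since $f(-1)=1$; such an $R$ is nonetheless still good, with denominator $1-z$.
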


Two more classes of standard graded algebras are closely related to good algebras: (a) Absolutely Koszul algebras and (b) Algebras with Backelin--Roos property. An \emph{absolutely Koszul algebra} $R$ is an algebra over which every finitely generated module has finite \emph{linearity defect}. An algebra $R$ is said to satisfy \emph{Backelin--Roos property} if there exists a \emph{Golod} map $\varphi:Q \to R$ for some comlpete intersection ring $Q$. More details on the same can be found in \cite{CINR15}. It is known that if $R$ is Koszul, then we have the following chain of implications (see \cite{HI07}): 
\begin{center}
    $R$ satisfies Backelin--Roos property $\Longrightarrow R$ is Absolutely Koszul $\Longrightarrow R$ is good.
\end{center}
However, it is not known whether the above implications are strict or not. Thus, in view of Theorem \ref{thm:good}, the following natural questions arise:
\begin{question}
    Let $R$ be a standard graded $\sk$-algebra such that $\mathbb B_{\mathbb{Q}}(R)=\mathbb B_{\mathbb{Q}}^{\rm{pure}}(R)$.
\begin{enumerate}[{\rm (a)}]
        \item Is $R$ absolutely Koszul?
        \item Does $R$ satisfy Backelin--Roos property?
    \end{enumerate}
\end{question}

\section{Purity of Extremal Rays}\label{sec:purity-special}

In this section, we focus on some specific classes of standard graded algebras and study the equality of the pure and Betti cones for them. 

\subsection{Purity of Extremal Rays and the Artinian Property}

\begin{lemma}\label{syzygyLemma} Let $R$ be a standard graded $\sk$-algebra with $\mu(\m)=n$, and
$M$ an $R$-module. Fix $i\in\BZ_{\geq0}$, let $\Omega=\Omega_i^R(M)$ and $j$ be the smallest integer such that $\beta_{i,j}^R(M)\neq0$. Then $\beta_{i+1,j+1}^R(M)\leq n\beta_{i,j}^R(M)$ and equality holds if and only if\, $ \Omega^{(j)}\simeq \sk(-j)^{\beta_{i,j}^R(M)}$.
\end{lemma}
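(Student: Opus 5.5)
We first reduce to the case of a module generated in its lowest degree and then count linear relations. Let $\Omega=\Omega_i^R(M)$, $b=\beta_{i,j}^R(M)$, and $N=\Omega^{(j)}$, the submodule generated by elements of degree $j$. Since $j$ is the smallest degree with $\beta_{i,j}^R(M)\ne 0$, we have $\Omega_k=0$ for $k<j$. Hence $N$ is generated in degree $j$ by $b$ minimal generators, and $N_j=\Omega_j$ has dimension $b$. By Remark~\ref{rem:generators-up-to-degree-j}, applied to $\Omega$ with $i=1$, the first syzygies of $\Omega$ and $N$ agree up to degree $j+1$. Thus $\beta_{i+1,j+1}^R(M)=\beta_{1,j+1}^R(\Omega)=\beta_{1,j+1}^R(N)$. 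It therefore suffices to prove the inequality and the equality criterion for $N$, which is generated in the single degree $j$.

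For $N$, write $0\to K\to R(-j)^b\to N\to 0$ with $K=\Omega_1^R(N)$. The module $K$ lives in degrees $\ge j+1$, so $\beta_{1,j+1}^R(N)=\dim_{\sk}K_{j+1}$, since nothing of lower degree can generate elements of $K_{j+1}$. From the exact sequence in degree $j+1$ we get
\[\dim K_{j+1}=nb-\dim N_{j+1}\le nb,\]
because $\dim R(-j)^b_{j+1}=b\dim R_1=nb$. This gives the inequality. The same bound also follows directly from Remark~\ref{rem:directSumofR/m^j} applied to $N(j)$ with exponent $1$, where $h_R(1)=n$.

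For the equality case, note that $\beta_{1,j+1}^R(N)=nb$ holds if and only if $N_{j+1}=0$. Since $N$ is generated in degree $j$, this happens exactly when $\m N=0$, that is, when $N\simeq \sk(-j)^b$. Equivalently, Remark~\ref{rem:directSumofR/m^j} gives equality if and only if $N(j)$ is a direct sum of copies of $R/\m$.

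The main thing to check carefully is the identification $\beta_{i+1,j+1}^R(M)=\beta_{1,j+1}^R(N)$, which needs the right index shift in Remark~\ref{rem:generators-up-to-degree-j}. One could worry that relations in degree $j+1$ among the degree-$j$ generators involve generators of $\Omega$ of degree $j+1$. They do not: such generators contribute only to $\Omega_{j+1}$ and cannot appear in a linear relation among the degree-$j$ generators, because $\Omega_k=0$ for $k<j$. So the minimal relations of degree $j+1$ are exactly the linear syzygies among the degree-$j$ generators, which is what the remark provides.
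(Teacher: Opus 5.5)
Your proof is correct and follows the paper's argument: reduce to $\Omega^{(j)}$ via Remark~\ref{rem:generators-up-to-degree-j}, then apply the degree-one case of Remark~\ref{rem:directSumofR/m^j} to the shift $\Omega^{(j)}(j)$, which you also unpack as the elementary count $\dim_{\sk} K_{j+1}=nb-\dim_{\sk} N_{j+1}$. One small point: in your last paragraph, what actually keeps the degree-$(j+1)$ generators of $\Omega$ out of the degree-$(j+1)$ relations is their minimality (they are linearly independent modulo $\m\Omega$, while $R_1\Omega_j\subseteq\m\Omega$), not the vanishing of $\Omega_k$ for $k<j$ --- though that paragraph is redundant anyway, since the cited remark already gives $\beta_{1,j+1}^R(\Omega)=\beta_{1,j+1}^R(N)$.
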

\begin{proof} Note that $\beta_{i+1,j+1}^R(M)=\beta_{1,j+1}^R(\Omega)=\beta_{1,j+1}^R(\Omega^{(j)})$, where the last equality follows from Remark~\ref{rem:generators-up-to-degree-j}. Since $\beta_{i,k}^R(M)=0$ for all $k<j$, we see that $\Omega^{(j)}$ is generated in degree $j$, and hence the shifted module $\Omega^{(j)}(j)$ is generated in degree $0$. Therefore we get $\beta_{1,1}^R(\Omega^{(j)}(j))\leq n\beta_{0,0}^R(\Omega^{(j)}(j))$ and equality holds if and only if $\Omega^{(j)}(j)\simeq \sk^{\oplus\beta_{1,1}^R(\Omega^{(j)}(j))}$, by Remark~\ref{rem:directSumofR/m^j}. Hence the proof is complete, since $\beta_{i,k}^R(\Omega^{(j)}(j))=\beta_{i,k+j}^R(\Omega^{(j)})$ for all $i,k\in \BZ$, and $\beta_{i+1,j+1}^R(M)=\beta_{1,j+1}^R(\Omega^{(j)})$. 
\end{proof}

\begin{lemma}\label{mainLemma}
Let $R$ be a standard graded $\sk$-algebra with $\mu(\m)=n$ and $M$ an $R$-module generated in degree $0$. Suppose $\beta_{1,j}^R(M)=0$ for all $j<s$ and $\beta_{2,s+1}^R(M)=n\beta_{1,s}^R(M)\neq 0$. Let ${\bf d}=(0,s,s+1,d_3,d_4,\ldots)$ be a degree sequence and $M'$ a pure $R$-module of type ${\bf d}$. If $\beta^R(M)-c\beta^R(M')\in\mathbb{B}_{\BQ}(R)$ for some $c\in \BQ_{>0}$, then $\Omega^R_1(M')\simeq \sk(-s)^{\oplus\beta^R_{1,s}(M')}$.
\end{lemma}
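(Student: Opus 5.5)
The plan is to write $\beta^R(M)-c\beta^R(M')=c'\beta^R(N)$ for some $R$-module $N$ and $c'\in\BQ_{\geq 0}$, using Remark~\ref{bettiTableremark}(i). We will then exploit the extremal inequality of Lemma~\ref{syzygyLemma} at homological degree $1$ for each of $M$, $M'$ and $N$: equality for $M$ will force equality for $M'$. Once equality holds for $M'$, Lemma~\ref{syzygyLemma} gives $\Omega_1^R(M')^{(s)}\simeq\sk(-s)^{\beta_{1,s}^R(M')}$. Since $M'$ is pure of type ${\bf d}$, $\Omega_1^R(M')$ is generated in degree $s$, so $\Omega_1^R(M')^{(s)}=\Omega_1^R(M')$, which is the claim.

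First I will record the vanishing needed to apply Lemma~\ref{syzygyLemma}. All Betti numbers of $M'$ and $N$ are nonnegative, and entrywise we have $c\beta^R(M')+c'\beta^R(N)=\beta^R(M)$. Hence $\beta_{1,j}^R(N)=0$ for all $j<s$, because $\beta_{1,j}^R(M)=0$ there. For $M'$ the same vanishing holds by purity, and $\beta_{1,s}^R(M')\neq 0$. Thus $s$ is $\leq$ the smallest degree $j$ with $\beta_{1,j}\neq 0$ for each of $M'$ and $N$.

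Next I apply Lemma~\ref{syzygyLemma} with $i=1$ to $M'$, where $j=s$, which gives $\beta_{2,s+1}^R(M')\leq n\beta_{1,s}^R(M')$. For $N$ a small case split is needed. If $\beta_{1,s}^R(N)\neq0$, the lemma gives $\beta_{2,s+1}^R(N)\leq n\beta_{1,s}^R(N)$. If $\beta_{1,s}^R(N)=0$, then the minimal generators of $\Omega_1^R(N)$ have degree $\geq s+1$, so $\Omega_2^R(N)$ lives in degrees $\geq s+2$, and $\beta_{2,s+1}^R(N)=0$. In either case $\beta_{2,s+1}^R(N)\leq n\beta_{1,s}^R(N)$.

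Finally I combine these bounds:
\[
n\beta_{1,s}^R(M)=\beta_{2,s+1}^R(M)=c\beta_{2,s+1}^R(M')+c'\beta_{2,s+1}^R(N)\leq n\bigl(c\beta_{1,s}^R(M')+c'\beta_{1,s}^R(N)\bigr)=n\beta_{1,s}^R(M).
\]
So equality holds throughout. Since $c>0$, this forces $\beta_{2,s+1}^R(M')=n\beta_{1,s}^R(M')$, and the equality case of Lemma~\ref{syzygyLemma} finishes the proof. The only delicate point is the case analysis for $N$: I need the inequality even when the minimal degree of $\Omega_1^R(N)$ is not $s$, and the hypothesis $\mu(\m)=n$ is used to make the bound uniform. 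I expect nothing else to be hard.
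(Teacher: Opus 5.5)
Your proof is correct and follows essentially the same route as the paper: you write the difference as a nonnegative multiple of a Betti table, apply Lemma~\ref{syzygyLemma} in homological degree $1$ to both $M'$ and the difference, squeeze to obtain $\beta_{2,s+1}^R(M')=n\beta_{1,s}^R(M')$, and then invoke the equality case. Your explicit case split when $\beta_{1,s}^R(N)=0$ fills in a detail the paper leaves implicit, and using purity to identify $\Omega_1^R(M')^{(s)}$ with $\Omega_1^R(M')$ is equivalent to the paper's direct appeal to Remark~\ref{rem:directSumofR/m^j}.
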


\begin{proof}
Since $\beta^R_{1,j}(M')=0$ for $j<s$, by Lemma \ref{syzygyLemma}, we know that $\beta^R_{2,s+1}(M')\leq n\beta^R_{1,s}(M')$. Therefore, we have $$\beta^R_{2,s+1}(M)-c\beta^R_{2,s+1}(M')=n\beta^R_{1,s}(M)-c\beta^R_{2,s+1}(M')\geq n\left(\beta^R_{1,s}(M)-c\beta^R_{1,s}(M')\right).$$
On the other hand, since $\beta^R(M)-c\beta^R(M')\in\mathbb{B}_{\BQ}(R)$ and $\beta^R_{1,j}(M)-c\beta^R_{1,j}(M')=0$ for all $j<s$, by Remark \ref{bettiTableremark} and Lemma \ref{syzygyLemma}, we get
$\beta^R_{2,s+1}(M)-c\beta^R_{2,s+1}(M')\leq n\left(\beta^R_{1,s}(M)-c\beta^R_{1,s}(M')\right).$  Thus, we see that  $\beta^R_{2,s+1}(M)-c\beta^R_{2,s+1}(M')= n\left(\beta^R_{1,s}(M)-c\beta^R_{1,s}(M')\right)$, and hence we get  
$\beta^R_{2,s+1}(M')=n\beta^R_{1,s}(M')$. Thus, by Remark~\ref{rem:directSumofR/m^j}, we get $\Omega_1^R(M')\simeq \sk(-s)^{\oplus\beta^R_{1,s}(M')}$.
\end{proof}

Recall that an Artinian standard graded $\sk$-algebra $R$ is said to be \emph{level} if its socle $\langle 0\rangle \colon_R \m$ is concentrated in a single degree. A Cohen--Macaulay $\sk$-algebra $R$ of dimension $d$ is said to be \emph{level} if some (and hence all) its Artinian reductions $R/\langle x_1, \ldots, x_d\rangle$ by linear forms $x_1, \ldots, x_d$ is an Artinian level algebra. 

\begin{thm}\label{thm:depthZeroImpliesArtinianLevel}
Let $R$ be a standard graded $\sk$-algebra such that $\depth(R)=0$. If $\mathbb B_{\mathbb Q}(R)=\mathbb B _{\mathbb Q}^{\rm pure}(R)$, then $R$ is Artinian and level.
\end{thm}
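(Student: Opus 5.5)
The plan is to exploit the socle of $R$, which is nonzero since $\depth(R)=0$, by building test modules whose first syzygy contains copies of $\sk$. The decomposition hypothesis $\mathbb B_{\mathbb Q}(R)=\mathbb B_{\mathbb Q}^{\rm pure}(R)$, combined with Lemma~\ref{mainLemma}, should then pin down the pure summands completely. By Theorem~\ref{thm:purityImpliesKoszul} we may assume throughout that $R$ is Koszul, hence $\sk$ has a linear (pure) resolution. Consequently, for $x\in\soc(R)_s$ the module $R/\langle x\rangle$ has $\Omega_1\simeq\sk(-s)$. Its Betti table is $\beta(R)$ in column $0$ followed by the shifted table of $\sk(-s)$, so $\beta_{2,s+1}=n\beta_{1,s}$, and Lemma~\ref{mainLemma} applies.

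\emph{Step 1 (the pure summands).} Let $M$ be generated in degree $0$, and suppose its first syzygy is a direct sum of a socle part $\sk(-s)^{a}$ (with no generators in degree $<s$) and further pieces in degrees $\ge s+2$. Write $\beta^R(M)=\sum c_i\beta^R(M_i)$ with the $M_i$ pure of pairwise distinct types, as in Remark~\ref{bettiTableremark}. Every $M_i$ must then be generated in degree $0$, and its first-column degree is one of the degrees occurring in $\beta_1(M)$. The summands whose type begins $(0,s,\dots)$ must have second entry $s+1$, because $\beta_{2,s+1}(M)\ne0$ forces some summand there and purity leaves no other option. Lemma~\ref{mainLemma} then gives $\Omega_1(M_i)\simeq\sk(-s)^{a_i}$, i.e.\ $M_i\simeq R^{b_i}/\sk(-s)^{a_i}$. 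The Hilbert series of such a summand is $b_iH_R(z)-a_iz^s$.

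\emph{Step 2 (level).} Suppose the socle has nonzero elements $x,y$ in degrees $s<u$. Take $M=R/\langle x,y\rangle$, so that $\Omega_1(M)=\sk(-s)\oplus\sk(-u)$ and $\beta(M)$ is $\beta(R)$ followed by $\beta(\sk(-s))+\beta(\sk(-u))$. Step~1 handles the type-$(0,s,s+1,\dots)$ summands. For the type-$(0,u,\dots)$ summands, the second entry must be $s+1$ or $u+1$; I would rule out $s+1$ by purity together with the degree bound from Lemma~\ref{syzygyLemma}. Then Lemma~\ref{mainLemma}, applied to $M$ after splitting off the first family (or applied in the truncated form coming from Remark~\ref{rem:generators-up-to-degree-j}), gives summands of the form $R^{b}/\sk(-u)^{a}$. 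Summing the contributions in columns $0$ and $1$ gives $\sum c_ib_i=1$ and $\sum c_ia_i=1$ for each family. The goal is then to compare Hilbert series, or column $2$ (where $M$ has $n$ in degrees $s+1$ and $u+1$), and show the two families cannot have total column-$0$ weight $1$. I expect this bookkeeping to force a contradiction, which would make the socle concentrated in one degree.

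\emph{Step 3 (Artinian).} If $\dim R\ge1$, choose $x\in\soc(R)_s$ and a linear form $l$ with $l^t\ne0$ for all $t$. Take $M=R/\langle x,l^t\rangle$ with $t\gg s+1$, so that $\Omega_1(M)^{(s)}=\sk x$ and Step~1 applies to the degree-$s$ part. The summands of type $(0,s,s+1,\dots)$ have Hilbert series $b_iH_R-a_iz^s$, hence multiplicity $b_ie(R)$ in dimension $\dim R$. The remaining summands have first syzygy generated in degree $t$. Comparing multiplicities (the $z$-coefficients at the pole of order $\dim R$) of $H_M$ with the weighted sum should give a contradiction once one shows those degree-$t$ summands cannot compensate. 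The natural way to show this is to compare $e(M)$, which is strictly less than $e(R)$ because $l^t$ cuts down dimension or multiplicity, with the fact that column $0$ has total weight $1$. This multiplicity comparison, together with the analogous bookkeeping in Step~2, is the main obstacle. In both steps I would first try to make everything follow from the Hilbert-series identity $H_M=\sum c_iH_{M_i}$, which holds because Hilbert series are determined by Betti tables over $R$.
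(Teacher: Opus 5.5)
Your Step~2 fails as designed, so levelness is not proved. Grant the Step~1 analysis in both degrees $s$ and $u$. Then every non-free summand in a decomposition of $\beta^R(R/\langle x,y\rangle)$ is $R^{b_i}$ modulo a copy of $\sk(-s)^{a_i}$ or of $\sk(-u)^{a_i}$. The Hilbert series and every column of the Betti table then agree automatically once $c_0+\sum c_ib_i=1$ and $\sum_{\delta_1=s}c_ia_i=\sum_{\delta_1=u}c_ia_i=1$, so your proposed bookkeeping has no contradiction in it.

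In fact there is no contradiction to find. Set $\sigma_s=\dim_{\sk}\soc(R)_s$ and $\sigma_u=\dim_{\sk}\soc(R)_u$, and suppose both are at least $2$ (this happens for suitable quadratic monomial algebras). Then
\[
\beta^R(R/\langle x,y\rangle)=\tfrac{1}{\sigma_s}\,\beta^R\bigl(R/\soc(R)_s\bigr)+\tfrac{1}{\sigma_u}\,\beta^R\bigl(R/\soc(R)_u\bigr)+\Bigl(1-\tfrac{1}{\sigma_s}-\tfrac{1}{\sigma_u}\Bigr)\beta^R(R).
\]
This is a nonnegative combination of pure Betti tables, because $R$ is Koszul and the first syzygies $\sk(-s)^{\sigma_s}$ and $\sk(-u)^{\sigma_u}$ resolve linearly. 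So $R/\langle x,y\rangle$ cannot witness $\mathbb B_{\mathbb Q}(R)\neq\mathbb B_{\mathbb Q}^{\rm pure}(R)$. The only genuine constraint is $a_i\le\sigma_s b_i$ (resp.\ $a_i\le\sigma_u b_i$), which comes from $\Omega_1(M_i)\subseteq\soc(R^{b_i})$. It bites only if you kill enough of the socle, e.g.\ all of $\soc(R)_s+\soc(R)_u$ instead of just $x$ and $y$.

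Step~3 is also unfinished. Comparing ``$e(M)<e(R)$'' with column-$0$ weight $1$ is not enough, since the column-$0$ weight of the type-$(0,s,s+1,\dots)$ summands is only bounded below by $1/\sigma_s$. Also, $l^t\neq0$ for all $t$ does not ensure $\dim M<\dim R$. The argument does work if you choose $l$ outside every minimal prime $P$ with $\dim R/P=\dim R$. Then $\dim M<\dim R$, so $H_M=\sum c_iH_{M_i}$ with $c_i>0$ forces every $M_i$ to have dimension $<\dim R$. But a summand $R^{b_i}/\sk(-s)^{a_i}$ has dimension $\dim R\ge1$, a contradiction.

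The paper avoids both problems with a single finite-length test module. For $0\neq x\in\soc(R)_s$ it takes $M=R/(\langle x\rangle+\m^{s+1})$. If $\m^{s+1}\neq0$, Lemma~\ref{mainLemma} produces a summand with $H_{M_i}(z)=b_iH_R(z)-a_iz^s$, whose $z^{s+1}$-coefficient $b_i\dim_{\sk}R_{s+1}$ is positive. But every $H_{M_i}$ must be a polynomial of degree at most $s$, since $H_M$ is and all coefficients are nonnegative. Hence $\m^{s+1}=0$ for \emph{every} socle degree $s$. This gives Artinian. Applied to a socle element of degree $t$ below the top degree $s_0$, it gives $\m^{t+1}=0$, contradicting $\m^{s_0}\neq0$. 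So levelness comes for free and your Step~2 is unnecessary.

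A minor point on Step~1: purity alone does not force $\delta_2=s+1$ for every summand beginning $(0,s,\dots)$. That follows from $\beta_{2,s+1}(M)=n\beta_{1,s}(M)$ together with Lemma~\ref{syzygyLemma} summed over the summands, and in any case you only need one such summand.
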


\begin{proof} 
Let $0\neq x\in\soc(R)$, with $\deg(x)=s$. We first claim that $\m^{s+1}=0$. Suppose this is not true. Let $M=R/(\langle x\rangle+\m^{s+1})$. Since $\Omega_1^R(M)\simeq \sk_j(-s)\oplus \m^{s+1}$, we get the following Betti table
$$\beta^R(M)=\begin{tabular}{|c |ccccc| }
\hline
\backslashbox{$j$}{$i$}& 0 & 1 & 2 &3&$\cdots$\\
\hline
0 & 1 & - & -&-&$\cdots$\\
$\vdots$ &$\vdots$ &$\vdots$ &$\vdots$ &$\vdots$&$\vdots$ \\
s-1 & - & 1 & $\beta_{1,1}^R(\sk)$ & $\beta_{2,2}^R(\sk)$&$\cdots$\\
s & - & $\mu(\m^{s+1})$ & $*$ & $*$&$\cdots$\\
\vdots&-&-&$\vdots$&\vdots & $\ddots$\\
\hline
\end{tabular}\ .$$ 
	
We now show that $\beta^R(M)$ cannot be written as a positive $\BQ$-linear combination of Betti tables of pure $R$-modules. Suppose there exists $R$-modules $M_i$ and $c_i\in\BQ_{>0}$ such that $\beta^R(M)=\sum_ic_i\beta^R(M_i)$. Therefore, we get $H_M(z)=\sum_ic_iH_{M_i}(z)$. Since $\deg(H_M(z))=s$, and $c_i>0$, we see that $H_{M_i}(z)$ is a polynomial with $H_{M_i}(z)\leq s$ for all $i$.
	
Observe that $\beta_{2,s+1}^R(M)= \beta^R_{1,1}(\sk)\neq 0$ and $\beta_{3,s+2}^R(M)= \beta^R_{2,2}(\sk)\neq 0$. Hence there exists some $i$
such that $M_i$ has a pure resolution of type $(0,s,s+1,s+2,d_4,d_5,\dots)$. By Lemma \ref{mainLemma}, we get $\Omega_1^R(M_i)\simeq \sk(-s)^{\oplus\beta_{1,s}^R(M_i)}$. 
This implies that $H_{\Omega_1^R(M_i)}(z)=\beta_{1,s}^R(M_i)z^s$.	By the additivity of the Hilbert series, we get	
$$H_{M_i}(z)-\beta^R_{0,0}(M_i)H_{R}(z)-\beta^R_{1,s}(M_i)z^s=0.$$  Since $\deg(H_{M_i}(z))\leq s$, $\deg(H_R(z))>s$, by comparing the coefficient of $z^{s+1}$, we get $\beta^R_{0,0}(M_i)=0$ which is a contradiction to $M_i\neq 0$. This contradiction shows that $\m^{s+1}=0$, and hence $R$ is Artinian. Moreover, we have $0\neq \m^s\subset \soc(R)$. On the other hand, if $0\neq y\in \soc(R)$ with $\deg(y)=t<s$,  then the same argument shows that $\m^{t+1}=0$. Thus, $t\geq s$ proving the $R$ is level. 
\end{proof}

The following example shows that a Koszulness is not sufficient for the equality $\mathbb B_{\mathbb Q}(R)=\mathbb B _{\mathbb Q}^{\rm pure}(R)$ to hold. 
\begin{example}
Let $R=\sk[X,Y,Z]/\langle X^2,Y^2,Z^2,XZ,YZ\rangle$. Then $R$ is Koszul since it is a quadratic monomial algebra (see \cite[Remark 6(2)]{CDR13}). Since $\ov{Z}$ and $\ov{XY}$ are nonzero elements in socle of $R$. By the theorem above, we see that $\mathbb B_{\mathbb Q}(R)\neq \mathbb B _{\mathbb Q}^{\rm pure}(R)$.
\end{example}

There are no known examples of non-Cohen--Macaulay algebras $R$ satisfying $\BB_{\mathbb{Q}}(R)=\BB^{\rm{pure}}_{\mathbb{Q}}(R)$. Theorem~\ref{thm:depthZeroImpliesArtinianLevel} motivates us to propose the following.

\begin{conjecture}
    Let $R$ be a standard graded $\sk$-algebra such that $\BB_{\mathbb{Q}}(R)=\BB^{\rm{pure}}_{\mathbb{Q}}(R)$. Then $R$ is a Cohen--Macaulay level algebra.
\end{conjecture}

\subsection{Rings with Pairs of Exact Zerodivisors}
\label{HVector}
\hfill{}\\
\vspace{-10pt}

\begin{defn}{\rm
Let $R$ be a standard graded $\sk$-algebra and $a,b\in R_1$. We say that $\{a,b\}$ is a \emph{pair of linear exact zerodivisors} if $\langle 0 \rangle :_R \langle a \rangle =\langle b\rangle$ and $\langle 0 \rangle :_R \langle b \rangle =\langle a\rangle$.\index{pair of linear exact zerodivisors}
}\end{defn}  

\begin{rmk}\label{linearBetti}{\rm Let $R$ be a standard graded $\sk$-algebra with $H_R(z)=\dfrac{f(z)}{(1-z)^d}$. 
\begin{enumerate}[{\rm (a)}]
    \item If $R$ has a pair $\{a,b\}$ of linear exact zerodivisors, then $\beta^R_{i,i}(R/\langle a\rangle)=1$ for all $i$, and hence $\mP_{(R/\langle a\rangle)}(z)=1/(1-z)$.  Observe that, by Remark~\ref{rmk:Hilbert-series-etc}(c), we get that $(1+z)$ divides $f(z)$.
    \item  If $R$ is a generic Gorenstein Artin standard graded $\mathsf k$-algebra of socle degree $3$, then $R$ has a pair of linear exact zerodivisors (see \cite[Remark 4.3]{HS11}).
\end{enumerate}
}\end{rmk}
\begin{thm}\label{notSufficient} Let $R$ be a standard graded $\sk$-algebra with $H_R(z)={f(z)}/{(1-z)^d}$, where $\dim(R)=d$. Suppose $\mathbb B_{\mathbb Q}(R)= \mathbb B_{\mathbb Q}^{\rm pure}(R)$. If $R$ has a pair of linear exact zerodivisors, then $\deg(f(z))\leq2$.
\end{thm}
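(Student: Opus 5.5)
The plan is to argue by contradiction: assume $\deg f(z)\geq 3$ and build a test module $M$ from the exact zerodivisor $a$ whose Betti table cannot be a nonnegative combination of pure tables. By Theorem~\ref{thm:purityImpliesKoszul}, $R$ is Koszul. By Remark~\ref{linearBetti}(a), $R/\langle a\rangle$ has the linear resolution
\[\cdots\xrightarrow{a}R(-2)\xrightarrow{b}R(-1)\xrightarrow{a}R,\]
with $\mathcal P_{R/\langle a\rangle}(z)=1/(1-z)$ and $(1+z)\mid f(z)$. By Theorem~\ref{thm:good}, every $R$-module has rational Poincar\'e series with denominator $f(-z)$.

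\textbf{Test module.} I would take $M=R/(\langle a\rangle+\m^2)$, or a variant such as $R/(\langle a\rangle+b\m)$, chosen so that $\Omega_1^R(M)$ is generated in two consecutive degrees. For $M=R/(\langle a\rangle+\m^2)$ the exact sequence
\[0\to \sk(-1)^{n-1}\to M\to \sk\to 0\]
gives the Betti table of $M$ in terms of that of $\sk$, which is linear with Poincar\'e series $(1+z)^d/f(-z)$. In particular $\beta^R(M)$ has exactly two nonzero rows. The degree-$1$ generator $a$ of $\Omega_1^R(M)$ contributes the periodic strand of $R/\langle a\rangle$, and the degree-$2$ generators contribute a strand coming from $\sk(-1)^{n-1}$.

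\textbf{Decomposition and the squeeze.} Write $\beta^R(M)=\sum c_i\beta^R(M_i)$ with the $M_i$ pure of distinct types, as in Remark~\ref{bettiTableremark}. Since $\reg$ is finite and the table has only two rows, each $M_i$ is generated in degree $0$ and has type $(0,1,2,3,\ldots)$, or type $(0,2,3,4,\ldots)$, or a type that jumps once from the lower row to the upper row at some homological degree $k$.
\begin{itemize}
\item[(i)] For modules of type $(0,2,3,\ldots)$, I would use the squeeze of Lemma~\ref{mainLemma} together with Lemma~\ref{syzygyLemma} and Remark~\ref{rem:directSumofR/m^j}. Comparing $\beta_{2,3}$ with $n\beta_{1,2}$ should force $\Omega_1^R(M_i)\simeq\sk(-2)^{\oplus}$ on the upper strand.
\item[(ii)] For the linear pieces, Remark~\ref{rmk:Hilbert-series-etc}(c) gives $H_{M_i}(z)=H_R(z)\mathcal P_{M_i}(-z)$. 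Since $\beta_{0}$ and $\beta_{1,1}$ of $M$ are both $1$, a linear $M_i$ has $H_{M_i}$ starting $1\cdot(1-z)+\cdots$, like $R/\langle a\rangle$. So $M_i$ is forced to be the cyclic module $R/\langle \ell\rangle$ for a linear form $\ell$ whose annihilator is principal, with $\mathcal P_{M_i}(z)=1/(1-z)$ up to the allowed corrections.
\end{itemize}

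\textbf{Hilbert series comparison.} Summing the Hilbert series gives
\[H_M(z)=1+(n-1)z=\sum_i c_iH_{M_i}(z).\]
The upper-row summands are of the form $\beta_0(1+nz)-\beta_{1,2}z^2\cdot(\text{stuff})$, which pins down their Hilbert series. The linear summands contribute $H_R(z)\mathcal P_{M_i}(-z)$, and $\mathcal P_{M_i}(-z)$ has denominator $f(z)$ with numerator constrained by the few leading Betti numbers. Matching the coefficients of $z^2$ and $z^3$ should then give the contradiction. When $\deg f\geq 3$, the coefficient $h_3$ of $f$ enters the $z^3$-coefficient with a sign that cannot be cancelled by nonnegative $c_i$ and the pinned-down upper-row summands. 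When $\deg f\leq 2$ no such coefficient appears, in line with Theorem~\ref{dim0and1extremals}.

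\textbf{Main obstacle.} The step I expect to be hardest is controlling the pure modules that jump from the lower row to the upper row at some homological degree $k\geq 2$. Lemma~\ref{mainLemma} only controls the first jump. My first attempt would be to apply Lemma~\ref{syzygyLemma} to $\Omega_k^R(M_i)$ at the jump point and iterate the squeeze there. Failing that, I would use the common-denominator statement of Theorem~\ref{thm:good}: compare the pole structure at the roots of $f(-z)$ in $\mathcal P_M$ with that in $\sum c_i\mathcal P_{M_i}$, and use $\deg f\geq 3$ to exhibit a root that the right-hand side cannot produce with the required multiplicity.
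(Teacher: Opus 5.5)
Your proposal has a genuine gap. The step that is supposed to produce the contradiction (``matching the coefficients of $z^2$ and $z^3$ should then give the contradiction'') is never carried out, and the two claims meant to support it do not hold.

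\begin{itemize}
\item Lemma~\ref{mainLemma} does not apply to $M=R/(\langle a\rangle+\m^2)$. It needs $\beta^R_{1,j}(M)=0$ for $j<s$ and $\beta^R_{2,s+1}(M)=n\beta^R_{1,s}(M)\neq0$. Here $\beta^R_{1,1}(M)=1$ rules out $s\geq 2$, and $s=1$ would require $\beta^R_{2,2}(M)=n\geq2$, whereas $\beta^R_{2,2}(M)=1$.
\item The values $\beta^R_{0,0}(M)=\beta^R_{1,1}(M)=1$ are shared among all summands, including types such as $(0,1,3,\ldots)$ and types of finite projective dimension. So they do not force a linear summand to be a cyclic $R/\langle\ell\rangle$ with Poincar\'e series $1/(1-z)$.
\item The pole-comparison fallback is empty as stated. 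The identity $\mathcal P_M=\sum c_i\mathcal P_{M_i}$ holds automatically once the Betti tables agree, so information can only come from isolating a single summand and bounding it.
\end{itemize}

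That isolation is the missing idea, and it also removes your ``main obstacle.''

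Because the types are distinct, there is a unique summand $M_j$ of type $(0,1,2,\ldots)$. It is the only summand contributing to $\beta_{i,i}$ for $i\gg0$. By Lemma~\ref{exactZD}, $c_j\beta^R_{i,i}(M_j)=1$ for $i\gg0$, so $\mathcal P_{M_j}(-z)=h(z)/(1+z)$ with $h(-1)\neq0$.

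All other summands enter only through positivity. Since $H_M(z)=1+(n-1)z=\sum c_iH_{M_i}(z)$ with $c_i>0$ and nonnegative Hilbert functions, every $H_{M_i}$ is a polynomial of degree at most one; in particular $H_{M_j}(z)=r+sz$.

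Remark~\ref{rmk:Hilbert-series-etc}(c) then gives $\mathcal P_{M_j}(-z)=(r+sz)(1-z)^d/f(z)$, hence $f(z)h(z)=(r+sz)(1-z)^d(1+z)$. Since $f(1)=e(R)\neq0$, we get $(1-z)^d\mid h(z)$ and therefore $f(z)\mid(r+sz)(1+z)$, so $\deg f\le 2$ directly.

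The bound $\deg H_{M_j}\le 1$ is the crux: without it the same divisibility argument gives no restriction on $\deg f$. No argument by contradiction, no squeeze, no control of the types that jump rows, and no appeal to Theorem~\ref{thm:good} is needed.
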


We prepare the proof of Theorem \ref{notSufficient} with a few lemmas as follows.

\begin{lemma}\label{exactZD}
Let $R$ be a standard graded $\sk$-algebra, $\{a,b\}$ a pair of linear exact zerodivisors and $M=\dfrac{R}{\langle a\rangle +\m^2}$.  Then $\beta^R_{i,i}(M)=1$ for all $i\geq 0$.
\end{lemma}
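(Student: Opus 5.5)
Since $\{a,b\}$ is a pair of linear exact zerodivisors, the periodic complex
\[\cdots \xrightarrow{\,b\,} R(-2)\xrightarrow{\,a\,} R(-1)\xrightarrow{\,b\,} R \xrightarrow{\,a\,}\cdots\]
is exact. Consequently $R/\langle a\rangle$ has minimal free resolution $\cdots\to R(-2)\xrightarrow{a}R(-1)\xrightarrow{b}R\to R/\langle a\rangle\to 0$, up to the order of $a,b$; this is already noted in Remark~\ref{linearBetti}(a). Moreover $\Omega_i^R(R/\langle a\rangle)$ is isomorphic to $\langle a\rangle(-i+1)$ or $\langle b\rangle(-i+1)$, and each of these is isomorphic to $R/\langle b\rangle(-i)$ or $R/\langle a\rangle(-i)$ respectively. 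My plan is to compare $M=R/(\langle a\rangle+\m^2)$ with $N=R/\langle a\rangle$ via the short exact sequence
\[0\to \m^2N \to N \to M\to 0,\]
and to show that $M$ has a linear resolution with each free module of rank one.

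The approach is to compute syzygies directly. First, $\Omega_1^R(M)=(\langle a\rangle+\m^2)/\ldots$, more precisely the ideal $J=\langle a\rangle+\m^2$. Its minimal generators are $a$ in degree $1$ together with the generators of $\m^2$ not in $a\m$. So the claim $\beta_{1,1}=1$ is clear, but we must also show that $\beta_{1,j}=0$ for $j\ge 2$. That would force $\m^2\subseteq a\m$, which is false in general. So I reinterpret the statement as concerning only the \emph{diagonal} Betti numbers $\beta_{i,i}$, with no claim of linearity. The target is then $\beta_{i,i}^R(M)=1$ for all $i$, and the strategy is to use Remark~\ref{rem:generators-up-to-degree-j}. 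By Remark~\ref{rem:generators-up-to-degree-j}, $\beta_{i,i}$ depends only on the lowest-degree strand, so I will track the linear strand.

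The key steps are as follows. Let $\Omega_1=J$, so $J^{(1)}=\langle a\rangle$. By Remark~\ref{rem:generators-up-to-degree-j} with $j=1$, shifted appropriately (apply it to $\Omega_1(1)$, which is generated in degree $0$ and above), we get $\beta_{i,i}^R(M)=\beta_{i-1,i}^R(J)=\beta_{i-1,i}^R(J^{(1)})=\beta_{i-1,i}^R(\langle a\rangle)$ for $i\ge1$. Since $\langle a\rangle\cong R/\langle b\rangle(-1)$ has resolution $\cdots\xrightarrow{b}R(-3)\xrightarrow{a}R(-2)\xrightarrow{b}R(-1)$ with all ranks one and linear, we obtain $\beta_{i-1,i}(\langle a\rangle)=1$. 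Together with $\beta_{0,0}(M)=1$, this gives the result.

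The main obstacle is applying Remark~\ref{rem:generators-up-to-degree-j} correctly. It is stated for $M^{(j)}$ and syzygies of $M$, and I need it for the module $J$, whose lowest generator degree is $1$, with $k\le j=1$. I must check that the indexing $\beta_{i,k+i}$ with $k=1$ matches $\beta_{i-1,i}(J)$, which it does under $i\mapsto i-1$ and $k=1$. I also need the exactness of the pair to identify $\langle a\rangle\cong R/(0:a)(-1)=R/\langle b\rangle(-1)$.
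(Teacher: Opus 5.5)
Your final argument is correct and is the paper's proof: you pass to $\Omega_1(M)=\langle a\rangle+\m^2$, use Remark~\ref{rem:generators-up-to-degree-j} to replace it by its degree-one part $\langle a\rangle$, and read off $\beta_{i,i+1}(\langle a\rangle)=1$ from the periodic linear resolution given by multiplication by $b$ and $a$. The abandoned short exact sequence plan and the ``reinterpretation'' can be dropped, since the lemma only ever claimed the diagonal Betti numbers $\beta_{i,i}$ equal $1$, not that $M$ has a linear resolution.
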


\begin{proof}
Since $M$ is a cyclic module, generated in degree $0$ and $\ann_R(M)=\langle a\rangle +\m^2$, we have $\beta^R_{0,0}(M)=1$ and $\Omega^R_1(M)=\langle a\rangle +\m^2$.  We know that $\beta^R_{i,j}(\Omega_1^R(M))=\beta^R_{i+1,j}(M)$. Since $\deg(a)=1$ and $\deg(x)\geq 2$ for all nonzero $x\in \m^2$, by Remark \ref{rem:generators-up-to-degree-j}, it is enough to prove that $\beta^R_{i,i+1}(\langle a\rangle )=1$ for all $i\geq0$. But this follows from the fact that the minimal free resolution of $\langle a\rangle $ is of the form
$\cdots\overset{a}\longrightarrow R(-1)\overset{b}\longrightarrow R\overset{a}\longrightarrow \langle a \rangle \longrightarrow0.$
\end{proof}

\begin{lemma}\label{equalBetti}
Let $R$ be a standard graded $\sk$-algebra such that $\mathbb B_{\mathbb Q}(R)=\mathbb B _{\mathbb Q}^{\rm pure}(R)$. Suppose $M$ is an $R$-module generated in degree $0$ with $\beta^R_{i,i}(M)\neq0$ for all $i$. Then there exist $c\in\BQ_{>0}$, and a pure $R$-module $M'$ of type $(0,1,2,\dots)$ such that $\beta^R_{i,i}(M)=c\beta^R_{i,i}(M')$ for $i\gg0$.
\end{lemma}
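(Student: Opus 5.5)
Since $\mathbb B_{\mathbb Q}(R)=\mathbb B_{\mathbb Q}^{\rm pure}(R)$, Remark~\ref{bettiTableremark}(ii) lets us write
\[
\beta^R(M)=\sum_{k=1}^r c_k\beta^R(M_k),
\]
with $c_k\in\BQ_{>0}$ and pure modules $M_k$ of pairwise distinct types ${\bf d}_k$. Since there are finitely many summands, we can compare the entries of this sum position by position.

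The first step is to locate the summands. Because $M$ is generated in degree $0$, we have $\beta^R_{0,j}(M)=0$ for $j\neq0$, so every $M_k$ is generated in degree $0$, i.e.\ $\delta^{(k)}_0=0$. For pure modules the degree sequence is strictly increasing, since minimality forces $\delta_{i+1}\geq\delta_i+1$. Hence $\delta^{(k)}_i\ge i$ for all $i$ (with the convention $\infty$ beyond the projective dimension), and equality $\delta^{(k)}_i=i$ forces $\delta^{(k)}_l=l$ for all $l\le i$.

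Now comparing the $(i,i)$ entries gives
\[
\beta^R_{i,i}(M)=\sum_{k\,:\,\delta^{(k)}_i=i} c_k\,\beta^R_{i,i}(M_k).
\]
Let $K_i=\{k:\delta^{(k)}_i=i\}$. By the monotonicity just noted, $K_{i+1}\subseteq K_i$. Moreover $K_i\neq\emptyset$ for every $i$, because $\beta^R_{i,i}(M)\neq0$. A decreasing chain of nonempty subsets of the finite set $\{1,\dots,r\}$ stabilizes, so there is $N$ with $K_i=K$ for all $i\ge N$, and $K$ is nonempty. Each $k\in K$ then satisfies $\delta^{(k)}_i=i$ for all $i$, so $M_k$ has type $(0,1,2,\dots)$. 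Since the types are pairwise distinct, $K$ consists of a single index $k_0$. Taking $M'=M_{k_0}$ and $c=c_{k_0}$, we get $\beta^R_{i,i}(M)=c\,\beta^R_{i,i}(M')$ for all $i\ge N$.

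The main point to get right is this stabilization step. It needs the distinct-types normalization; without it, several summands with linear type could appear, but these could also simply be combined into a single direct sum. It also needs the strict increase of degree sequences for pure modules, including those of finite projective dimension, whose entries $\infty$ beyond the projective dimension exclude them from $K_i$ for large $i$. Both facts are routine, so the argument is short.
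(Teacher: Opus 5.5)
Your proof is correct and follows the same route as the paper: decompose $\beta^R(M)$ via Remark~\ref{bettiTableremark}(ii) into pure Betti tables of pairwise distinct types, then isolate the unique summand of type $(0,1,2,\dots)$, whose $(i,i)$ entries alone account for $\beta^R_{i,i}(M)$ for $i\gg0$. You also make explicit what the paper leaves implicit: degree sequences of pure modules strictly increase, and the nested nonempty sets $K_i$ stabilize, which together guarantee that such a summand exists and that every other summand has vanishing diagonal entries for large $i$.
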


\begin{proof}
 Remark~\ref{bettiTableremark}(ii) implies that $\beta^R(M)=\sum_{i=1}^{n} c_i\beta^R(M_i)$, where $c_i\in \BQ_{>0}$ and $M_i$'s are pure $R$-modules of type ${\bf d}_i$ with ${\bf d}_i\neq {\bf d}_j$ for $i\neq j$. Therefore, there is a unique $j$ for which $M_j$ has a pure resolution of type $(0,1,2,\ldots)$, and hence $\beta^R_{i,i}(M_k)=0$ for all $k\neq j$ and $i\gg0$. Thus, $M'=M_j$ is the desired module.
\end{proof}

\begin{proof}[Proof of Theorem \ref{notSufficient}]
Let $\{a,b\}$ be a pair of linear exact zerodivisors. Set $M=\frac{R}{\langle a\rangle +\m^2}$. By Lemma \ref{exactZD}, we get $\beta^R_{i,i}(M)=1$ for all $i\geq 0$. By hypothesis, there exist  pure $R$-modules $M_1, \ldots, M_n$ and $c_1, \ldots, c_n\in \BQ_{>0}$ such that $\beta^R(M)=\sum_{i=1}^{n} c_i\beta^R(M_i)$. 
By  Remark~\ref{bettiTableremark}(ii), we may assume that if $M_i$ is a pure $R$-module of type ${\bf d}_i$, then ${\bf d}_i\neq {\bf d}_j$ for all $i\neq j$.
By Lemma~\ref{equalBetti}, we see that for some $j$, $M_j$ has a linear resolution with $c_j\beta^R_{i,i}(M_j)=\beta^R_{i,i}(M)=1$ for $i\gg0$. 
	
Note that $H_M(z)=1+mz$ for some $m\in\BN$. Thus, from the equality $H_M(z)=\sum_{i=1}^{n} c_i H_{M_i}(z)$, we see that $H_{M_j}(z)=r+sz$ for some $r,s\in\BZ_{\geq0}$. Note that since $M_j \neq0$, we have $r+sz \neq 0$. Now, by  Remark~\ref{rmk:Hilbert-series-etc}(c), we have $$\mathcal P_{M_j}(-z)=\dfrac{H_{M_j}(z)}{H_R(z)}=\frac{(r+sz)(1-z)^d}{f(z)}.$$

Since $\beta_{i,i}^R(M_j)=1/c_j$ for $i\gg0$, we see that $P_{M_j}(-z)=\dfrac{h(z)}{1+z}$ with $h(-1)\neq 0$. Hence $\dfrac{h(z)}{1+z}= \dfrac{(r+sz)(1-z)^d}{f(z)}$. Thus, we get $f(z)h(z)=(r+sz)(1-z)^d(1+z)$. Since $(1-z)$ does not divide $f(z)$, we see that $(1-z)^d$ divides $h(z)$. Therefore,  $f(z)$ divides $(r+sz)(1+z)$, and hence $\deg(f(z))\leq 2$.
\end{proof}

\subsection{Generic Gorenstein Algebras}
\hfill{}\\
\vspace{-10pt}

Our goal in this subsection is to characterize generic Gorenstein Artin standard graded $\sk$-algebras $R$ satisfying $\mathbb B_{\mathbb Q}(R)=\mathbb B_{\mathbb Q}^{{\rm{pure}}}(R)$. Let $R$ is Gorenstein Artin standard graded $\sk$-algebra with $\edim(R)=e\geq2$, and socle degree $s\geq 2$.

\begin{rmk}\label{rmk:Gorenstein-and-Koszul} 
We record some facts about the Koszul property of Gorenstein Artin algebras.
\begin{enumerate}[{\rm (a)}]
    \item If $R$ is a Koszul Gorenstein Artin $\sk$-algebra with $e=2$, then we have $s=2$. Indeed, if such an algebra $R=\sk[x,y]/I$ exists, then the fact that $R$ is Gorenstein forces its Hilbert series $H_R(z)=\sum_{i=0}^s h_iz^i$ to satisfy $h_i=h_{s-i}$ for all $i$. In particular, we have $s \geq 2$. Moreover, the Koszulness of $R$ implies that $I$ is a quadratic ideal. Since $R$ is Artinian, we have $\mu(I)\geq 2$, and hence $h_2\leq 1$. This forces $h_i\leq 1$ for all $i \geq 2$. Therefore, the equality $h_1=h_{s-1}=2$ forces $s=2$.  
    \item If $R$ is generic with $e \geq 3$ and $s=3$, then $R$ is Koszul (see \cite[Theorem 6.3]{CRV01}).
    \item If $R$ is generic with $e \geq 4$ and $s\geq 3$, then $R$ is not Koszul (see \cite[Corollary 3.6]{BNSR25}).
\end{enumerate}
\end{rmk}

In \cite{Gi13}, Gibbons proved that $\mathbb B_{\mathbb Q}(R)=\mathbb B_{\mathbb Q}^{{\rm{pure}}}(R)$ holds for the case $e\geq 2$ and $s=2$. 
Now, suppose that $R$ is generic with $e \geq 2, s\geq 3$. We claim that $\mathbb B_{\mathbb Q}(R)\neq \mathbb B_{\mathbb Q}^{{\rm{pure}}}(R)$. To see this, note that by Theorem \ref{thm:purityImpliesKoszul} and Remark \ref{rmk:Gorenstein-and-Koszul}, we must have $e \geq  3$ and $s=3$. Thus, by Remark \ref{linearBetti} and Theorem \ref{notSufficient}, it follows that $\mathbb B_{\mathbb Q}(R)\neq \mathbb B_{\mathbb Q}^{{\rm{pure}}}(R)$. We therefore have the following result.
\begin{thm}\label{thm:generic-Gorenstein}
    Let $R$ be a Gorenstein Artin standard graded $\mathsf k$-algebra with $\edim(R)\geq 2$ and socle degree $s$. Then  
    \begin{enumerate}[{\rm (a)}]
        \item If $s=2$, then $\mathbb B_{\mathbb Q}(R)= \mathbb B_{\mathbb Q}^{{\rm{pure}}}(R)$.
        \item If $s \geq 3$ and $R$ is generic, then $\mathbb B_{\mathbb Q}(R)\neq  \mathbb B_{\mathbb Q}^{{\rm{pure}}}(R)$.
    \end{enumerate}
\end{thm}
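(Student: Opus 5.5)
Part (a) is the case $s=2$, and here I would simply invoke Gibbons' result \cite{Gi13}. Alternatively, one can observe that a Gorenstein Artin algebra of socle degree $2$ and embedding dimension $e$ has $H_R(z)=1+ez+z^2$, so the statement is exactly the one established in \cite{Gi13} for short Gorenstein rings. Nothing beyond citing it should be needed.

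For part (b) I would argue by contradiction. Assume $s\geq 3$, $R$ is generic, and $\mathbb B_{\mathbb Q}(R)=\mathbb B_{\mathbb Q}^{\rm pure}(R)$.

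\textbf{Step 1: reduce to $e\geq 3$ and $s=3$.} Every extremal ray is then pure, so Theorem~\ref{thm:purityImpliesKoszul} makes $R$ Koszul. Since $\edim(R)\geq 2$, we use Remark~\ref{rmk:Gorenstein-and-Koszul}:
\begin{itemize}
\item if $e=2$, part (a) of that remark forces $s=2$, contradicting $s\geq 3$;
\item if $e\geq 4$ and $s\geq 3$, part (c) says a generic such $R$ is not Koszul, another contradiction.
\end{itemize}
For $e=3$ and $s\geq 4$, Koszulness of $R$ forces $R$ to be quadratic by Lemma~\ref{quadraticlemma}. I would then check that a generic Gorenstein algebra of codimension $3$ and socle degree $\geq 4$ is not quadratic: its Hilbert function $1,3,h_2,\dots$ is symmetric and unimodal, and compressed generic algebras with $s\geq 4$ have $h_2=6$, so there are no quadratic relations at all. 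Failing that, I would cite the appropriate non-Koszulness result for generic Gorenstein algebras. Either way, we are left with $e\geq 3$ and $s=3$.

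\textbf{Step 2: the case $s=3$.} By Remark~\ref{linearBetti}(b), a generic Gorenstein Artin algebra of socle degree $3$ has a pair of linear exact zerodivisors. Since $R$ is Artinian, $d=0$ and $f(z)=H_R(z)=1+ez+ez^2+z^3$, which has degree $3$. Theorem~\ref{notSufficient} then gives $\deg f\leq 2$, a contradiction. Hence $\mathbb B_{\mathbb Q}(R)\neq\mathbb B_{\mathbb Q}^{\rm pure}(R)$.

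The main obstacle is Step 1, specifically the gap $e=3$, $s\geq 4$, which the cited facts in Remark~\ref{rmk:Gorenstein-and-Koszul} do not explicitly cover. I would handle it through the quadratic-relations count above, which rests on generic Gorenstein algebras being compressed. The rest is routine assembly of earlier results.
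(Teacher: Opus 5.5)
Your proposal is correct and follows the paper's argument. For (a) you cite Gibbons. For (b) you use Theorem~\ref{thm:purityImpliesKoszul} together with Remark~\ref{rmk:Gorenstein-and-Koszul} to reduce to $e\geq 3$, $s=3$, and then conclude via the linear exact zerodivisors of Remark~\ref{linearBetti}(b) and Theorem~\ref{notSufficient}.

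The paper only asserts the reduction to $s=3$, and Remark~\ref{rmk:Gorenstein-and-Koszul}(c) as printed does not cover the case $e=3$, $s\geq 4$. Your compressedness count supplies that case correctly: $h_2=\binom{e+1}{2}$ when $s\geq 4$, so the defining ideal contains no quadrics and $R$ cannot be quadratic.
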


\section{Purity of Extremal Rays and Minimal Multiplicity}\label{sec:purity-and-minimal-multiplicity}

As noted in the introduction section, given any standard graded algebra $R$, the Betti tables of shifts of the modules of the form (a) $R/\m^j$, where $j \geq 0$, and (b) the modules $M$ with pure resolution satisfying $\codim(M)=\pdim_R(M)$ always span an extremal ray in $\mathbb B_{\mathbb Q}(R)$. If $R$ is a polynomial ring, then due to the work of Eisenbud--Schreyer \cite{ES09}, we know that (a) and (b) gives a complete set of extremal rays of $\mathbb B_{\mathbb Q}(R)$. In this section, we characterize all standard graded algebras $R$ having (a) and (b) to be the only extremal rays of $\mathbb B_{\mathbb Q}(R)$. 

\begin{lemma}\label{lem:divisibility}
    Let $R$ be a standard graded $\sk$-algebra with $\edim(R)\geq 2$, Hilbert series $H_R(z)= \sum_{i\in \mathbb Z} h_i z^i$, and $I$ be a nonzero ideal of $R$ generated in degree $i$. Suppose that
 \begin{enumerate}[{\rm (a)}]
            \item $\mathbb B_{\mathbb Q}(R)=\mathbb B_{\mathbb Q}^{{\rm{pure}}}(R)$.
            \item The extremal rays of $\mathbb B_{\mathbb{Q}}(R)$ are spanned by Betti tables of shifts of $R$ and $R/\mathfrak m^i, i \geq 1$. 
            \end{enumerate}
         Then $R$ is Artinian, $I$ has a linear resolution, and $\beta^R(I)= \dfrac{\mu(I)}{\mu(\m^i)}\beta^R(\m^i)$.\\
         In particular, $\beta_{1, i+1}(I)= {\mu(I)} \left( h_1 - \dfrac{h_{i+1}}{h_i}\right)$.  
\end{lemma}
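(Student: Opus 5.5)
The plan is to get Artinianness from Theorem~\ref{thm:depthZeroImpliesArtinianLevel}, and then to decompose the Betti table of $R/I$ rather than that of $I$, so that hypothesis (b) pins down a single extremal ray.

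\emph{Step 1: $R$ is Artinian.} By (a) and Theorem~\ref{thm:depthZeroImpliesArtinianLevel}, it suffices to show $\depth(R)=0$. Suppose instead that $\depth(R)>0$. Since $\sk$ is infinite, there is a linear nonzerodivisor $x$. By Remark~\ref{rem:R-mod-regular-seq-are-extremal}, $\beta^R(R/\langle x\rangle)$ spans an extremal ray, and it has type $(0,1,\infty,\ldots)$.
\begin{itemize}
\item It is not a multiple of a shift of $\beta^R(R)$, which has projective dimension $0$.
\item It is not a multiple of a shift of $\beta^R(R/\m^j)$, whose first syzygy module is $\m^j$ with $\mu(\m^j)=h_j$.
\end{itemize}
Matching a shift of $R/\m^j$ against $R/\langle x\rangle$ forces $j=1$ and $\mu(\m)=1$, contradicting $\edim(R)\ge 2$. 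This contradicts (b), so $\depth(R)=0$ and hence $R$ is Artinian.

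\emph{Step 2: decompose $\beta^R(R/I)$.} By (b), write $\beta^R(R/I)$ as a nonnegative rational combination of Betti tables of shifts $R(-a)$ and $(R/\m^j)(-a)$. Column $0$ of $\beta^R(R/I)$ is concentrated in degree $0$, so every shift satisfies $a=0$. Column $1$ is $\beta_1^R(R/I)=\beta_0^R(I)$, which is concentrated in degree $i$. Each $R/\m^j$ with $\m^j\neq 0$ contributes $h_j\neq 0$ in degree $j$ of column $1$, so only $j=i$ can occur; terms with $\m^j=0$ are just copies of $R$. Hence
\[
\beta^R(R/I)=c_0\,\beta^R(R)+c\,\beta^R(R/\m^i).
\]
Discarding column $0$ and shifting homological degree gives $\beta^R(I)=c\,\beta^R(\m^i)$. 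Comparing $\beta_0$ yields $c=\mu(I)/\mu(\m^i)$.

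\emph{Step 3: linearity and the formula.} Since $R$ is Koszul by Theorem~\ref{thm:purityImpliesKoszul}, the ideal $\m^i$ has a linear resolution. I plan to cite this as a standard fact for Koszul algebras. If a self-contained argument is wanted, I would induct on $i$ using $0\to\m^{i+1}\to\m^i\to\sk(-i)^{h_i}\to 0$ together with $\reg(\sk)=0$, bounding $\reg(\m^{i+1})\le i+1$. Because $\beta^R(I)$ is proportional to $\beta^R(\m^i)$, the ideal $I$ has a linear resolution as well.

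Finally, $\m^i$ is minimally generated by $h_i$ elements of degree $i$. The degree $i+1$ part of its first syzygies is the kernel of $R_1^{h_i}\to R_{i+1}$, which is surjective, so $\beta_{1,i+1}(\m^i)=h_1h_i-h_{i+1}$. Scaling by $\mu(I)/h_i$ gives
\[
\beta_{1,i+1}(I)=\mu(I)\left(h_1-\frac{h_{i+1}}{h_i}\right).
\]

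The main obstacle is Step 3. The Koszul input (linearity of powers of $\m$) is the one ingredient not proved earlier in the paper, and Step 1 also needs care to exclude $\beta^R(R/\langle x\rangle)$ coinciding with some listed extremal ray.
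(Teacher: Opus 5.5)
Your proposal is correct and follows the paper's proof essentially step for step. You rule out a linear nonzerodivisor via Remark~\ref{rem:R-mod-regular-seq-are-extremal} and then invoke Theorem~\ref{thm:depthZeroImpliesArtinianLevel}, you decompose $\beta^R(R/I)$ as $c_0\beta^R(R)+c\,\beta^R(R/\m^i)$, and you use Koszulness for the linearity of $\m^i$. The only minor difference is how you get $\beta_{1,i+1}(\m^i)=h_1h_i-h_{i+1}$: you count it directly as the dimension of the kernel of the surjection $R_1^{h_i}\to R_{i+1}$, which does not even need linearity, whereas the paper reads it off from $\mathcal P_{\m^i(i)}(-z)=H_{\m^i(i)}(z)/H_R(z)$. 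You also supply details the paper leaves implicit: why all shifts vanish, and why $\beta^R(R/\langle x\rangle)$ cannot coincide with a listed ray.
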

\begin{proof}
    If $R$ contains a nonzerodivisor $x$, then by Remark \ref{rem:R-mod-regular-seq-are-extremal}, we see that $\beta^R(R/\langle x \rangle)$ spans an extremal ray of $\mathbb B_{\mathbb Q}(R)$, which is a contradiction to (b). This shows that $\depth(R)=0$. By Theorem \ref{thm:depthZeroImpliesArtinianLevel}, we get that $R$ is Artinian. 

    Observe that $\beta_{1,j}(R/\mathfrak m^i)\neq 0$ implies $j=i$. By condition (b), there exist $c_1\in \mathbb Q_{\geq 0}, c_2\in \mathbb Q_{>0}$ such that $\beta^R(R/I)=c_1 \beta^R(R)+c_2 \beta^R(R/\mathfrak m^i)$. Thus, $\beta^R(I)=c_2\beta^R(R/ \m^i)$. This forces $c_2=\mu(I)/\mu(\m^i)=\mu(I)/h_i$. Note that by Theorem \ref{thm:purityImpliesKoszul}, we have that $R$ is Koszul.  Hence $\m^i$ has a linear resolution, which forces $I$ to have a linear resolution. 

    Since $\m^i(i)$ is generated in degree zero and has a linear resolution, we have $H_{\mathfrak m^i(i)}(z)=H_R(z)\mathcal{P}_{\mathfrak m^i(i)}(-z)$. Therefore, 
    \[\mathcal{P}_{\mathfrak m^i(i)}(z)=\dfrac{h_i-h_{i+1}z+\cdots}{1-h_1z+h_2z^2-h_3z^3+\cdots}=h_i+(h_1h_i-h_{i+1})z+\cdots,\]
which shows $\beta_{1,i+1}(\m^i) = h_1h_i-h_{i+1}$. This proves the equality $\beta_{1,i+1}(I)=\mu(I)\left(h_1 - \dfrac{h_{i+1}}{h_i}\right)$.
\end{proof}

\begin{lemma}\label{lem:mingenswithNonzeroproduct}
    Let $R$ be a standard graded $\mathsf k$-algebra with $\edim(R)=n\geq 2$ such that $\m^2 \neq 0$. Then there exists a minimal generating set $\{x_1,\ldots,x_n\}$ of $\m$ such that $x_ix_j\neq 0$ for some $i \neq j$.
\end{lemma}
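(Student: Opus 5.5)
Since $R$ is standard graded, $\m^2$ is spanned by the products $y_iy_j$ of any fixed minimal generating set $\{y_1,\ldots,y_n\}$ of $\m$ (which we may take in $R_1$). The hypothesis $\m^2\neq 0$ therefore gives some product $y_iy_j\neq 0$. If such a nonzero product occurs with $i\neq j$, we are done with $x_k=y_k$. Otherwise every product $y_iy_j$ with $i\neq j$ vanishes, and some square $y_k^2\neq 0$; after relabelling, assume $y_1^2\neq 0$.

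In this case we change the generating set by a linear substitution. Since $n\geq 2$, set $x_1=y_1$, $x_2=y_1+y_2$, and $x_k=y_k$ for $k\geq 3$. The transition matrix is unipotent, so $\{x_1,\ldots,x_n\}$ is again a basis of $R_1$ and hence a minimal generating set of $\m$. Now
\[
x_1x_2=y_1^2+y_1y_2=y_1^2\neq 0,
\]
because $y_1y_2=0$ in this case. Thus $x_1x_2\neq 0$ with $1\neq 2$, which is the desired conclusion.

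Nothing here is really an obstacle. The only points to check are that minimal generating sets of $\m$ in the standard graded setting correspond exactly to $\sk$-bases of $R_1$, so that the unipotent change of basis preserves minimality, and the case split on whether the nonzero product is mixed or a square. The field being infinite is not needed, since the substitution $y_2\mapsto y_1+y_2$ works over any field.
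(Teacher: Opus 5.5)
Your proof is correct and is essentially the paper's argument: if all mixed products of a minimal generating set vanish, then some square $y_i^2\neq 0$, and replacing $y_j$ by $y_i+y_j$ gives $y_i(y_i+y_j)=y_i^2\neq 0$. One small caveat: the correspondence between minimal generating sets of $\m$ and $\sk$-bases of $R_1$ holds only for homogeneous generating sets. Your argument only uses that every basis of $R_1$ is a minimal generating set, so nothing breaks.
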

\begin{proof}
Let $S=\{x_1,\ldots,x_n\}$ be any minimal generating set of $\m$. If $S$ does not satisfy the desired property, then we have $x_i x_j = 0$ for all $i \neq j$. Since $\m^2 \neq 0$, we get $x_i^2 \neq 0$ for some $i$. Then for $j\neq i$, the generating set $\left(S \setminus \{x_j\}\right) \cup \{x_i+x_j\}$ has the desired property.
\end{proof}

\begin{thm}\label{thm:converseIntheArtiniancase}
    Let $R$ be a standard graded $\sk$-algebra with $\edim(R)\geq 2$ such that 
 \begin{enumerate}[{\rm (a)}]
            \item $\mathbb B_{\mathbb Q}(R)=\mathbb B_{\mathbb Q}^{{\rm{pure}}}(R)$.
            \item The extremal rays of $\mathbb B_{\mathbb{Q}}(R)$ are spanned by Betti tables of shifts of $R$ and $R/\mathfrak m^i, i \geq 1$. 
            \end{enumerate}
     Then $H_R(z)=1+nz$ for some $n \in \mathbb Z$, and hence $R$ is level.
\end{thm}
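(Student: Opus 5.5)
The plan is to prove that $\m^2=0$. Then $H_R(z)=1+nz$ with $n=\edim(R)$, and $R$ is level because $\soc(R)=\m=R_1$ is concentrated in degree $1$. So assume, for a contradiction, that $\m^2\neq 0$. Write $h_j=\dim_{\sk}R_j$, so $h_1=n\geq 2$ and $h_2\neq 0$.

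The main tool is Lemma~\ref{lem:divisibility} applied with $i=1$ to ideals generated by linear forms. Let $V\subseteq R_1$ be a nonzero subspace of dimension $k$, and let $I=\langle V\rangle$, so $I$ is generated in degree $1$ and $\mu(I)=k$. By Lemma~\ref{lem:divisibility}, $I$ has a linear resolution and $\beta_{1,2}(I)=k\left(h_1-\tfrac{h_2}{h_1}\right)$. Consider the start of the minimal resolution $R(-1)^k\to I\to 0$ in degree $2$. Its kernel is $\Omega_1^R(I)$, which is generated in degree $2$ by linearity. Hence $(\Omega_1^R(I))_2$ is exactly the span of the minimal generators, so $\dim_{\sk}(\Omega_1^R(I))_2=\beta_{1,2}(I)$. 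Taking the degree-$2$ part of $0\to\Omega_1^R(I)\to R(-1)^k\to I\to 0$ then gives
\[
\dim_{\sk}(V\cdot R_1)=\dim_{\sk} I_2 = k h_1-\beta_{1,2}(I)= k\,\frac{h_2}{h_1}.
\]
So the multiplication map is ``additive'': $\dim(V R_1)=\dim(V)\cdot h_2/h_1$ for every subspace $V$ of $R_1$.

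Now take two linearly independent linear forms $x,y$. Applying the formula to $\langle x\rangle$, $\langle y\rangle$ and $\langle x,y\rangle$ gives $\dim(xR_1+yR_1)=2h_2/h_1=\dim(xR_1)+\dim(yR_1)$. Therefore $xR_1\cap yR_1=0$. Since $xy$ lies in both $xR_1$ and $yR_1$, we get $xy=0$ for every pair of linearly independent linear forms. By Lemma~\ref{lem:mingenswithNonzeroproduct}, however, $\m$ has a minimal generating set $\{x_1,\dots,x_n\}$ with $x_ix_j\neq0$ for some $i\neq j$. Such $x_i,x_j$ are linearly independent linear forms, which is a contradiction. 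Hence $\m^2=0$.

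I expect the only delicate point to be the identification $\dim(\Omega_1^R(I))_2=\beta_{1,2}(I)$. This is exactly where the linearity of the resolution from Lemma~\ref{lem:divisibility} is needed: Artinianity and Koszulness, hence linearity, all come from that lemma via Theorems~\ref{thm:depthZeroImpliesArtinianLevel} and~\ref{thm:purityImpliesKoszul}. Everything else is a dimension count.
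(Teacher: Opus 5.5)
Your proof is correct and is essentially the paper's argument. The paper applies Lemma~\ref{lem:divisibility} to $\langle x\rangle$, $\langle y\rangle$ and $\langle x,y\rangle$ (with $xy\neq 0$ from Lemma~\ref{lem:mingenswithNonzeroproduct}), then exhibits $2r+1$ independent degree-$2$ syzygies $f_ie_1$, $g_je_2$, $ye_1-xe_2$ where only $2r$ are allowed; this is the kernel-side version of your image-side count $xR_1\cap yR_1=0$ (and your step $\dim_{\sk}(\Omega_1^R(I))_2=\beta_{1,2}(I)$ holds even without linearity, since $\Omega_1^R(I)$ vanishes in degrees $\le 1$).
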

\begin{proof}
   By Lemma \ref{lem:divisibility}, we get that $R$ is Artinian. Suppose that $\m^2\neq 0$. Then by Lemma \ref{lem:mingenswithNonzeroproduct}, there exist homogeneous minimal generators $x, y\in R_1$ of $\m$ such that $xy\neq 0$. Let $M= R/\langle x,y\rangle$.  Since $\Omega_1(M)= \langle x, y \rangle$ is generated in degree one, Lemma \ref{lem:divisibility} with $I=\langle x, y\rangle$ shows that $M$ has a linear resolution, and $\beta_2(M)= 2 \left( h_1 - \dfrac{h_2}{h_1}\right)$. 
   
   Note that for any homogeneous element $z \in R$, we have $\Omega_1(\langle z \rangle)= \ann(z)$. Hence, Lemma \ref{lem:divisibility} applied to $ \langle x\rangle$ and $\langle y\rangle$ shows that  $\ann(x)$ and $\ann(y)$ are generated in degree one with $\mu(\ann(x))=\mu(\ann(y))=h_1 - \dfrac{h_2}{h_1}=r$ (say). Let $\{f_1,\ldots,f_r\}$ and  $\{ g_1,\ldots,g_r\}$, where $f_i, g_j \in R_1$ for all $1\leq i, j\leq r$ be minimal homogeneous generating sets of $\ann(x)$ and $\ann(y)$, respectively.

    Consider a minimal free resolution $\cdots \to R e_1\oplus R e_2\xrightarrow[]{\phi_1} R \to M \to 0$ of $M$, where $\phi_1(e_1)=x$ and $\phi_1(e_2)=y$. Then observe that $\{f_ie_1, g_je_2 \mid 1 \leq i, j \leq r\}$ is a part of a minimal generating set of $\Omega_2(M)$, since each $f_ie_1, 1 \leq i \leq r$ and each $g_je_2, 1\leq j \leq r$ is a minimal generator of $\Omega_2(M)$.
    Now, note that $ye_1-xe_2 \in \Omega_2(M)\setminus  \langle\{f_ie_1, g_je_2 \mid 1 \leq i, j \leq r\}\rangle$, since $xy\neq 0$. Therefore, $\mu(\Omega_2(M))=\beta_2(M)> 2r= 2 \left(h_1 - \dfrac{h_2}{h_1}\right)= \beta_2(M)$, which is a contradiction. Thus, we must have $\m^2=0$. This completes the proof.
\end{proof}

\begin{defn}
    The \emph{fibre cone} of an ideal $I \subset R$ is defined as $ \mathcal F(I)= \bigoplus\limits_{i=0}^{\infty} \dfrac{I^i}{\m I^i}$. In particular, if $I=\m$, then $\mathcal{F}(I)=\mathcal F({\m})=G_{\mathfrak m}(R)$.
\end{defn}

\begin{defn}
    Let $I, J\subset R$ be ideals. Then $J$ is said to be a {reduction} of $I$ if $J \subset I$ and $I^{r+1}=JI^r$ for some $r \in \mathbb N$. A reduction that is minimal with respect to containment of ideals is called as a \emph{minimal reduction}. 
\end{defn}

\begin{rmk}\label{rmk:MinimalReductionandFibreCone}{\rm In this remark, we recall some known facts about minimal reductions. 
\begin{enumerate}[{\rm (a)}]
        \item Every ideal has a minimal reduction. (See \cite[Section 2, Theorem 1]{NR54})
        \item If $J$ is a minimal reduction of $I$, then for every ideal $K$ with $J\subset K \subset I$, we have $\mu(K)\geq \mu(J)$. (See \cite[Section 2, Lemma 3]{NR54})
        \item If $J$ is a minimal reduction of $I$, then $\mu(J)=\dim (\mathcal F(I))$. In particular, if $I$ is a minimal reduction of $\m$, then $\mu(I) = \dim(G_{\mathfrak m}(R))=\dim(R)$. (See \cite[Section 4, Theorem 1]{NR54})
    \end{enumerate}
}\end{rmk}

The following result is known, but we give a proof due to its immediate implications to our next theorem.
\begin{prop}\label{prop:HilbertSeries}
    Let $R$ be a standard graded $\sk$-algebra with $\dim(R)=d$ and $\edim(R)=n$. If $ x_1,\ldots, x_t$ is a linear $R$-regular sequence such that $\m^2=\langle \underline x \rangle \m$, then $t=d$, and $R$ is Cohen--Macaulay with $H_R(z)= \dfrac{1+(n-d)z}{(1-z)^d}$. 
\end{prop}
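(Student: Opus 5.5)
Set $J=\langle x_1,\ldots,x_t\rangle$ and $\bar R=R/J$. The plan is to compute the Hilbert series of $\bar R$ first, then lift it to $R$ using the regular sequence, and finally read off $t=d$ and the Cohen--Macaulay property.

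\textbf{The quotient $\bar R$.} The hypothesis $\m^2=J\m\subseteq J$ gives $\bar\m^2=0$ in $\bar R$. The $x_i$ are linear and form a regular sequence, so they are linearly independent in $R_1$. If some nontrivial linear combination vanished, then some $x_k$ would be a combination of the previous $x_i$, hence a zerodivisor (indeed zero) modulo $\langle x_1,\ldots,x_{k-1}\rangle$. Therefore $\dim_{\sk}\bar R_1=n-t$ and $H_{\bar R}(z)=1+(n-t)z$. In particular, $\bar R$ is Artinian.

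\textbf{Lifting to $R$.} Since each $x_i$ is a linear nonzerodivisor on $R/\langle x_1,\ldots,x_{i-1}\rangle$, each quotient multiplies the Hilbert series by $(1-z)$. Hence
\[H_R(z)=\frac{1+(n-t)z}{(1-z)^t}.\]
Because $1+(n-t)z$ does not vanish at $z=1$, Remark~\ref{rmk:Hilbert-series-etc}(b) forces $\dim R=t$, so $t=d$. Then $x_1,\ldots,x_d$ is a regular sequence of length $d=\dim R$, so $\depth R\geq d$ and $R$ is Cohen--Macaulay.

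Alternatively, $\bar R$ Artinian gives $\dim R\le t$ and the regular sequence gives $t\le\depth R\le\dim R$, which also yields $t=d$.

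\textbf{Main obstacle.} There is no real difficulty here. The only point needing care is the linear independence of the $x_i$ in $R_1$, which ensures $\mu(\bar\m)=n-t$ and follows from regularity as explained above.
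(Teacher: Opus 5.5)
Your proof is correct, but it reaches $t=d$ by a different route from the paper.

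\textbf{The paper's route.} It first observes that $\m^2=\langle \underline x\rangle\m$ makes $\langle \underline x\rangle$ a reduction of $\m$. It then invokes Northcott--Rees: a reduction of $\m$ needs at least $\dim \mathcal F(\m)=\dim G_{\m}(R)=\dim R$ generators. This gives $t\ge d$, which together with $t\le \depth R\le d$ yields $t=d$ and the Cohen--Macaulay property. Only afterwards does the paper compute $H_{R/\langle \underline x\rangle}(z)=1+(n-d)z$ and lift it to $R$.

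\textbf{Your route.} You keep $t$ as an unknown and compute $H_{\bar R}(z)=1+(n-t)z$. You lift this to $H_R(z)=(1+(n-t)z)/(1-z)^t$. You then read off $t=\dim R$ from the order of the pole at $z=1$, using $1+(n-t)\ge 1$ and positivity of the multiplicity. Alternatively, you get it from Krull's height theorem applied to the $\m$-primary ideal $\langle \underline x\rangle$.

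\textbf{Comparison.} Your argument is more elementary: it needs no reduction theory, and a single Hilbert series computation gives both $t=d$ and the formula. You also make explicit the linear independence of $x_1,\ldots,x_t$ in $R_1$. The paper uses this tacitly when it asserts $\dim_{\sk}(R/\langle\underline x\rangle)_1=n-d$. The paper's approach fits naturally with the reduction and fibre-cone facts it has just recalled. For this statement, however, it gives nothing beyond Krull's theorem, since any reduction of $\m$ is already $\m$-primary.
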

\begin{proof}
    Let $\underline x$ denote $x_1,\ldots, x_t$. Since $\m^2=\langle \underline x \rangle \m$, the ideal $\langle \underline x\rangle$ is a reduction of $\m$. By Remark \ref{rmk:MinimalReductionandFibreCone}, we have that $t=\mu(\langle \underline x\rangle)\geq \dim\left(G_{\mathfrak m}(R)\right) = \dim(R)=d$. Therefore, we must have $t=d$, which proves that $R$ is Cohen--Macaulay.
    
    Since $\underline x$ is a linear regular sequence, we have $H_{R/\langle \underline x \rangle}(z)=(1-z)^d H_R(z)$. Therefore, to complete the proof, it is enough to show that $H_{R/\langle \underline x \rangle}(z)=1 + (n-d)z$. Since $\m^2 =\langle \underline x \rangle \m$, we see that $\dim_{\mathsf k} \left(\left(R/\langle \underline x \rangle\right)_i\right)=0$ for all $i\geq 2$. Since $\underline x$ is a linear regular sequence, we get $\dim_{\mathsf k} \left(\left(R/\langle \underline x \rangle\right)_1\right)=\dim_{\mathsf k}(R_1)- d = n-d$. 
\end{proof}

\begin{thm}\label{thm:dim-at-most-1-for-smallest-set-of-extremals}
Let $R$ be a standard graded  $\sk$-algebra with $\dim(R)=d$ and $\edim(R)=n$ such that $\mathbb B_{\mathbb Q}(R)=\mathbb B _{\mathbb Q}^{\rm pure}(R)$. Suppose that $R$ is not a polynomial ring and that the extremal rays of $\mathbb B_{\mathbb Q}(R)$ are spanned by Betti tables of shifts of the following modules:
\begin{enumerate}[{\rm (a)}]
            \item $R/\m^i$, where $i\geq 0$
            \item Pure modules $M$ of finite projective dimension with $\codim(M)=\pdim(M)$.
\end{enumerate}
Then $R$ is a Cohen--Macaulay, level algebra of dimension at most one, with $H_R(z)= \dfrac{1+(n-d)z}{(1-z)^d}$.

Furthermore, when $\dim(R)=1$, the Betti tables of type (b) are Betti tables of shifts of $\beta^R(R)$ and $
\beta^R(R/\langle x \rangle)$, where $x$ is a linear nonzerodivisor in $R$.
\end{thm}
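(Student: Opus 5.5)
The plan is to cut down to an Artinian or one-dimensional ring by a linear regular sequence. We reduce to the setting of Theorem~\ref{thm:converseIntheArtiniancase} and then lift the conclusion back via Proposition~\ref{prop:HilbertSeries}.

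\emph{Step 1: the depth-zero case.} First we show that either $\depth(R)=0$ or $\depth(R)\geq 1$ forces strong structure. If $\depth(R)=0$, then Theorem~\ref{thm:depthZeroImpliesArtinianLevel} makes $R$ Artinian. An Artinian pure module $M$ with $\codim(M)=\pdim(M)$ has $\pdim(M)=0$, so it is free. Hence the extremal rays are exactly the shifts of $R$ and of $R/\m^i$, which are precisely hypotheses (a) and (b) of Theorem~\ref{thm:converseIntheArtiniancase}. Since $\edim(R)\geq 2$ (the case $\mu(\m)\le 1$ being handled separately as noted at the start of Section~\ref{sec:purity-general}), that theorem gives $H_R(z)=1+nz$, and $R$ is level.

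\emph{Step 2: positive depth.} Assume $\depth(R)=t\geq 1$ and pick a maximal linear $R$-regular sequence $\underline x=x_1,\ldots,x_t$; this is possible since $\sk$ is infinite. We want to show that $\m^2=\langle\underline x\rangle\m$, i.e.\ that $\bar R=R/\langle\underline x\rangle$ satisfies $\bar\m^2=0$. The idea is to test the module $R/(\langle \underline x\rangle+\m^2)$, or more generally $R/(\langle\underline x\rangle+I)$ for suitable linear ideals $I$ in $\bar R$, against the list of allowed extremal rays.

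The key comparison uses Betti numbers. Over $R$, the Betti numbers of an $\bar R$-module $N$ relate to those over $\bar R$ through the Koszul complex on $\underline x$: since $\underline x$ is regular, $\beta^R(N)$ is determined by $\beta^{\bar R}(N)$ together with the Koszul contributions. One route is to mimic Lemma~\ref{lem:divisibility} directly over $R$. For an ideal $J=\langle\underline x, y\rangle$ with $y$ linear, the allowed decomposition of $\beta^R(R/J)$ must use only $R$, the $R/\m^i$, and pure $\codim=\pdim$ modules, whose Betti numbers are controlled by Theorem~\ref{thm:Herzog-Kuhl-in-general}. We then repeat the argument of Theorem~\ref{thm:converseIntheArtiniancase} with $x,y$ replaced by elements whose images $\bar x,\bar y$ in $\bar R$ have nonzero product (using Lemma~\ref{lem:mingenswithNonzeroproduct} on $\bar R$). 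This produces an extra syzygy $\bar y e_1-\bar x e_2$ that cannot be accounted for, so $\bar\m^2=0$.

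Once $\m^2=\langle\underline x\rangle\m$ holds, Proposition~\ref{prop:HilbertSeries} gives that $R$ is Cohen--Macaulay with $t=d$ and $H_R(z)=(1+(n-d)z)/(1-z)^d$. Level follows because $\bar R$ has socle $\bar\m$ concentrated in degree $1$.

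\emph{Step 3: $d\le 1$.} Suppose $d\geq 2$, and let $x_1,x_2$ be part of the regular sequence. Then $R$ is not a polynomial ring, so $n>d$. The module $N=R/\langle x_1\rangle$ is pure with $\codim=\pdim=1$, so it is allowed. We want a module whose extremal summand is forced to be something outside the list. The first candidate is $R/(\langle x_1\rangle+\m^2)$: its Betti table has an infinite linear tail, since $R/\langle x_1\rangle$ is not Artinian-free. In a decomposition, the part of type $(0,1,2,3,\dots)$ with infinite projective dimension must come from some pure module of type $(0,1,2,\ldots)$. Among the allowed list, only $R/\m$ (that is, $\sk$) has this type. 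The coefficient of $\beta^R(\sk)$ is then pinned down by $\beta_{1,1}$ and the growth rates of $\beta_i$. Here the Poincar\'e series of $\sk$ has denominator $(1+z)^{-d}(1-(n-d)z)$-type growth, against that of the test module; comparing pole orders at $z=-1$ via the rational denominators (Theorem~\ref{thm:good}) gives a contradiction when $d\ge2$. I expect this pole-order and coefficient comparison to be the main obstacle.

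\emph{Final claim for $d=1$.} When $d=1$, any pure $M$ with $\codim(M)=\pdim(M)\le 1$ is either free or of type $(\delta_0,\delta_1)$ with codimension one. By Theorem~\ref{thm:Herzog-Kuhl-in-general}, $\beta_1=\beta_0$. It remains to show that $\delta_1-\delta_0=1$, up to scaling by $R/\langle x\rangle$, using the extremality bookkeeping of Theorem~\ref{dim0and1extremals}(b)(ii), where $R/\langle l\rangle^i$ appears. I would show that $R/\langle l\rangle^i$ with $i\ge2$ is not extremal under the hypothesis, or rather that it lies in the list only if $i=1$. This matches the stated conclusion.
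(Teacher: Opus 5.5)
Your Step 1 is correct and is the paper's depth-zero argument. Steps 2 and 3, however, rest on mechanisms that do not work, and both are missing the paper's key ideas.

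\textbf{Step 2.} The extra-syzygy argument of Theorem~\ref{thm:converseIntheArtiniancase} cannot be transplanted to positive depth. Its contradiction comes from the exact values $\mu(\ann x)=r$ and $\beta_2=2r$. Lemma~\ref{lem:divisibility} extracts these from a forced two-term decomposition $c_1\beta^R(R)+c_2\beta^R(R/\mathfrak m^i)$, and that lemma even begins by deducing $\depth R=0$ from the absence of other extremal rays. Once $\depth R=t\ge 1$, pure modules of type $(0,1,\delta_2,\dots,\delta_p)$ with $p\le t$ and $\codim=\pdim$ (for instance $R/\langle x_1\rangle$ or $R/\langle\underline x\rangle$) are allowed extremal rays. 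They enter the decomposition of $\beta^R(R/\langle\underline x,u,v\rangle)$, with $u,v$ linear, with undetermined coefficients, so there is no exact count for an extra syzygy to contradict.

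What is missing is the Herzog--K\"uhl argument. The paper takes $M=R/\langle\underline x,y\rangle$ with $y\in R_2$; your own candidate $R/(\langle\underline x\rangle+\mathfrak m^2)$ works just as well. By Remark~\ref{rem:generators-up-to-degree-j}, $\beta_{i,i}(M)=\binom{t}{i}$ for $i\le t$ and $\beta_{i,i}(M)=0$ for $i>t$. So $\sk$ cannot occur, and the only allowed summand with $\beta_{t,t}\neq 0$ is pure of type $(0,1,\dots,t)$. By Theorem~\ref{thm:Herzog-Kuhl-in-general} its table is a multiple of $\beta^R(R/\langle\underline x\rangle)$, so it has $\beta_{t,t}=\beta_{0,0}$. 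Since $\beta_{t,t}(M)=1=\beta_{0,0}(M)$ and every summand is generated in degree $0$, it is the only summand. Hence $\beta_{1,2}(M)=0$, i.e.\ $y\in\langle\underline x\rangle$.

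\textbf{Step 3.} Your description of $L=R/(\langle x_1\rangle+\mathfrak m^2)$ is wrong. Because $x_1$ is a nonzerodivisor, Remark~\ref{rem:generators-up-to-degree-j} gives $\beta_{i,i}(L)=\beta_{i,i}(R/\langle x_1\rangle)=0$ for $i\ge 2$. So there is no tail of type $(0,1,2,\dots)$, and $\sk$ appears with coefficient $0$. The tail sits in row $1$: $\reg_R L\le 1$ by Theorem~\ref{Thm-reg-over-poly-ring}, so $\Omega_2^R(L)$ is generated in degree $3$ with a linear resolution. A pole comparison at $z=-1$ is also vacuous, since by Theorem~\ref{thm:good} every Poincar\'e series has denominator $1-(n-d)z$.

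The paper's argument is a coefficient count. In homological degree $d+1>\depth R$, the only allowed summand with $\beta_{d+1,d+2}\neq 0$ is $R/\mathfrak m^2$. Its coefficient is therefore forced to be $\beta_{d+1,d+2}(L)/\beta_{d+1,d+2}(R/\mathfrak m^2)=(d-1)/d$, with both numbers computed from Hilbert series using linearity of $\Omega_2^R(L)(3)$ and $\mathfrak m^2(2)$. Subtracting this multiple of $\beta^R(R/\mathfrak m^2)$ from $\beta^R(L)$ leaves a negative $(1,2)$-entry when $d\ge 2$.

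\textbf{The case $d=1$.} Your plan here aims at something false. $R/\langle x^i\rangle$ is pure of type $(0,i)$ with $\codim=\pdim=1$, hence extremal by Theorem~\ref{SomeExtremalRays}, so $\delta_1-\delta_0=1$ cannot be forced. The paper simply invokes Theorem~\ref{dim0and1extremals}(b)(ii), which lists $R/\langle l\rangle^i$ for all $i$. The last sentence of the statement should be read in that sense.
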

\begin{proof}
  Since $\mathbb B_{\mathbb Q}(R)=\mathbb B _{\mathbb Q}^{\rm pure}(R)$, by Theorem~\ref{thm:purityImpliesKoszul}, we see that $R$ is Koszul. 
     If $\depth(R)=0$, then by Theorem \ref{thm:depthZeroImpliesArtinianLevel}, we get that $R$ is Artinian and level. Therefore, by Theorem \ref{thm:converseIntheArtiniancase}, we get $H_R(z)=1+nz$, as required. 

    Now suppose that $\depth(R)=t\geq 1$, and consider a maximal linear $R$-regular sequence $x_1,\ldots,x_t$, denoted $\underline x$. We first claim that $\m^2= \langle \underline x \rangle \m$. If the claim is true, by Proposition \ref{prop:HilbertSeries} we would get that $R$ is Cohen--Macaulay, level algebra with the required Hilbert series.
    Suppose $y\in R_2$. We show that $y \in \langle \underline{x} \rangle$. 
    
    Set $M=R/\langle \underline x, y\rangle$. Then $\Omega_1(M)\simeq \langle \underline x, y\rangle$, and hence $\beta_{1,j}(M)=0$ for all $j \geq 3$. Thus, by our hypothesis about the extremal rays of $\mathbb B_{\mathbb Q}(R)$, we may write 
    $$\beta^R(M)= a \beta^R(R/\m) + b \beta^R(R/\m^2) + c \beta^R(N_0) + \sum_{j=1}^r d_j\beta^R(N_j),$$ where $a,b,c, d_j \in \mathbb Q_{\geq 0}$, $N_0$ is pure of type $(0,1,\ldots, t)$ with $\codim(N_0)=\pdim(N_0)$, and each $N_j$ is a pure module of finite projective dimension with $\codim(N_j)=\pdim(N_j)$. Furthermore, we may assume that $N_0, N_1, \ldots, N_r$ have different types.

    Since $\underline x$ is a linear regular sequence, the ideal $\langle \underline{x} \rangle$ has a linear resolution of type $(1,2,3,\ldots,t)$. 
    Hence, by  Remark \ref{rem:generators-up-to-degree-j}, we have $\beta_{i,i}(M)=\beta_{i-1,i-1}(\langle \underline x \rangle)=0$, for all $i>t$ and $\beta_{i,i}(M)={\binom{d}{i}
}$ for $0 \leq i \leq t$. This forces $a=0$.
    Since $\depth(R)=t$, given any $R$-module $L$, if $\pdim(L)<\infty$, then $\pdim(L)\leq t$. Therefore,  $\beta_{t,t}(N_j)\neq 0$ if and only if $j=0$. Since $\beta_{t,t}(M)=1\neq 0$, we see that $c \neq 0$. By Theorem \ref{thm:Herzog-Kuhl-in-general}, we have $\beta^R(N_{0})=d \beta^R(R/\langle \underline{x} \rangle)$ for some $d \in \mathbb Q_{>0}$. Since $\beta_{0,0}(R/\langle \underline{x}\rangle)=\beta_{t,t}(R/\langle \underline{x}\rangle)$, it follows that $\beta(M)=c\beta^R(N_0)=cd\,\beta(R/\langle \underline{x}\rangle)$. Now, $\beta_{1,2}(R/\langle \underline{x}\rangle)=0$ implies $\beta_{1,2}(M)=0$, and hence $\langle \underline x, y\rangle = \langle \underline x\rangle$, i.e., $y \in \langle \underline x \rangle$.

    We now show that $\dim(R)$ must be equal to one. We do this by contradiction. Accordingly, suppose that $d\geq 2$. Let $x$ be a linear nonzerodivisor on $R$, and $L=R/(\langle x \rangle +\m^2)$. Writing $R=S/I$, where $S=\sk[x_1,\ldots, x_n]$, we see that $\reg_S(L)\leq 1$. Thus, by Theorem~\ref{Thm-reg-over-poly-ring}, we get $\reg_R(L)\leq 1$. Since $d\geq 2$, $\m^2 \not\subset \langle x \rangle$, and hence $\beta_{1,2}^R(L)\neq 0$. This forces $\reg_R(L)=1$. Note that since $x$ is a nonzerodivisor, by Remark~\ref{rem:generators-up-to-degree-j}, we have that $\beta_{i,i}^R(L)=0$ for all $i \geq 2$. This shows that $\Omega_2^R(L)$ is generated in degree 3, and has a linear resolution. 
    
    Since $x$ is a linear nonzerodivisor, the module $\langle x \rangle + \m^2$ has one minimal generator in degree 1 
    and $\dim_{\mathsf k}(R_2)-\dim_{\mathsf k}(R_1)= \binom{d+1}{2}+ (n-d) -n = n(d-1)-\binom{d}{2}$ 
    minimal generators in degree 2.
    Using the additivity of Hilbert series over the short exact sequence
    $$ 0 \to \Omega_2^R(L) \to R(-1)\oplus R(-2)^{ n(d-1)-\binom{d}{2}} \to \langle x \rangle + \m^2 \to 0,$$
    we obtain 
    \begin{align*}
        H_{\Omega^R_2(L)}(z)&= \left( z+\left(n(d-1)-\binom{d}{2}\right)z^2 \right) H_R(z) - \left( H_R(z) - \left(1+(n-1)z\right)\right).
    \end{align*}
Since $\Omega_2(L)(3)$ is generated in degree zero and has a linear resolution, we have 
\begin{align*}
    \mathcal P_{\Omega_2^R(L)(3)}(z)&= \dfrac{H_{\Omega_2^R(L)(3)}(-z)}{H_R(-z)}\\
    &= \dfrac{\dfrac{1}{(-z)^3} \left( z+\left(n(d-1)-\binom{d}{2}\right)z^2 \right) H_R(z) - \left( H_R(z) - \left(1+(n-1)z\right)\right)}{H_R(-z)}\\
    &= \dfrac{1}{z^3} \left\{(1+z- \left(n(d-1) - \binom{d}{2} \right)z^2 - \dfrac{1-(n-1)z}{H_R(-z)} \right\}.
\end{align*}
Computing the coefficient of $z^{d+2}$, we get \begin{equation*}
    \beta_{d+1,d+2}^R(L) = (d-1) \sum\limits_{i=1}^d \binom{d}{i} (n-d)^{d+1-i}.
\end{equation*}
Note that since $R$ is not a polynomial ring, we have $n>d$. In particular, the Betti number in the above equation is nonzero.

Now, observe that by Theorem~\ref{thm:purityImpliesKoszul},  $R$ is Koszul. Hence,  $\m^2(2)$ has a linear linear resolution, and it is generated in degree zero. Thus, computing $\beta_{d+1,d+2}^R(R/\m^2)$ in a similar manner as above, we get
\begin{equation*}
    \beta_{d+1,d+2}^R(R/\m^2)= (d )\  \sum\limits_{i=1}^d \binom{d}{i} (n-d)^{d+1-i}.
\end{equation*}
Since $\beta_{0,0}^R(L) \neq 0$ and $\beta_{i,i+i}^R(L) \neq 0$ for all $i \geq 1$, the hypothesis on the extremal rays implies that in every decomposition of $\beta^R(L)$ into the extremal Betti tables, $\beta^R(R/\m^2)$ appears with coefficient $$\beta^R_{d+1, d+2}(L)/ \beta^R_{d+1, d+2}(R/\m^2) =  (d-1) /d.$$ On the other hand, we observe that 
\begin{align*}
    \beta_{1,2}^R(L) - \dfrac{d-1}{d} \beta_{1,2}^R(R/\m^2) &= \left( n(d-1) - \binom{d}{2} \right) - \dfrac{d-1}{d} \left( \binom{d+1}{2} + (n-d) d \right) \\
    &= \dfrac{1-d}{d} < 0,
\end{align*}
which is a contradiction, since every entry in $\beta^R(L)-\dfrac{d-1}{d}\beta^R(R/\m^2)$ has to be nonnegative. This shows that $d=1$.

Finally, since $\dim(R)=1$, the last part of the statement follows from Theorem~\ref{dim0and1extremals}(b)(ii).
\end{proof}

A standard graded algebra $R$ is said to have \emph{minimal multiplicity} if $e(R)=\edim(R)-\dim(R)+1$ (see \cite{EG84}). Thus, from Theorem~\ref{thm:dim-at-most-1-for-smallest-set-of-extremals} and Theorem~\ref{dim0and1extremals}, the following characterization is immediate. 

\begin{cor}
    Let $R$ be a standard graded $\sk$-algebra. Then the following are equivalent. 
    \begin{enumerate}[{\rm (i)}]
        \item $R$ is a polynomial ring or Cohen--Macaulay ring of dimension at most one with minimal multiplicity.
        \item The extremal rays of $\mathbb B_{\mathbb Q}(R)$ are spanned by Betti tables of shifts of the following modules:
\begin{enumerate}[{\rm (a)}]
            \item $R/\m^i$, where $i\geq 0$
            \item Pure modules $M$ of finite projective dimension with $\codim(M)=\pdim(M)$.
\end{enumerate} 
    \end{enumerate}
\end{cor}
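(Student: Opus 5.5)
We prove the two implications separately. Both are assembled from Theorem~\ref{thm:dim-at-most-1-for-smallest-set-of-extremals} and Theorem~\ref{dim0and1extremals}; the remaining work is matching the phrase ``minimal multiplicity'' to the Hilbert series $H_R(z)=(1+(n-d)z)/(1-z)^d$ with $d\leq 1$.

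For (i)$\Rightarrow$(ii), first let $R$ be a polynomial ring. Then the conclusion is the Eisenbud--Schreyer theorem \cite{ES09}: the extremal rays are the Betti tables of pure Cohen--Macaulay modules, and over a polynomial ring Cohen--Macaulay means $\codim=\pdim$. The modules $R/\m^i$ are among these.

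Now let $R$ be Cohen--Macaulay of dimension $d\leq 1$ with $e(R)=n-d+1$. Cut down by a linear regular sequence of length $d$; this is possible because $\sk$ is infinite. The Artinian reduction $A$ has $\edim(A)=n-d$ and length $e(R)=n-d+1$. Hence $H_A(z)=1+(n-d)z$, and so $H_R(z)=(1+(n-d)z)/(1-z)^d$. Theorem~\ref{dim0and1extremals}(b) then lists the extremal rays.
\begin{enumerate}[{\rm (1)}]
\item If $d=0$, they are shifts of $R=R/\m^0$ and $\sk=R/\m$.
\item If $d=1$, they are shifts of $R$, $R/\m^i$, and $R/\langle l\rangle^i$. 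Each $R/\langle l^i\rangle$ has resolution $0\to R(-i)\to R$, so it is pure with $\codim=\pdim=1$, and it is of type (b).
\end{enumerate}
So every extremal ray is of type (a) or (b). The converse inclusion, that the modules in (a) and (b) do give extremal rays, is Theorem~\ref{SomeExtremalRays}. In fact (ii) only asserts that the extremal rays come from these modules, so this step is a sanity check.

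For (ii)$\Rightarrow$(i), assume $R$ is not a polynomial ring. Every extremal ray is then pure: $R/\m^i$ is pure because $R/\m^i$ has a pure resolution whenever it spans an extremal ray, and the modules in (b) are pure by definition. Hence $\mathbb B_{\mathbb Q}(R)=\mathbb B_{\mathbb Q}^{\rm pure}(R)$. This uses the fact that a rational polyhedral-type cone is generated by its extremal rays.

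This is the step I expect to be the main obstacle, because $\mathbb B_{\mathbb Q}(R)$ is infinite dimensional. I would argue degree-window-wise, restricting to finitely many entries of the Betti tables, or else read statement (ii) as saying the cone is spanned by these rays, which gives the equality directly. One more point needs care: purity of $R/\m^i$ requires $\m^i$ to have a linear resolution, which needs Koszulness. To avoid circularity I would take the spanning reading of (ii).

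Theorem~\ref{thm:dim-at-most-1-for-smallest-set-of-extremals} then applies and shows $R$ is Cohen--Macaulay with $d\leq 1$ and $H_R(z)=(1+(n-d)z)/(1-z)^d$. Therefore $e(R)=1+n-d$, which is minimal multiplicity.
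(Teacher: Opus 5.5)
Your direction (i)$\Rightarrow$(ii) is fine and matches what the paper intends: Eisenbud--Schreyer for polynomial rings, and otherwise the Hilbert series computation plus Theorem~\ref{dim0and1extremals}(b), with $R/\langle l^i\rangle$ of type (b).

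The gap is in (ii)$\Rightarrow$(i). Theorem~\ref{thm:dim-at-most-1-for-smallest-set-of-extremals} needs $\mathbb B_{\mathbb Q}(R)=\mathbb B^{\rm pure}_{\mathbb Q}(R)$, and your route to this equality fails.

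\begin{itemize}
\item Your claim that $R/\m^i$ is pure whenever it spans an extremal ray is false. By Theorem~\ref{SomeExtremalRays}, $\beta^R(R/\m)$ \emph{always} spans an extremal ray. But by Theorem~\ref{Koszul}, $R/\m=\sk$ is pure only when $R$ is Koszul, which fails e.g.\ for $\sk[X,Y]/\langle X^3\rangle$.
\item The ``spanning reading'' of (ii) is the right reading; it is how the paper uses the hypothesis. It still does not help. It writes each Betti table as a combination of tables of the $R/\m^i$ and of type (b) modules. That is a combination of \emph{pure} tables only if every $\m^i$ has a pure, hence linear, resolution, i.e.\ only if Koszulness is already known.
\item In the proof of Theorem~\ref{thm:dim-at-most-1-for-smallest-set-of-extremals}, Koszulness is obtained \emph{from} the cone equality via Theorem~\ref{thm:purityImpliesKoszul}. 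The depth-zero case goes through Theorems~\ref{thm:depthZeroImpliesArtinianLevel} and~\ref{thm:converseIntheArtiniancase}, which also assume the equality. So the circularity you noticed is still there.
\end{itemize}

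The paper's one-line ``immediate'' derivation leaves this same step implicit, so you have to supply it. You should also record the standing assumption $\mu(\m)\ge 2$. For $\sk[X]/\langle X^m\rangle$ with $m\ge 3$, every module is a sum of shifts of $R/\m^j$, so (ii) holds while (i) fails.

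To close the gap, rerun the proof of that theorem using only decompositions into the listed rays.

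\begin{itemize}
\item \emph{Case $\depth(R)=t\ge 1$.} The part of the argument that decomposes $\beta^R(R/\langle \underline x,y\rangle)$ never uses purity, and it gives $\m^2=\langle\underline x\rangle\m$. Proposition~\ref{prop:HilbertSeries} then yields $H_R(z)=(1+(n-d)z)/(1-z)^d$, so $R$ is Koszul by Remark~\ref{rmk:min-mult-is-Koszul}. After that, the $d\ge 2$ contradiction runs verbatim.
\item \emph{Case $\depth(R)=0$.} Type (b) modules are free by Auslander--Buchsbaum. So for an ideal $I$ minimally generated by linear forms, only $\beta^R(R)$ and $\beta^R(\sk)$ can occur in $\beta^R(R/I)$, which gives $\beta^R(I)=\frac{\mu(I)}{n}\beta^R(\m)$. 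Since $\beta^R_{1,2}(I)=n\mu(I)-\dim_{\sk} I_2$, compare this for $\langle x\rangle$, $\langle y\rangle$ and $\langle x,y\rangle$, with $x,y\in R_1$ linearly independent. This gives $\dim_{\sk}(xR_1+yR_1)=\dim_{\sk} xR_1+\dim_{\sk} yR_1$, so $xy\in xR_1\cap yR_1=0$. Lemma~\ref{lem:mingenswithNonzeroproduct} then forces $\m^2=0$, i.e.\ $H_R(z)=1+nz$.
\end{itemize}
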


\section{Rings with Hilbert Series \texorpdfstring{$\dfrac{1+nz}{(1-z)^d}$}{(1+nz)/(1-z)^d}}
\label{special-Hilbert-series}

This section focuses on understanding the graded Betti numbers of modules over standard graded algebras having minimal multiplicity. We also give a complete description of the Betti cone of maximal Cohen--Macaulay modules over these algebras. 

\begin{rmk}\label{rmk:min-mult-is-Koszul}{\rm
Let $R$ be a Cohen--Macaulay standard graded $\mathsf k$-algebra with $H_R(z)= (1+nz)/(1-z)^d$, and $\underline{x}$ be a maximal regular sequence of linear forms in $R$. Let $\overline{R}= R/ \langle \underline{x}\rangle$. Then $H_{\overline{R}}(z)=1+nz$, and hence, $\Omega_1^{\overline{R}}(\sk) \simeq \overline{\m} \simeq \sk(-1)^n$. Therefore, $\sk$ has linear resolution over $\overline{R}$, i.e., $\overline{R}$ is Koszul, and $\beta^{\overline{R}}_i(\sk)=n^i$.
Since $\overline{R}$ is Koszul, by Remark \ref{Rmk-transfer-of-Koszulness}, it follows that $R$ is Koszul.
}\end{rmk}

\begin{prop}\label{Prop-betti-nos-of-R/mi}
Let $R$ be a Cohen--Macaulay standard graded $\sk$-algebra with $H_R(z)= (1+nz)/(1-z)^d$. Then given any $i\geq 1$, we have $\beta_{j+1}(R/\m^i) = n \beta_j(R/\m^i)$ for all $j \geq d$.
\end{prop}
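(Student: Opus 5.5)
The plan is to compute the Poincaré series of $R/\m^i$ in closed form and show that $(1-nz)\,\mathcal P_{R/\m^i}(z)$ is a polynomial of degree at most $d$. Comparing coefficients of $z^{j+1}$ for $j\geq d$ then gives $\beta_{j+1}(R/\m^i)-n\beta_j(R/\m^i)=0$.

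\emph{Step 1: linearity of the resolution of $\m^i$.} By Remark~\ref{rmk:min-mult-is-Koszul}, $R$ is Koszul. Over a Koszul algebra every power $\m^i$ has a linear resolution; this is standard, and the paper already uses it in Lemma~\ref{lem:divisibility}. Assume $\m^i\neq 0$, which is automatic when $d\geq 1$. Then $\m^i(i)$ is generated in degree $0$, and Remark~\ref{rmk:Hilbert-series-etc}(c) gives
\[
\mathcal P_{\m^i(i)}(z)=\frac{H_{\m^i(i)}(-z)}{H_R(-z)},\qquad H_R(-z)=\frac{1-nz}{(1+z)^d}.
\]

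\emph{Step 2: the algebra.} Write $H_{\m^i}(z)=H_R(z)-p(z)$, where $p(z)=\sum_{k<i}\dim_{\sk}(R_k)z^k$ is a polynomial of degree at most $i-1$. Then
\[
\mathcal P_{\m^i}(z)=(-z)^{-i}\Bigl(1-\frac{p(-z)(1+z)^d}{1-nz}\Bigr),
\qquad
\mathcal P_{R/\m^i}(z)=1+z\,\mathcal P_{\m^i}(z).
\]
Multiplying by $(1-nz)$ gives
\[
(1-nz)\mathcal P_{R/\m^i}(z)=(1-nz)+(-1)^i z^{1-i}\bigl((1-nz)-p(-z)(1+z)^d\bigr).
\]
The left side is a power series, so all negative powers of $z$ on the right must cancel. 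The remaining terms have degree at most $\max\{1,\,2-i,\,d\}$, using $\deg\bigl(p(-z)(1+z)^d\bigr)\leq i-1+d$. For $i\geq 1$ this is at most $\max\{1,d\}$.

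\emph{Step 3: reading off the recursion.} If $d\geq 1$, the product is a polynomial of degree at most $d$. Hence the coefficient of $z^{j+1}$, namely $\beta_{j+1}-n\beta_j$, vanishes for all $j\geq d$.

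\emph{Main obstacle: the case $d=0$.} Here the relevant bookkeeping needs care. If $i=1$, then $R/\m=\sk$, and $\beta_j(\sk)=n^j$ by Remark~\ref{rmk:min-mult-is-Koszul}, so the claim holds directly. If $i\geq 2$, then $\m^2=0$ forces $R/\m^i=R$, whose Betti numbers are $1,0,0,\dots$. In that case $\beta_{j+1}=n\beta_j$ holds only for $j\geq 1$ and fails at $j=0$. So I would either restrict the statement to $\m^i\neq 0$ (equivalently $d\geq 1$ or $i=1$) or treat this trivial case separately.

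Apart from this, the only delicate point is confirming the cancellation of the negative powers of $z$ and the resulting degree bound in Step 2. Both follow from the fact that the left side is a genuine power series.
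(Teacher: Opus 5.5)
Your argument is correct and is essentially the paper's proof. Both use that $\m^i$ has a linear resolution over the Koszul ring $R$, write $\mathcal P_{\m^i(i)}(z)$ as $z^{-i}$ times a polynomial over $1-nz$, and read off the recursion; your multiplication by $1-nz$ is just Remark~\ref{rmk:series}. Your caveat for $d=0$ is also right: when $i\geq 2$ and $n\geq 1$ one has $R/\m^i=R$, so $\beta_1(R/\m^i)=0\neq n\,\beta_0(R/\m^i)$ and the statement genuinely fails. The paper's shift from $\m^i$ to $R/\m^i$ at $j=d-1=-1$ silently overlooks this case, which is harmless there because the proposition is only ever applied with $i=1$.
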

\begin{proof}
Let $f(z)=H_{R/\mathfrak{m}^i}(z)$. Then $\deg(f(z))= i-1$. Now,
\[ H_{\mathfrak{m}^i}(z)= H_R(z) - H_{R/\mathfrak{m}^i}(z) = H_R(z)-f(z).\]
Since $R$ is Koszul, $\m$ has linear resoution. Hence, by Theorem \ref{Thm-reg-over-poly-ring}, $\m^i$ has 
 linear resolution.  Therefore,  we have $H_{\mathfrak{m}^i(i)}(z) = H_R(z) \mathcal{P}_{\mathfrak{m}^i(i)}(-z)$. Since $H_{\mathfrak{m}^i}(z)= z^i H_{\mathfrak{m}^i(i)}(z)$, we get
\begin{equation*}
\begin{split}
 \mathcal{P}_{\mathfrak{m}^i(i)}(-z) &= H_{\mathfrak{m}^i}(z) / (z^iH_R(z))\\
                                                 &= (H_R(z)-f(z))/( z^i H_R(z))\\
                                                 &= (1+nz-(1-z)^d f(z))/z^i (1+nz).
\end{split}
\end{equation*}  

 Note that from $\deg(f(z))=\deg(f(-z))=i-1$ we get
\[\mathcal{P}_{\mathfrak{m}^i(i)}(z) =(1/z^i)(1+nz+n^2z^2+\cdots)(a_0+a_1z+\cdots+a_{d+i-1}z^{d+i-1})\]
for some $a_j \in \BZ$. Now, from the following Remark \ref{rmk:series}, it follows that  for every $j \geq d-1$ the coefficient of $z^{j+1}$ on the right hand side of the above equation is $n$ times the coefficient of $z^{j}$, i.e., $\beta_{j+1}(R/\m^i)=n \beta_j(R/\m^i)$ for all $j \geq d$.
%
\end{proof}

\begin{rmk}\label{rmk:series}{\rm 
Let $f(z) = \sum_{j=\ell}^{m} a_j z^j$ for some $\ell \leq m \in \mathbb{Z}$, and $f(z)/(1-nz) = \sum_j b_j z^j$. Then for all $j \geq m$ we have $b_{j+1}=\sum_{k=\ell}^{m} a_k n^{j+1-k}= n \sum_{k=\ell}^{m} a_k n^{j-k} = n b_j$.
}\end{rmk}

The following result describes the Betti cone of maximal Cohen--Macaulay modules over Cohen--Macaulay algebras of minimal multiplicity.
\begin{prop}\label{MCM_Betti_Cone}
 Let $R$ be a Cohen--Macaulay standard graded $\sk$-algebra with $H_R(z)=\dfrac{1+nz}{(1-z)^d}$. Suppose that $\underline{x}$ is a maximal $R$-regular sequence of linear forms, and $\overline{R}= R/ \langle \underline{x} \rangle$. Let $M$ be a maximal Cohen--Macaulay $R$-module. Then 
\begin{enumerate}[(a)]
 \item $n\beta_{0,j}(M)\geq\beta_{1,j+1}(M)$, and $n \beta_{i,j}(M)=\beta_{i+1,j+1}(M)$ for all $i \geq 1$.
 
  In particular, if every minimal generator of $M$ is in degree $j$, then $M$ is linear, and $\beta^{\overline{R}}(\sk(-(j+1)))= c \beta^R(\Omega_1(M))$ for some $c \in \mathbb Q_{>0}$. 

 \item $\mathbb{B}_{\mathbb{Q}}^{\text{MCM}}(R)= \mathbb{B}_{\mathbb{Q}}(\overline{{R}})$.
 
\item The extremal rays of $\BB_{\mathbb{Q}}^{\text{MCM}}(R)$ are spanned by the Betti tables of shifts of $R$ and $\Omega_d(\sk)$. 
\end{enumerate} 
 \end{prop}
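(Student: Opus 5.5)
The plan is to reduce everything to the Artinian ring $\overline{R}$ with $H_{\overline R}(z)=1+nz$, using the standard fact that for an MCM module $M$ the sequence $\underline{x}$ is $M$-regular. Consequently the minimal $R$-free resolution of $M$ tensored with $\overline{R}$ is the minimal $\overline{R}$-free resolution of $\overline{M}=M/\langle\underline x\rangle M$. In particular $\beta^R_{i,j}(M)=\beta^{\overline R}_{i,j}(\overline M)$ for all $i,j$. Hence $\beta^R(M)=\beta^{\overline R}(\overline M)$ for every MCM $R$-module $M$, which already gives the inclusion $\mathbb B^{\rm MCM}_{\mathbb Q}(R)\subseteq \mathbb B_{\mathbb Q}(\overline R)$.

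For (a), work over $\overline R$, where $\overline{\m}^2=0$. Then $\Omega_1^{\overline R}(\overline M)\subseteq \overline{\m}F_0$ is annihilated by $\overline{\m}$, so it is a $\sk$-vector space, i.e.\ a direct sum of shifted copies of $\sk$. Since $\beta^{\overline R}_i(\sk)=n^i$ with linear resolution (Remark~\ref{rmk:min-mult-is-Koszul}), we get $\beta_{i+1,j+1}=n\beta_{i,j}$ for all $i\ge1$. The inequality $n\beta_{0,j}\ge\beta_{1,j+1}$ follows because the degree-$(j+1)$ part of $\Omega_1$ lies in $\overline{\m}\,(F_0)_{j}$-generated part, whose dimension is at most $n\beta_{0,j}$; alternatively one can invoke Lemma~\ref{syzygyLemma}. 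If $M$ is generated in a single degree $j$, then $\Omega_1$ is concentrated in degree $j+1$, so it is $\sk(-(j+1))^{c}$ and everything is linear. This transfers back to $R$ and gives $\beta^R(\Omega_1(M))$ proportional to $\beta^{\overline R}(\sk(-(j+1)))$.

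For (b) we need the reverse inclusion. By Theorem~\ref{dim0and1extremals}(b)(i), $\mathbb B_{\mathbb Q}(\overline R)$ is spanned by shifts of $\beta(\overline R)$ and $\beta(\sk)$. Now $\beta^{\overline R}(\overline R)=\beta^R(R)$, and $R$ is MCM. For $\sk$, take $N=\Omega^R_d(\sk)$, which is MCM by depth counting. The step I expect to be the main obstacle is computing $\beta^R(N)$ and showing that a suitable combination of MCM tables realizes $\beta^{\overline R}(\sk)$. Note that $\beta^{\overline R}(\sk)$ has $\beta_{0,0}=1$ and $\beta_{i,i}=n^i$. Since $R$ is Koszul, $N$ is generated in degree $d$ with linear resolution, so by (a) its table is a multiple of the shifted $\beta^{\overline R}(\sk(-d))$ in homological degrees $\ge1$, with column $0$ equal to $\beta_d(\sk)$. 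Indeed $\beta^R_{d+i}(\sk)=n^i\beta^R_d(\sk)$ follows from $\mathcal P^R_\sk(z)=(1+z)^d/(1-nz)$, which comes from $H_R(-z)$ and Koszulness.

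Thus $\beta^R(N)=\beta^R_d(\sk)\cdot\beta^{\overline R}(\sk(-d))$ exactly, so $\beta^{\overline R}(\sk)$ up to shift lies in $\mathbb B^{\rm MCM}_{\mathbb Q}(R)$. This proves (b). Part (c) follows immediately: extremal rays of equal cones coincide, and by Theorem~\ref{dim0and1extremals}(b)(i) they are the shifts of $\beta(\overline R)=\beta(R)$ and $\beta(\sk)\propto\beta(\Omega_d(\sk))$.
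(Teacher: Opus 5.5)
Your proof is correct and follows the paper's outline: transfer Betti tables via $\beta^R(M)=\beta^{\overline R}(M/\langle\underline x\rangle M)$, realize $\beta^{\overline R}(\sk)$ (up to shift and a positive scalar) as $\beta^R(\Omega_d^R(\sk))$, and read off (c) from Theorem~\ref{dim0and1extremals}(b)(i). The differences are local. For (a) you argue directly that $\Omega_1^{\overline R}(\overline M)$ is annihilated by $\overline{\m}$, whereas the paper decomposes $\beta^{\overline R}(\overline M)$ into shifts of $\beta(\overline R)$ and $\beta(\sk)$ using Theorem~\ref{dim0and1extremals}. You also obtain $\beta_{d+i}(\sk)=n^i\beta_d(\sk)$ from $\mathcal P^R_{\sk}(z)=(1+z)^d/(1-nz)$ rather than from Proposition~\ref{Prop-betti-nos-of-R/mi}. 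Your aside that Lemma~\ref{syzygyLemma} also gives $n\beta_{0,j}\ge\beta_{1,j+1}$ only covers the lowest generator degree $j$, so keep the direct argument.
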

 
 \begin{proof}
 
 Since $M$ is maximal Cohen--Macaulay, $\underline{x}$ is $M$-regular. Hence, $\beta^R(M)=\beta^{\overline{R}}(M/\langle\underline{x}\rangle M)$. In particular, this proves, $\mathbb{B}_{\mathbb{Q}}^{\text{MCM}}(R)\subset\mathbb{B}_{\mathbb{Q}}(\overline{{R}})$.

(a)  Using Theorem \ref{dim0and1extremals}, we have
$$ \beta^{R}(M) =\beta^{\overline{R}}(M/ \langle\underline{x}\rangle M)= \sum_{j\in \mathbb{Z}} c_{1j} \beta^{\overline{R}}(\overline{R}(-j)) + \sum_{j \in \mathbb{Z}}c_{2j} \beta^{\overline{R}}(\sk(-j))$$
for some $c_{1j}, c_{2j} \in \mathbb{Q}_{\geq 0}$. 
Since $H_{\overline{R}}(z)=1+nz$, $\sk$ has linear resolution over $\overline{R}$ with $\beta^{\overline{R}}_i(\sk)=n^i$. This proves (a) since $\beta_{i,j}^{\overline{R}}(\overline{R})= 0 $ for $i \geq 1$.

(b) We have already seen that $\mathbb{B}_{\mathbb{Q}}^{\text{MCM}}(R)\subset\mathbb{B}_{\mathbb{Q}}(\overline{{R}})$. 
To prove the other inclusion, we first note that since $H_{\overline{R}}(z)= 1+nz$, by Theorem \ref{dim0and1extremals}, the extremal rays of ${\overline{R}}$ are spanned by the Betti tables of shifts of $\sk$ and $\overline{R}$. Therefore, it suffices to show that there exists a maximal Cohen--Macaulay $R$-module $M$ such that $\beta ^{\overline{R}}(\sk) = c\beta^R(M)$  for some $c \in \mathbb{Q}_{>0}$. 

Since $\Omega_d^R(\sk)$ is a maximal Cohen--Macaulay $R$-module generated in degree $d$ with linear resolution, by Proposition \ref{Prop-betti-nos-of-R/mi} and (a) above, we get $\beta ^{\overline{R}}(\sk) = (1/ \beta_{d}(\sk))\beta^R(\Omega_{d}^R(\sk)(d))$. 

(c) Follows from the proof of (b) above.
\end{proof}

Recall that the \emph{curvature} of an $R$-module $M$ is defined as $\curv(M) =\limsup\limits_{j} \sqrt[j]{\beta_{j}^R(M)}$. It is known (e.g. see \cite[Proposition 4.2.4]{Av98}) that $\curv(\sk)\geq \curv(M)$ for all finitely generated $R$-modules. The modules $M$ for which the equality $\curv(M)=\curv(\sk)$ holds are said to have \emph{maximal curvature}.  As a consequence of Proposition~\ref{MCM_Betti_Cone} above, we obtain the following interesting result, which was also noted in \cite[Example 5.2.8]{Av98}.

\begin{cor}
    Let $R$ be a Cohen--Macaulay standard graded $\sk$-algebra with $H_R(z)=\dfrac{1+nz}{(1-z)^d}$. Then every $R$-module of infinite projective dimension has maximal curvature.
\end{cor}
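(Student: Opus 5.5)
The approach is to reduce an arbitrary module to a maximal Cohen--Macaulay syzygy and then read off the growth of Betti numbers from Proposition~\ref{MCM_Betti_Cone}. Let $M$ be an $R$-module of infinite projective dimension. Since $R$ is Cohen--Macaulay of dimension $d$, the syzygy $N=\Omega_d^R(M)$ is a maximal Cohen--Macaulay module (by the depth lemma), and it has infinite projective dimension, so in particular $N\neq 0$ and $N$ is not free. Moreover $\beta_j^R(M)=\beta_{j-d}^R(N)$ for $j\geq d$, so $\curv(M)=\curv(N)$. It therefore suffices to show that $\curv(N)=\curv(\sk)$ for every non-free maximal Cohen--Macaulay module $N$.

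By Proposition~\ref{MCM_Betti_Cone}(a), summing the graded equalities $n\beta_{i,j}(N)=\beta_{i+1,j+1}(N)$ over $j$ gives $\beta_{i+1}(N)=n\beta_i(N)$ for all $i\geq 1$. The key step is to check that $\beta_1(N)\neq 0$. This holds because $\beta_1(N)=0$ would force $N$ to be free. Hence $\beta_i(N)=n^{i-1}\beta_1(N)$ for $i\geq1$, and so $\curv(N)=n$.

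It remains to compute $\curv(\sk)$. Apply the same argument to $\sk$ itself. Since $\pdim(\sk)=\infty$ (as $R$ is not regular once $n\geq1$), $\Omega_d^R(\sk)$ is a non-free maximal Cohen--Macaulay module, and the previous paragraph gives $\curv(\sk)=n$ as well. Alternatively, one can read this off directly from Proposition~\ref{Prop-betti-nos-of-R/mi} with $i=1$, which gives $\beta_{j+1}(\sk)=n\beta_j(\sk)$ for $j\geq d$.

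Comparing the two computations yields $\curv(M)=n=\curv(\sk)$. The only delicate points are the degenerate cases. If $n=0$, then $R$ is a polynomial ring and there are no modules of infinite projective dimension, so the statement is vacuous. The other point is confirming that $\beta_1(N)>0$ for the $d$-th syzygy, which is immediate from the infinite projective dimension. I expect no step to be a real obstacle; the main care is in correctly invoking the depth lemma so that $\Omega_d^R(M)$ is indeed maximal Cohen--Macaulay.
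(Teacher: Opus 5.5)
Your proposal is correct and takes the same approach as the paper: pass to the maximal Cohen--Macaulay syzygy $\Omega_d^R(M)$ and use Proposition~\ref{MCM_Betti_Cone}(a) to get $\beta_{j+1}(M)=n\beta_j(M)$ for $j\gg 0$, so $\curv(M)=n=\curv(\sk)$. You also make explicit several points the paper's one-line proof leaves implicit: that $\beta_1(\Omega_d^R(M))\neq 0$, the computation of $\curv(\sk)$, and the vacuous case $n=0$.
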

\begin{proof}
    By Proposition \ref{MCM_Betti_Cone}, we see that given any module $M$ of infinite projective dimension, we have $\beta_{j+1}(M)=n \beta_{j}(M)$ for $j \gg 0$. Hence, $\curv(M)=\limsup\limits_{j} \sqrt[j]{\beta_{j}^R(M)} = n$. 
\end{proof}

\begin{rmk}{\rm
If $R$ is Cohen--Macaulay with $H_R(z)=\dfrac{1+nz}{(1-z)^d}$, and if $M$ is an $R$-module with $\depth(M)=d-c$, then $\Omega_{c}(M)$ is maximal Cohen--Macaulay.

\begin{enumerate}[(a)]
\item  By Proposition \ref{MCM_Betti_Cone}, it follows that $n \beta_{i,j}(\Omega_{c}(M))=\beta_{i+1,j+1}(\Omega_{c}(M))$ for all $i \geq 1$, and $n\beta_{0,j}(\Omega_{c}(M))\geq\beta_{1,j+1}(\Omega_{c}(M))$.

\item If $M$ is pure of type $(\delta_0, \delta_1,\delta_2, \ldots)$, then $\Omega_{c}(M)$ has a linear resolution. Moreover, by (a) above, it follows that $\mathcal{P}_{\Omega_{c+1}^R(M)}(z) = \dfrac{\beta_{c+1}^R(M) z^{\delta_{c+1}}}{1-nz}$.
\end{enumerate}
}\end{rmk}

Proposition~\ref{MCM_Betti_Cone} also leads to the following question. 
\begin{question}\label{Que:BettiRelAtCodim} Let $R$ be a standard graded $\sk$-algebra with $H_R(z)=(1+nz)/(1-z)^d$, and $M$ be a pure Cohen--Macaulay module of infinite projective dimension with $\codim(M)=c$,  such that $\beta^R(M)$ spans an extremal ray in $\BB_{\mathbb{Q}}(R)$. Then is $\beta_{c+1}(M)= n \beta_{c}(M)$?
\end{question}

Note that if $R$ is Cohen--Macaulay of dimension $d$, then by \cite[Theorem 3.9]{AK18} or \cite{IMW23}, we know that for every degree sequence ${{\bf d}}= (\delta_0,\ldots,\delta_p,\infty, \infty,\ldots)$ with $p \leq d$, there exists a pure Cohen--Macaulay module of type ${\bf d}$. Moreover, by Theorem \ref{SomeExtremalRays}, for such a module $M$, $\beta(M)$ is extremal in $\BB_{\mathbb{Q}}(R)$. This discussion, along with Proposition \ref{MCM_Betti_Cone}, gives us the following.

\begin{prop} Let $R$ be a Cohen--Macaulay standard graded $\sk$-algebra with $H_R(z)=\dfrac{1+nz}{(1-z)^d}$.  If $M$ is a pure $R$-module, then the degree sequence of $M$ has the form $(\delta_0,\ldots,\delta_p,\infty,\infty,\ldots)$ for some $p\leq d$, or $(\delta_0,\ldots,\delta_d, \delta_{d}+1, \delta_d+2, \delta_d+3, \ldots)$. 

Moreover, for every degree sequence ${\bf d}=(\delta_0,\ldots,\delta_p,\infty,\infty,\ldots)$ with $p\leq d$ there exists a pure Cohen--Macaulay $R$-module of type ${\bf{d}}$.
\end{prop}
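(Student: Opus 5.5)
The plan splits the statement into its two parts and handles them separately. The first part follows from the behaviour of syzygies of pure modules. The second part follows from results already in the paper.

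\emph{Finite projective dimension.} Let $M$ be pure of type ${\bf d}$. If $\pdim_R(M)=p<\infty$, then the Auslander--Buchsbaum formula over the Cohen--Macaulay ring $R$ gives $p=d-\depth(M)\le d$. So the degree sequence is $(\delta_0,\ldots,\delta_p,\infty,\infty,\ldots)$ with $p\le d$.

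\emph{Infinite projective dimension.} Now suppose $\pdim_R(M)=\infty$. Since $\depth(M)\ge 0$, the syzygy $N=\Omega_d^R(M)$ is maximal Cohen--Macaulay, as in the remark preceding Question~\ref{Que:BettiRelAtCodim} with $c=d-\depth M\le d$; further syzygies of maximal Cohen--Macaulay modules stay maximal Cohen--Macaulay. The module $N$ is pure, generated in the single degree $\delta_d$, and it is nonzero because $\pdim M=\infty$.
\begin{enumerate}[(1)]
\item By the ``in particular'' clause of Proposition~\ref{MCM_Betti_Cone}(a), $N$ has a linear resolution.
\item Hence $\beta^R_{d+i,j}(M)=\beta^R_{i,j}(N)\neq0$ forces $j=\delta_d+i$.
\item Each $\beta_{d+i}(M)\neq 0$ since $\pdim M=\infty$. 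Concretely, $\beta_{i+1,\delta_d+i+1}(N)=n\beta_{i,\delta_d+i}(N)$ for $i\ge1$ and $n\ge1$, and $\beta_1(N)\neq0$ because $N$ is not free.
\end{enumerate}
Therefore $\delta_{d+i}=\delta_d+i$ for all $i\ge 0$, which is the second form in the statement. The only point needing care is justifying that $\Omega_d$ is maximal Cohen--Macaulay and nonfree; this comes from the depth lemma and $\pdim M=\infty$.

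\emph{Existence.} For the existence claim, I would invoke \cite[Theorem 3.9]{AK18} (equivalently \cite{IMW23}), as recalled in the discussion just before the statement. Since $R$ is Cohen--Macaulay of dimension $d$, this gives, for every strictly increasing $(\delta_0,\ldots,\delta_p)$ with $p\le d$, a pure Cohen--Macaulay module of that type with $\codim=\pdim=p$. Theorem~\ref{thm:Herzog-Kuhl-in-general} then fixes its Betti numbers. No new argument is needed beyond citing this.

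I expect no serious obstacle. The one subtle step is applying Proposition~\ref{MCM_Betti_Cone}(a) to a shifted $\Omega_d(M)$ generated in a single degree, and making sure purity is inherited by the syzygy.
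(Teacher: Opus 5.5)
Your proposal is correct and follows essentially the paper's own route: Auslander--Buchsbaum bounds $p\le d$ in the finite case, the maximal Cohen--Macaulay syzygy $\Omega_d^R(M)$ together with the linearity clause of Proposition~\ref{MCM_Betti_Cone}(a) handles the infinite case, and \cite{AK18}/\cite{IMW23} supplies existence. The remark preceding Question~\ref{Que:BettiRelAtCodim} already gives linearity from $\Omega_c(M)$ with $c=d-\depth M$, which is slightly sharper than what the statement needs.
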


\begin{prop}\label{extremalImpliesbeta1=nbeta0}
 Let $R$ be a Cohen--Macaulay  standard graded $\sk$-algebra with $H_R(z)=\dfrac{1+nz}{(1-z)^d}$, where $n>0$. Let $M$ be a maximal Cohen--Macaulay module generated in a single degree. If $\beta^R(M)$ spans an extremal ray of $\mathbb{B}_{\mathbb{Q}}(R)$, then either $M$ is free or $\beta^R(M)= c \beta^R(\Omega)$  for some shift $\Omega$ of $\Omega_d^R(\sk)$ and $c \in \BQ_{>0}$.
 
\end{prop}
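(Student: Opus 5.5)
By shifting, we may assume $M$ is generated in degree $0$. The plan is to show that $\beta^R(M)$ is a nonnegative combination of $\beta^R(R)$ and $\beta^R(\Omega_d^R(\sk)(d))$. Extremality then forces $\beta^R(M)$ to be proportional to one of these two tables.

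We start from Proposition~\ref{MCM_Betti_Cone}(a). Since $M$ is maximal Cohen--Macaulay and generated in degree $0$, $M$ has a linear resolution, with $\beta_{1,1}(M)\le n\beta_{0,0}(M)$ and $\beta_{i+1,i+1}(M)=n\beta_{i,i}(M)$ for $i\ge 1$. Setting $b_0=\beta_{0,0}(M)$ and $b_1=\beta_{1,1}(M)$, the whole table is determined:
\[
\beta_{i,i}(M)=n^{i-1}b_1 \quad\text{for } i\ge 1 .
\]
For the second generator, Proposition~\ref{MCM_Betti_Cone}(b) and its proof give $\beta^R(\Omega_d^R(\sk)(d))=\beta_d(\sk)\,\beta^{\overline R}(\sk)$. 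Since $H_{\overline R}(z)=1+nz$, the table $\beta^{\overline R}(\sk)$ is linear with entries $\beta_{i,i}=n^i$. Hence, with $T:=\beta^{\overline R}(\sk)$ (a positive multiple of the Betti table of a shift-free copy of $\Omega_d^R(\sk)(d)$, which is an MCM module), we can write
\[
\beta^R(M)=\Big(b_0-\tfrac{b_1}{n}\Big)\beta^R(R)+\tfrac{b_1}{n}\,T .
\]
Both coefficients are nonnegative because $b_1\le nb_0$, and $n>0$ is used to divide. This identity is checked entrywise: column $0$ gives $b_0-b_1/n+b_1/n=b_0$, and column $i\ge1$ gives $(b_1/n)n^i=n^{i-1}b_1$.

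Since $\beta^R(M)$ spans an extremal ray and $T$ is a positive rational multiple of $\beta^R(\Omega_d^R(\sk)(d))$, the definition of extremality applies to this decomposition. If $b_1=0$, then $M$ has no first syzygies, so $M$ is free. Otherwise $b_1>0$, and extremality gives two possibilities. In the first, $\beta^R(M)$ is proportional to $\beta^R(R)$, which is impossible since $\beta_{1,1}(M)=b_1\neq0$. In the second, $\beta^R(M)=c\,\beta^R(\Omega_d^R(\sk)(d))$ with $c>0$; this case covers $b_0=b_1/n$, and the extremality definition handles the subcase where only one coefficient is nonzero. Undoing the initial shift yields the claimed shift $\Omega$ of $\Omega_d^R(\sk)$.

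The main point needing care is the exact identification of $\beta^R(\Omega_d^R(\sk))$ as a multiple of the linear table $(1,n,n^2,\dots)$. This is where Proposition~\ref{Prop-betti-nos-of-R/mi} (with $i=1$) and part (a) of Proposition~\ref{MCM_Betti_Cone} enter, through $\beta_{j+1}(\sk)=n\beta_j(\sk)$ for $j\ge d$. Everything else is bookkeeping.
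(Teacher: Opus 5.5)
Your proposal is correct and follows essentially the same route as the paper. Both write $\beta^R(M)=\bigl(\beta_0(M)-\tfrac{\beta_1(M)}{n}\bigr)\beta^R(R)+\tfrac{\beta_1(M)}{n\,\beta_d(\sk)}\beta^R(\Omega_d^R(\sk)(d))$ with nonnegative coefficients, then use extremality and $\beta_1(M)\neq 0$ to rule out proportionality to $\beta^R(R)$. The only difference is that you verify this decomposition entrywise from Proposition~\ref{MCM_Betti_Cone}(a) and Proposition~\ref{Prop-betti-nos-of-R/mi}, whereas the paper cites Proposition~\ref{MCM_Betti_Cone}(c) directly.
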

\begin{proof}
We may assume that $M$ is non-free, and generated in degree zero.
Let $\Omega$ be the shift of $\Omega_{d}(\sk)$ generated in degree zero. Note that by Proposition \ref{Prop-betti-nos-of-R/mi}, we have $\beta_1(\Omega)=n \beta_0(\Omega)$.

Using Proposition \ref{MCM_Betti_Cone}(c), we get that 
$$  \beta^R(M)= \dfrac{\beta_1(M)}{n \beta_{d}(\sk)}\beta^R(\Omega)+ \left(\beta_0(M)-\dfrac{\beta_1(M)}{n}\right)\beta^R(R).$$

Since $\beta^R(M)$ is extremal, and $n, \beta_1(M)$ and $\beta_{d}(\sk)$ are all nonzero, we get $n\beta_0(M)-\beta_1(M)=0$, i.e., 
$ \beta^R(M) = (\beta_1(M)/n \beta_{d}(\sk))\beta^R(\Omega)$.
\end{proof}

\begin{rmk}{\rm
The converse of the above proposition is false. For instance, consider the ring $R= \sk[X,Y]/\langle XY \rangle$, and $M=\langle X \rangle$. Then $H_R(z)= (1+z)/(1-z)$, and $R$ is Cohen--Macaulay. Also, $M$ is a maximal Cohen--Macaulay module $R$-module, and we have $\beta^R(M) = \dfrac{1}{2}\beta^R(\Omega_1(\sk))$. 
However, $\beta^R(M)$ does not span an extremal ray of $\mathbb{B}_{\mathbb{Q}}(R)$ since 
$$\beta^R(M) = \dfrac{1}{2}\beta^R(R) + \dfrac{1}{2}\beta^R(\sk).$$

However, by Proposition \ref{MCM_Betti_Cone}(c), we see that the converse holds if we consider the cone $\mathbb{B}^{\text{MCM}}_{\mathbb{Q}}(R)$ spanned by the Betti tables of maximal Cohen--Macaulay $R$-modules.
}\end{rmk}

As observed in Remark \ref{rmk:min-mult-is-Koszul}, the Koszul property is a necessary condition for a Cohen--Macaulay algebra to have minimal multiplicity. Recall that Proposition \ref{MCM_Betti_Cone}(c) states that if $R$ is Cohen--Macaulay of minimal multiplicity, then the extremal rays of $\mathbb{B}^{\mathrm{MCM}}_{\mathbb{Q}}(R)$ are spanned by the Betti tables of shifts of $R$ and $\Omega_d^R(\mathsf{k})$. We end this section by showing that this property indeed characterizes Cohen--Macaulay algebras of minimal multiplicity.
\begin{thm}\label{thm:min-mult-char}
    Let $R$ be a standard graded Cohen--Macaulay  Koszul $\sk$-algebra of dimension $d$. Then the following are equivalent: 
    \begin{enumerate}
        \item[{\rm (i)}] $R$ has minimal multiplicity.
        \item[{\rm (ii)}] The extremal rays of $\mathbb{B}^{\mathrm{MCM}}_{\mathbb{Q}}(R)$ are spanned by the Betti tables of shifts of $R$ and $\Omega_d^R(\mathsf{k})$.
    \end{enumerate}
\end{thm}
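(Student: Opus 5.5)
(i)$\Rightarrow$(ii) is Proposition~\ref{MCM_Betti_Cone}(c): for a Cohen--Macaulay algebra, minimal multiplicity means exactly $H_R(z)=(1+nz)/(1-z)^d$ with $n=\edim(R)-d$. So all the work is in (ii)$\Rightarrow$(i).

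For (ii)$\Rightarrow$(i), choose a maximal linear $R$-regular sequence $\underline x$ and set $\overline R=R/\langle\underline x\rangle$. For any maximal Cohen--Macaulay module $M$, $\underline x$ is $M$-regular, so $\beta^R(M)=\beta^{\overline R}(M/\underline xM)$. Write $H_{\overline R}(z)=1+nz+h_2z^2+\cdots+h_sz^s$. Minimal multiplicity is equivalent to $\m_{\overline R}^2=0$, i.e.\ $s\le 1$. So assume $s\ge 2$ and aim for a contradiction.

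First compute the rays. Since $R$ is Koszul, so is $\overline R$ (Remark~\ref{Rmk-transfer-of-Koszulness}), and $\Omega_d^R(\sk)$ has a linear resolution. Its Betti table coincides with that of $\Omega_d^R(\sk)/\underline x\Omega_d^R(\sk)$ over $\overline R$, and this module is a direct sum of shifts of $\Omega_i^{\overline R}(\sk)$ (the standard change-of-rings splitting for a linear regular sequence). Hence, up to a constant and a shift, the ray of $\Omega_d^R(\sk)$ has Poincar\'e series $1/H_{\overline R}(-z)$. Under (ii), every MCM module generated in a single degree $0$ then has $\beta^R(M)=a\,\beta^R(R)+b\,\beta^R(\Omega)$ with $a,b\ge0$. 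Consequently $\mathcal P_M(z)-\beta_0(M)$ is a nonnegative multiple of $z/H_{\overline R}(-z)$ modulo constants. Moreover, the ratio between $\beta_1$ and the higher Betti numbers is rigid, namely that of $\Omega_{d+1}^R(\sk)$.

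The main step is to produce an MCM module violating this rigidity. My first choice is $M=\Omega_d^R(\overline R/\m^2_{\overline R})$, or more simply $\Omega^R_d(R/(\langle\underline x\rangle+\m^2))$. This module is MCM and has a linear resolution: regularity is finite by Theorem~\ref{Thm-reg-over-poly-ring}, and the argument of Proposition~\ref{Prop-betti-nos-of-R/mi} applies. Its Poincar\'e series is then $H_{M}(-z)/H_R(-z)$. Reducing mod $\underline x$, it becomes a positive combination of shifts of syzygies of $\overline R/\overline\m^2$ over $\overline R$. Because $h_2\neq0$, $\overline R/\overline\m^2$ is not a multiple of $\sk$ in the Betti cone of $\overline R$. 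Concretely, its first syzygy $\overline\m^2$ has Poincar\'e series $(1-(1+nz)H_{\overline R}(-z))/(z^2H_{\overline R}(-z))$ up to sign. This is not of the form $c/H_{\overline R}(-z)$ plus a polynomial unless $1-(1+nz)H_{\overline R}(-z)$ is a monomial multiple of a constant, which fails when $s\ge2$ (it has nonzero terms $h_2z^2$ and beyond). Comparing Poincar\'e series with the decomposition forced by (ii) then gives the contradiction.

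Alternatively, and perhaps more cleanly, one can mimic Proposition~\ref{extremalImpliesbeta1=nbeta0} and the proof of Theorem~\ref{thm:dim-at-most-1-for-smallest-set-of-extremals}. Compute $\beta_{1}$ and $\beta_{2}$ of a suitable linear MCM module. Determine the coefficient of the $\Omega$-ray from a high Betti number, and show that subtracting it forces a negative entry. I expect this comparison of explicit Betti numbers, showing the rational function is not a multiple of the ray's series, to be the main obstacle. The key input is that $h_2>0$ when $s\ge2$, by standard gradedness.
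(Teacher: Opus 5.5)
Your (i)$\Rightarrow$(ii) is fine and matches the paper. The direction (ii)$\Rightarrow$(i), however, has a genuine gap. The contradiction is only asserted (``comparing Poincar\'e series \dots\ then gives the contradiction''), and the ingredients you prepare for that comparison are incorrect.

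First, the ray of $\Omega_d^R(\sk)$ does not have Poincar\'e series $c/H_{\overline R}(-z)$ modulo polynomials. Since $R$ is Koszul, $\mathcal P^R_{\sk}(z)=1/H_R(-z)=(1+z)^d/H_{\overline R}(-z)$. So $\mathcal P^R_{\Omega^R_d(\sk)}(z)$ is $z^{-d}(1+z)^d/H_{\overline R}(-z)$ minus a Laurent polynomial. The splitting you invoke yields $\bigoplus_i\Omega_i^{\overline R}(\sk)^{\binom{d}{i}}$ (suitably shifted), not a single syzygy of $\sk$ over $\overline R$. For $d=1$, say, the series is $q(z)/H_{\overline R}(-z)$ with $q(z)=\bigl(1+z-H_{\overline R}(-z)\bigr)/z$ nonconstant of degree $s-1$. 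Hence it is not $c/H_{\overline R}(-z)$ plus a polynomial when $s\geq 2$.

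Second, your test module $\Omega_d^R\bigl(R/(\langle\underline x\rangle+\m^2)\bigr)$ does not have a linear resolution. For $d\geq1$ the Koszul strand on $\underline x$ gives $\beta^R_{d,d}\bigl(R/(\langle\underline x\rangle+\m^2)\bigr)=1$. So the module has a minimal generator in degree $d$ besides those in degree $d+1$. Neither the formula $H_M(-z)/H_R(-z)$ nor your single-degree rigidity applies to it.

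Third, the asserted non-proportionality for $\overline{\m}^{2}$ fails at $s=2$. Then $\overline{\m}^{3}=0$, so $\overline{\m}^{2}\cong\sk(-2)^{h_2}$ has Poincar\'e series exactly $h_2/H_{\overline R}(-z)$. In general, $\mathcal P_{\overline{\m}^2}(z)=\bigl(H_{\overline R}(-z)-1+nz\bigr)/\bigl(z^2H_{\overline R}(-z)\bigr)$, not your formula. Indeed, for $s=2$ the linear module $\Omega^R_{d+1}\bigl(R/(\langle\underline x\rangle+\m^2)\bigr)$ has Betti table exactly $h_2\,\beta^R\bigl(\Omega^R_d(\sk)(-2)\bigr)$, so it gives no contradiction at all. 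The alternative in your last paragraph is only a plan, and you yourself flag its key step as the main obstacle.

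The missing idea is already in your own remark. Under (ii), the sequence $\beta_1(M),\beta_2(M),\dots$ of a single-degree MCM module is proportional to $\beta_1(\Omega^R_d(\sk)),\beta_2(\Omega^R_d(\sk)),\dots$. Apply this to $M=\Omega^R_{d+1}(\sk)$ itself, which is maximal Cohen--Macaulay and generated in the single degree $d+1$.

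Then $\beta^R(\Omega^R_{d+1}(\sk))=c_1\beta^R(R(-d-1))+c_2\beta^R(\Omega^R_d(\sk)(-1))$. This gives $\beta^R_{m+1}(\sk)=c_2\beta^R_m(\sk)$ for all $m\geq d+1$, so $(1-c_2z)\mathcal P^R_{\sk}(z)$ is a polynomial. With $h_R(z)=(1-z)^dH_R(z)$ one has $\mathcal P^R_{\sk}(z)=(1+z)^d/h_R(-z)$. Since $h_R(1)=e(R)\neq0$, $h_R(-z)$ is coprime to $(1+z)^d$. Hence $h_R(-z)\mid(1-c_2z)$, so $\deg h_R\leq1$, which is minimal multiplicity.

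This is exactly the paper's argument, which runs the same comparison through Hilbert series and obtains $h_R(z)\mid(1+c_2z)$. No test module, syzygy splitting, or analysis of $\overline{\m}^{2}$ is needed.
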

\begin{proof}
The implication (i) $\Longrightarrow$ (ii) is the content of Proposition~\ref{MCM_Betti_Cone}(c). We now prove (ii) $\Longrightarrow$ (i). So, assume that (ii) holds.

Let $\mathbb F_\bullet$ denote the minimal graded $R$-free resolution of $k$, with $\beta_i=\beta_i^R(\mathsf k)$. Since $R$ is Koszul, $F_i=R(-i)^{\beta_i}$ for every $i\geq 0$.  From the exact sequence $
0\to\Omega^R_d(\mathsf k)\to F_{d-1}\to\cdots\to F_0\to \mathsf k\to 0$, we obtain the following relation between Hilbert series:
\[
H_{\Omega^R_d(\mathsf k)}(z)=\left(\sum_{i=0}^{d-1}(-1)^{d-1-i}\beta_i\, z^i\right) H_R(z)+(-1)^d.
\]
For any maximal Cohen--Macaulay module $M$, let $h_M(z)$ denote the polynomial $ (1-z)^d H_M(z)$.  Then, the above equation gives
\begin{equation}
h_{\Omega^R_d(\mathsf k)}(z)=\left(\sum_{i=0}^{d-1}(-1)^{d-1-i}\beta_i\, z^i\right) h_R(z)+(-1)^d(1-z)^d.
\end{equation}

Since $\Omega_{d+1}^R(\sk)$ is maximal Cohen--Macaulay and generated in degree $d+1$, by hypothesis (ii), there exist $c_1, c_2 \in \mathbb Q_{\geq 0}$ such that
\begin{equation*}
\beta^R\bigl({\Omega_{d+1}^R(\mathsf k)}\bigr) = c_1\,\beta^R\bigl(R(-(d+1))\bigr) + c_2\,\beta^R\bigl(\Omega_d^R(\mathsf k)(-1)\bigr)
\end{equation*}
for some $c_1,c_2\in\mathbb Q_{\ge0}$. 
Hence, we have the relation 
$H_{\Omega^R_{d+1}(\mathsf k)}(z)=c_1\, z^{d+1}\, H_R(z)+c_2\, z\, H_{\Omega^R_d(\mathsf k)}(z)$.
Also, from $0\to\Omega^R_{d+1}(\mathsf k)\to F_d\to\Omega^R_d(\mathsf k)\to 0$, we get the relation
$H_{\Omega^R_{d+1}(\mathsf k)}(z)=\beta_d\, z^d\, H_R(z) - H_{\Omega^R_d(\mathsf k)}(z)$.
Equating the two expressions of $H_{\Omega_{d+1}^R(\mathsf k)}(z)$, and multiplying by $(1-z)^d$, we obtain  $$\bigl(\beta_d z^d - c_1 z^{d+1}\bigr)h_R(z) = (1+c_2 z)\,h_{\Omega^R_d(\mathsf k)}(z).$$
Substituting for $h_{\Omega^R_d(\mathsf k)}(z)$ using equation (1), we get
\[
\bigl(\beta_d z^d - c_1 z^{d+1}\bigr)h_R(z)
= (1+c_2 z)\left[\left(\sum_{i=0}^{d-1}(-1)^{d-1-i}\beta_i\, z^i\right) h_R(z)+(-1)^d(1-z)^d\right],
\]
which rearranges to
\begin{equation*}
h_R(z)\left[\beta_d z^d - c_1 z^{d+1} - (1+c_2 z)\left(\sum_{i=0}^{d-1}(-1)^{d-1-i}\beta_i z^i\right)\right]
= (-1)^d(1+c_2 z)(1-z)^d.
\end{equation*}
Note that the right-hand side of the above equation is nonzero. Furthermore, $h_R(1)=e(R)\neq 0$. This forces $h_R(z)\mid (1+c_2z)$, and hence $\deg\bigl(h_R(z)\bigr)\le 1$. Let $h_R(z)=1+nz$. Then we have $H_R(z)=(1+nz)/(1-z)^d$. Since $R$ is standard graded, it follows that $n=\edim(R)-d$. This shows that $R$ has minimal multiplicity.
\end{proof}

\section{Concluding Remarks}\label{sec:concluding-remarks}

The main question we have addressed in this article is the following.

\begin{question}\label{que-in-concluding}
    Let $R$ be a standard graded $\mathsf{k}$-algebra. When does the equality 
    \[
    \mathbb{B}_{\mathbb{Q}}(R) = \mathbb{B}_{\mathbb{Q}}^{\mathrm{pure}}(R)
    \]
    hold?
\end{question}

Suppose that $\edim(R) \geq 2$ and that the equality $\mathbb{B}_{\mathbb{Q}}(R) = \mathbb{B}_{\mathbb{Q}}^{\mathrm{pure}}(R)$ holds. Then we have shown that:
\begin{enumerate}[{\rm (i)}]
    \item $R$ is a Koszul algebra;
    \item $R$ is \emph{good} in the sense of Roos;
    \item if $\depth(R) = 0$, then $R$ is Artinian and level.
\end{enumerate}

From \cite{AK20}, it is known that if $\mathbb{B}_{\mathbb{Q}}(R) = \mathbb{B}_{\mathbb{Q}}^{\mathrm{pure}}(R)$, then the Betti tables of shifts of the following modules are extremal:
\begin{enumerate}[{\rm (a)}]
    \item $R/\m^j$, for $j \geq 0$;
    \item modules $M$ with a pure resolution satisfying $\codim(M) = \pdim_R(M)$.
\end{enumerate}
Our results characterize all standard graded $\mathsf{k}$-algebras $R$ for which these modules form a complete set of extremal rays of $\mathbb{B}_{\mathbb{Q}}(R)$: they are either polynomial rings or Cohen--Macaulay algebras of dimension at most one with minimal multiplicity. We have also obtained a characterization of Cohen--Macaulay algebras of minimal multiplicity in terms of the extremal rays of the Betti cone of its maximal Cohen--Macaulay modules.

Gorenstein Artin algebras of embedding dimension $2$ with $\m^3 = 0$ are known to satisfy 
$
\mathbb{B}_{\mathbb{Q}}(R) = \mathbb{B}_{\mathbb{Q}}^{\mathrm{pure}}(R).
$
We have shown that this phenomenon does not extend to Gorenstein algebras of higher socle degree: generic Gorenstein Artin algebras of socle degree at least $3$ fail to satisfy the equality.

Despite the necessary conditions and results obtained, a complete answer to Question~\ref{que-in-concluding} remains elusive. We propose the following conjecture:

\begin{conjecture}
   Let $R$ be a standard graded $\sk$-algebra such that $\BB_{\mathbb{Q}}(R)=\BB^{\rm{pure}}_{\mathbb{Q}}(R)$. Then $R$ is a Cohen--Macaulay level algebra.
\end{conjecture}


\providecommand{\bysame}{\leavevmode\hbox to3em{\hrulefill}\thinspace}
\providecommand{\MR}{\relax\ifhmode\unskip\space\fi MR }
\providecommand{\MRhref}[2]{%
  \href{http://www.ams.org/mathscinet-getitem?mr=#1}{#2}
}
\providecommand{\href}[2]{#2}

\end{document}